\documentclass{article}
\usepackage[utf8]{inputenc}
\usepackage{amsmath}
\usepackage{amssymb}
\usepackage{graphicx}
\usepackage{csvsimple}
\usepackage{amsthm}
\usepackage{tikz}
\usepackage{comment}
\usepackage{todonotes}
\usepackage{yfonts}
\usepackage{enumitem}
\usepackage{tikz}
\usepackage{url}
\usepackage{hyperref}

\usepackage{commath}

\newcommand{\pizero}{c_0}
\newcommand{\pione}{c_1}

\title{PDMP Monte Carlo methods for piecewise-smooth densities}
\author{Augustin Chevallier, Sam Power, Andi Wang, Paul Fearnhead}
\date{November 2021}

\usepackage[numbers]{natbib}
\usepackage{graphicx}

\newtheorem{proposition}{Proposition}
\newtheorem{lemma}{Lemma}
\newtheorem{theorem}{Theorem}
\newtheorem{remark}{Remark}
\newtheorem{assumption}{Assumption}

\newcommand{\Dom}{\mathcal D}
\newcommand{\R}{\mathbb R}
\renewcommand{\P}{\mathbb P}

\newcommand{\BPS}{\mathrm{BPS}}
\newcommand{\CS}{\mathrm{CS}}
\newcommand{\ZZ}{\mathrm{ZZ}}

\begin{document}

\maketitle 
{\bf Abstract:}
There has been substantial interest in developing Markov chain Monte Carlo algorithms based on piecewise-deterministic Markov processes. However existing algorithms can only be used if the target distribution of interest is differentiable everywhere. The key to adapting these algorithms so that they can sample from to densities with discontinuities is defining appropriate dynamics for the process when it hits a discontinuity. We present a simple condition for the transition of the process at a discontinuity which can be used to extend any existing sampler for smooth densities, and give specific choices for this transition which work with popular algorithms such as the Bouncy Particle Sampler, the Coordinate Sampler and the Zig-Zag Process. Our theoretical results extend and make rigorous arguments that have been presented previously, for instance constructing samplers for continuous densities restricted to a bounded domain, and we present a version of the Zig-Zag Process that can work in such a scenario. Our novel approach to deriving the invariant distribution of a piecewise-deterministic Markov process with boundaries may be of independent interest.

{\bf Keywords:} Bayesian inference; Bouncy particle sampler; Hamiltonian Monte Carlo; Markov chain Monte Carlo; Non-reversible samplers; Zig-Zag Process.

\section{Introduction}

In recent years there has been substantial interest in using continuous-time \textit{piecewise-deterministic Markov processes} (PDMPs), as the basis for Markov chain Monte Carlo (MCMC) algorithms. These ideas started in the statistical physics literature \cite[]{peters2012rejection}, and have led to a number of new MCMC algorithms such as the Bouncy Particle Sampler (BPS) \cite[]{Bouchard-Cote2018}, the Zig-Zag (ZZ) Process \cite[]{Bierkens2019} and the Coordinate Sampler (CS) \cite[]{wu2020coordinate}, amongst many others. See \cite{fearnhead2018piecewise} and \cite{vanetti2017piecewise} for an introduction to the area. One potential benefit which is associated with these samplers are that they are non-reversible, and it is known that non-reversible samplers can mix faster than their reversible counterparts \cite[]{diaconis2000analysis,bierkens2016non}.

Informally, a PDMP process evolves according to a deterministic flow -- defined via an ordinary differential equation -- for a random amount of time, before exhibiting an instantaneous transition, and then following a (possible different) deterministic flow for another random amount of time, and so on.

Initial PDMP samplers were defined to sample target distributions which were continuously differentiable ($C^1$) on $\mathbb{R}^d$, but there is interest in extending them to more general target distributions. To date, this has been achieved for sampling from distributions defined on the union of spaces of different dimensions \cite[]{chevallier2020reversible,Bierkens2021} and to sample from distributions on restricted domains \cite[]{BIERKENS2018148} and phylogenetic trees \cite{koskela2020zigzag}. Here we consider a further extension to sampling from target distributions on $\R^d$ which are piecewise-$C^1$. That is, they can be defined by partitioning $\mathbb{R}^d$ into a countable number of regions, with the target density $C^1$ on each region. We call such densities {\em piecewise-smooth}. Such target distributions arise in a range of statistical problems, such as latent threshold models \cite[]{nakajima2013bayesian}, binary classification \cite[]{Nishimura_2020} and changepoint models \cite[]{raftery1986bayesian}. The importance of this extension of PDMP samplers is also indicated by the usefulness of extensions of Hamiltonian Monte Carlo (HMC) to similar problems \cite{Pakman2014,NIPS2015_8303a79b,dinh2017probabilistic,zhou2020mixed}.

The challenge with extending PDMP samplers to piecewise-smooth densities is the need to specify the appropriate dynamics when the sampler hits a discontinuity in the density. Essentially, we need to specify the dynamics so that the PDMP has the distribution from which we wish to sample as its invariant distribution. Current samplers are justified based on considering the infinitesimal generator of the PDMP. Informally, the generator is an operator that acts on functions and describes how the expectation of that function of the state of the PDMP changes over time. The idea is that if we average the generator applied to a function of the current state of the PDMP and this is zero for a large set of functions, then the distribution that we average over must be the invariant distribution. Whilst intuitively this makes sense, many papers use this intuitive reasoning without giving a formal proof that the distribution they average over is in fact the invariant distribution \cite[see e.g.][]{vanetti2017piecewise,fearnhead2018piecewise}, though see \cite{Durmus2021} for an exception. Furthermore, once we introduce discontinuities, then this complicates the definition of the generator. The impact of these discontinuities is realised in terms of the set of functions for which the generator is defined, and this necessitates the use of additional arguments which take account of the impact of the discontinuity when applying arguments based on integration by parts.

More specifically, we can see the challenge with dealing with discontinuities and some of the contributions of this paper by comparing with the related work of \cite{BIERKENS2018148}, who consider designing PDMP samplers when the target distribution is only compactly supported on $\R^d$ --  a special case of our present work. They give conditions on the dynamics of a PDMP at the discontinuity defined by the boundary of the support of the target distibrution that ensure the expected value of the generator applied to suitable functions is zero. However they point out that they do not formally prove that the resulting PDMP has the correct invariant distribution. Furthermore, as we show below, they have an additional and unnecessary condition on the dynamics at the discontinuity. The most natural dynamics for the Zig-Zag Process at a discontinuity satisfies our condition, but not this extra condition required by the argument in \cite{BIERKENS2018148}.
We also note at the outset the parallel and independent contribution in \cite{koskela2020zigzag}; see Theorem~1 therein. We will discuss the connections further in our concluding discussion.

The paper is structured as follows. In the next section we give a brief introduction to PDMPs and some common PDMP samplers. Then in Section \ref{sec:invariant_general} we give general conditions for the invariant distribution of a PDMP. The result in this section formalises the informal argument used by previous authors. We then develop these results for the specific cases where we the PDMP has active boundaries -- for example, due to a compact support, or when we wish to sample from a mixture of densities defined on spaces of differing dimensions. The results in this section can be used to formalise the arguments for the algorithm of \cite[]{BIERKENS2018148} for sampling on a bounded domain, and have been used to justify the reversible jump PDMP algorithm of \cite[]{chevallier2020reversible}. In Section \ref{sec:ctsbyparts} we use our results to provide a simple sufficient condition on the dynamics for a PDMP to admit a specific piecewise-smooth density as its invariant density. Various proofs and technical assumptions for these results are deferred to the appendices. We then show how to construct dynamics at the discontinuities for a range of common PDMP samplers, and empirically compare these samplers on some toy examples -- with a view to gaining intuition as to their relative merits, particularly when we wish to sample from high-dimensional piecewise-smooth densities. The paper ends with a discussion.

\section{PDMP basic properties}

\subsection{General PDMP construction}
\label{sec:Davis-construction}
For this work, we require a general construction of \textit{piecewise-deterministic Markov processes} (PDMPs) in spaces featuring boundaries. We will follow the construction of Davis in \cite[p57]{Davis1993}, and largely make use of the notation therein.

Let $K$ be a countable set, and for $k \in K$, let $E_k^0$ be an open subset of $\mathbb{R}^{d_k}$.
Let $E^0$ be their disjoint union:
\begin{equation*}
    E^0 := \left\{ (k,z) : k\in K, z \in E_k^0\right \}.
\end{equation*}
For any $k \in K$, we have a Lipschitz vector field on $E^0_k$ that induces a flow $\Phi_k(t,z)$.

In this setting, trajectories may reach the boundaries of the state. Hence we define the entrance and exit boundaries using the flow; $\partial^- E_k^0$ and $\partial^+ E_k^0$ respectively (see Fig.\ref{fig:exit-entrance-boundary}):
\[
    \partial^\pm E_k^0 = \{z \in \partial E_k^0 |  z = \Phi_k(\pm t,\xi) \text{ for some } \xi \in E_k^0 \text{ and } t>0 \}.
 \]
Note it may be possible for a point to be both an entrance and exit boundary.

\begin{figure}
    \centering
    \tikzset{every picture/.style={line width=0.75pt}} 
    
    \begin{tikzpicture}[x=0.75pt,y=0.75pt,yscale=-0.7,xscale=0.7]
    
    \draw   (100,142) .. controls (274.2,-91.7) and (448.4,258.85) .. (622.6,25.15) ;
    \draw    (163.6,80.15) .. controls (137.86,150.44) and (215.03,171.72) .. (192.32,217.75) ;
    \draw [shift={(191.6,219.15)}, rotate = 298.01] [color={rgb, 255:red, 0; green, 0; blue, 0 }  ][line width=0.75]    (10.93,-3.29) .. controls (6.95,-1.4) and (3.31,-0.3) .. (0,0) .. controls (3.31,0.3) and (6.95,1.4) .. (10.93,3.29)   ;
    \draw [shift={(163.6,80.15)}, rotate = 110.11] [color={rgb, 255:red, 0; green, 0; blue, 0 }  ][fill={rgb, 255:red, 0; green, 0; blue, 0 }  ][line width=0.75]      (0, 0) circle [x radius= 3.35, y radius= 3.35]   ;
    \draw    (435.6,216.15) .. controls (475.2,186.45) and (427.57,143.03) .. (459.6,109.17) ;
    \draw [shift={(460.6,108.15)}, rotate = 495] [color={rgb, 255:red, 0; green, 0; blue, 0 }  ][line width=0.75]    (10.93,-3.29) .. controls (6.95,-1.4) and (3.31,-0.3) .. (0,0) .. controls (3.31,0.3) and (6.95,1.4) .. (10.93,3.29)   ;
    \draw [shift={(435.6,216.15)}, rotate = 323.13] [color={rgb, 255:red, 0; green, 0; blue, 0 }  ][fill={rgb, 255:red, 0; green, 0; blue, 0 }  ][line width=0.75]      (0, 0) circle [x radius= 3.35, y radius= 3.35]   ;
    
    \draw (293,140.4) node [anchor=north west][inner sep=0.75pt]    {$E_{0}$};
    \draw (255,5.4) node [anchor=north west][inner sep=0.75pt]    {$E_{0}^{c}$};
    \draw (170,80.4) node [anchor=north west][inner sep=0.75pt]    {$x_{0} \in \partial E_{0}^{+}$};
    \draw (193.6,222.55) node [anchor=north west][inner sep=0.75pt]    {$x_{t} =\Phi ( t,x_{0})$};
    \draw (458,118.4) node [anchor=north west][inner sep=0.75pt]    {$x'_{t} =\Phi ( t',x_{0} ') \in \partial E_{0}^{-}$};
    \draw (425,222.4) node [anchor=north west][inner sep=0.75pt]    {$x_{0} '$};
    \draw (326,52.4) node [anchor=north west][inner sep=0.75pt]    {$\partial E_{0}$};

    \end{tikzpicture}

    \caption{Exit ($\partial E_0^-$) and Entrance boundary ($\partial E_0^+$): $x_0$ is in the entrance boundary $\partial E_0^-$, while $x_t'$ is in the exit boundary $\partial E_0^+$. The arrows represent the flow $\Phi(\cdot,\cdot)$.}
    \label{fig:exit-entrance-boundary}
\end{figure}
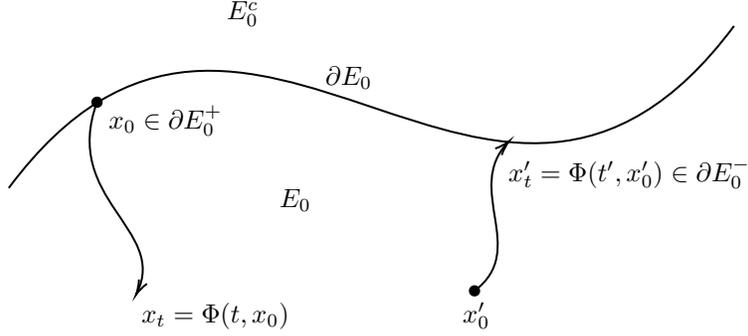

We then have $\partial_1 E_k^0 := \partial^- E_k^0\backslash \partial^+ E_k^0$ as in \cite[p57]{Davis1993}, and also 
\begin{equation*}
    E_k := E_k^0 \cup \partial_1 E_k^0.
\end{equation*}
Finally, the full state space is the disjoint union,
\begin{equation*}
    E := \bigcup_k (\{k\} \times E_k).
\end{equation*}
The active boundary (that is, the exit boundary) is then defined as
\begin{equation*}
    \Gamma := \bigcup_k (\{k\} \times \partial^+ E^0_k).
\end{equation*}
These are points on the boundary that the deterministic flow can hit.

We will denote the state of a PDMP on $E$ at time $t$ by $Z_t \in E$. A detailed construction of the PDMP is provided by Davis \cite[p59]{Davis1993}, but we provide here a summary of the quantities that defines a PDMP $(Z_t)$:
\begin{enumerate}[label=(\roman*)]
    \item An event rate $\lambda(z)$, with $z \in E$. An event occurs in $[t,t+h]$ with probability $\lambda(Z_t) h + o(h)$.
    \item A jump kernel defined for $z \in E \cup \Gamma$: $Q(\cdot|z)$ with $Q(\cdot|z)$ a probability measure on $E$. At each event time $T_i$, the state will change according to the jump kernel: $Z_{T_i} \sim Q(\cdot|Z_{T_i-})$.
    \item The deterministic flow $\Phi$ which determines the behavior of $Z_t$ between jumps. 
    \item For any trajectory $Z_t$ such that 
    \[
        \lim_{t\uparrow t_0} Z_t = Z_{t_0-} \in \Gamma,
    \]
    the state will change according to the jump kernel: $Z_{t_0} \sim Q(\cdot|Z_{t_0-})$.
\end{enumerate}

\begin{remark}
The trajectory never enters $\Gamma$, which is not in the domain.
\end{remark}

\subsection{PDMP samplers}
\label{subsec:PDMP_samplers}
In the case of most PDMP samplers, the state space is constructed by augmenting the space of interest with an auxiliary velocity space $\mathcal{V}_k$: $E_k^0 = U_k \times \mathcal{V}_k$. The deterministic flow is then typically given by free transport, i.e. $\Phi_k(t,x,v) = (x+tv,v)$, though other examples exist \cite[]{vanetti2017piecewise,terenin2018piecewise,bierkens2020boomerang}. The use of such simple dynamics allows for the exact simulation of the process dynamics, without resorting to numerical discretisation.

For the purposes of this work, it will be useful to introduce three of the more popular classes of PDMP sampler, which we will then be able to refer back to as running examples. Each of these processes work on a velocity-augmented state space and follow free-transport dynamics, and so they differ primarily in (i) the set of velocities which they use, and (ii) the nature of the jumps in the process. We describe the dynamics for each process if we wish to sample from a density $\pi(x)$ on $\mathbb R^d$.

\begin{enumerate}
    \item The \textit{Bouncy Particle Sampler} \cite[]{Bouchard-Cote2018} uses a spherically-symmetric velocity space, given by either $\mathbb{R}^d$ equipped with the standard Gaussian measure, or the unit sphere equipped with the uniform surface measure. `Bounce' events occur at rate $\lambda(x, v) = \langle v, -\nabla \log \pi  (x) \rangle_+$, and at such events, the velocity deterministically jumps to $v' = \left( I - 2 \frac{\left( \nabla \log \pi (x) \right) \left( \nabla \log \pi (x) \right)^\top}{\left( \nabla \log \pi (x) \right)^\top \left( \nabla \log \pi (x) \right)} \right) v$, i.e. a specular reflection against the level set of $\log \pi$ at $x$.
    \item The \textit{Zig-Zag Process} \cite{Bierkens2019} uses $\{ \pm 1 \}^d$ as its velocity space, equipped with the uniform measure. There are now $d$ different types of bounce events, corresponding to each coordinate of the velocity vector. Bounces of type $i$ occur at rate $\lambda_i (x, v) = \left( -v_i \partial_i \log \pi(x) \right)_+$, and at such events, $v_i$ is deterministically replaced by $-v_i$.
    \item The \textit{Coordinate Sampler} \cite{wu2020coordinate} uses $\{ \pm e_i \}_{i = 1}^d$ as its velocity space, equipped with the uniform measure, where $e_i$ is the $i^\text{th}$ coordinate vector. Bounce events again happen at rate $\lambda(x, v) = \langle v, -\nabla \log \pi (x) \rangle_+$. At such events, the velocity is resampled from the full velocity space, with probability proportional to $\lambda(x, -v')$.
\end{enumerate}
When $d = 1$, all of these processes are identical. Additionally, all three processes can be supplemented with `refreshment' events, which occur at a rate independent of $v$, and modify the velocity in a way which leaves its law invariant. This can include either full resampling, autoregressive resampling in the case of spherical velocities, coordinate-wise resampling in the case of velocity laws with independent coordinates, and other variations.

From the above definitions, it is easy to see that the event rates only make sense when $\pi$ is sufficiently smooth, and that in the presence of discontinuities, complications in defining the process will arise. Furthermore, it is not \textit{a priori} clear which processes will work best in the presence of such discontinuities.

\subsection{Review: extended generator and semigroup}
We collect some basic definitions and facts which will be crucial for our later results.

Let $\mathcal B(E)$ denote the set of bounded measurable functions $E\to \R$. For any $f \in \mathcal B(E)$, we recall the definition of the semigroup $P_t$ associated to the process $Z_t$:
\[
    P_t f(z) = \mathbb E_z[f(Z_t)], \qquad z\in E,
\]
where $\mathbb E_z$ is the expectation with respect to $\P_z$, with $\P_z$ the probability such that $\P_z[Z_0 = z] = 1$.
\begin{proposition}
    The semigroup $P_t$ is a contraction for the sup norm:
    \[
        \|P_t f\|_\infty \leq \|f\|_\infty,
    \]
    for all $f\in \mathcal{B}(E)$ and $t \geq 0$.
\end{proposition}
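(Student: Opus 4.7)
The plan is simply to unfold the definition of $P_t f$ and apply the elementary inequality $|\mathbb{E}[X]| \leq \mathbb{E}[|X|]$. Concretely, for any fixed $z \in E$, I would write
\[
    |P_t f(z)| \;=\; \bigl|\mathbb{E}_z[f(Z_t)]\bigr| \;\leq\; \mathbb{E}_z\bigl[|f(Z_t)|\bigr].
\]
Since $f \in \mathcal{B}(E)$, the pointwise bound $|f(Z_t)| \leq \|f\|_\infty$ holds $\P_z$-almost surely, and monotonicity of expectation then gives $\mathbb{E}_z[|f(Z_t)|] \leq \|f\|_\infty$. Taking the supremum over $z \in E$ yields $\|P_t f\|_\infty \leq \|f\|_\infty$, which is the claim.

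There is essentially no obstacle of substance; the only thing worth a sentence is that $z \mapsto P_t f(z)$ is itself measurable and bounded, so that its sup-norm is well-defined and the inequality makes sense within $\mathcal{B}(E)$. This follows from the standard fact, built into Davis's construction in \cite{Davis1993}, that $(z,\omega) \mapsto Z_t(z,\omega)$ can be realized jointly measurably, whence Fubini's theorem ensures $z \mapsto \mathbb{E}_z[f(Z_t)]$ is a bounded measurable function. In particular nothing in the argument requires any piecewise-smooth structure of the density or any specific form of the flow, kernel, or rate; the proposition is a purely probabilistic statement about any Markov semigroup.
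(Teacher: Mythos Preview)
Your argument is correct. The paper does not actually prove this statement; it simply cites \cite[p28]{Davis1993}, so there is no substantive comparison to make. Your direct use of $|\mathbb{E}_z[f(Z_t)]|\le \mathbb{E}_z[|f(Z_t)|]\le \|f\|_\infty$ is exactly the standard one-line proof that the reference would contain, and your remark on measurability of $z\mapsto P_t f(z)$ is a reasonable sanity check.
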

\begin{proof}
    See \cite[p28]{Davis1993}.
\end{proof}
The semigroup is said to be \textit{strongly continuous for} $f\in \mathcal B(E)$ if $\lim_{t\downarrow 0} \|P_t f-f\|_\infty =0$, and let $\mathcal B_0$ be the set of functions for which $P_t$ is strongly continuous:
\begin{equation*}
    \mathcal B_0:= \left\{f \in \mathcal B(E): \lim_{t\downarrow 0} \|P_t f-f\|_\infty =0 \right\}.
\end{equation*}
\begin{lemma}
    We have that $\mathcal B_0 \subset \mathcal B(E)$ is a Banach space with sup norm $\|\cdot\|_\infty$, and $P_t$ maps $\mathcal B_0\to \mathcal B_0$ for any $t \ge 0$.
\end{lemma}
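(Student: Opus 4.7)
The plan is to prove the two assertions separately, relying on linearity of $P_t$, the semigroup property, and the contraction property recorded in the preceding proposition.

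\textbf{Step 1: $\mathcal{B}_0$ is a linear subspace.} This is immediate from the linearity of $P_t$: for $f,g \in \mathcal{B}_0$ and $\alpha,\beta \in \mathbb{R}$, the triangle inequality gives $\|P_t(\alpha f + \beta g) - (\alpha f + \beta g)\|_\infty \leq |\alpha|\|P_t f - f\|_\infty + |\beta|\|P_t g - g\|_\infty$, which tends to $0$ as $t \downarrow 0$.

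\textbf{Step 2: $\mathcal{B}_0$ is closed in $(\mathcal{B}(E), \|\cdot\|_\infty)$, hence a Banach space.} Since $\mathcal{B}(E)$ with the sup norm is already known to be Banach, it suffices to prove closedness. Take $f_n \in \mathcal{B}_0$ with $f_n \to f$ in sup norm. For any $t > 0$, insert $\pm P_t f_n \pm f_n$ and apply the triangle inequality together with the contraction property of $P_t$ to obtain
\begin{equation*}
    \|P_t f - f\|_\infty \leq \|P_t(f - f_n)\|_\infty + \|P_t f_n - f_n\|_\infty + \|f_n - f\|_\infty \leq 2 \|f_n - f\|_\infty + \|P_t f_n - f_n\|_\infty.
\end{equation*}
Given $\varepsilon > 0$, first choose $n$ large enough that $\|f_n - f\|_\infty < \varepsilon/3$, then choose $t$ small enough using $f_n \in \mathcal{B}_0$ to make the remaining term less than $\varepsilon/3$. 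Hence $f \in \mathcal{B}_0$.

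\textbf{Step 3: $P_t$ maps $\mathcal{B}_0$ to itself.} Fix $f \in \mathcal{B}_0$ and $t \geq 0$. Using the semigroup property $P_s P_t = P_{t+s} = P_t P_s$ and the contraction property,
\begin{equation*}
    \|P_s (P_t f) - P_t f\|_\infty = \|P_t (P_s f - f)\|_\infty \leq \|P_s f - f\|_\infty \xrightarrow{s \downarrow 0} 0,
\end{equation*}
so $P_t f \in \mathcal{B}_0$. (The fact that $P_t f$ is bounded and measurable, so that $P_t f \in \mathcal{B}(E)$ in the first place, is standard and follows from the construction of the process in Davis.)

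There is no real obstacle here: the only subtlety is remembering to exploit the contraction property in Step 2 to control $\|P_t(f - f_n)\|_\infty$ uniformly in $t$, which is precisely what makes the closedness argument go through, and to use the commutativity $P_s P_t = P_t P_s$ in Step 3 so that the contraction can absorb $P_t$.
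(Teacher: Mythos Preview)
Your proof is correct and is the standard argument; the paper itself does not give a proof at all but simply cites \cite[p29]{Davis1993}, so your write-up is strictly more detailed than what appears in the paper.
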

\begin{proof}
    See \cite[p29]{Davis1993}.
\end{proof}

Let us write $(A, \mathcal D(A))$ for the infinitesimal generator (also referred to as the \textit{strong generator}) of the semigroup $(P_t)$. By definition, for all $f\in \mathcal D(A)$,
\begin{equation*}
    Af = \lim_{t\rightarrow 0} \frac{1}{t}( P_t f - f),
\end{equation*}
with this limit being taken in $\|\cdot\|_\infty$, with
\begin{equation}
    \mathcal D(A) = \left\{ f\in \mathcal{B}_0: \left\|\frac{1}{t}( P_t f - f)-g \right \|_\infty \to 0, \text{ for some }g\in \mathcal B(E) \right\}.
    \label{eq:D(A)_strong}
\end{equation}
Since $g$ in \eqref{eq:D(A)_strong} is a limit of functions in a Banach space, if such a $g$ exists, it must be unique, and $Af$ is well-defined. 

\begin{lemma}
    Let $f\in \Dom(A)$. Then $Af \in \mathcal{B}_0$.
    In other words, $A: \Dom(A) \to \mathcal{B}_0$.
\end{lemma}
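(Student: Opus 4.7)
The plan is to realize $Af$ as a sup-norm limit of elements of $\mathcal{B}_0$ and then invoke closedness of $\mathcal{B}_0$ inside $(\mathcal{B}(E), \|\cdot\|_\infty)$. Concretely, by definition of $\mathcal{D}(A)$ we have $f \in \mathcal{B}_0$ and the convergence
\[
    \left\|\tfrac{1}{t}(P_t f - f) - Af\right\|_\infty \to 0 \quad \text{as } t\downarrow 0.
\]
Since the previous lemma tells us that $P_t$ maps $\mathcal{B}_0$ into $\mathcal{B}_0$, and $\mathcal{B}_0$ is a vector space (being a Banach space by that same lemma), each of the difference quotients $t^{-1}(P_t f - f)$ lies in $\mathcal{B}_0$. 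Hence $Af$ is a uniform limit of a family in $\mathcal{B}_0$; it then suffices to show that $\mathcal{B}_0$ is closed in $\mathcal{B}(E)$ for the sup norm.

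For that closedness, I would take $g_n \in \mathcal{B}_0$ with $g_n \to g$ in $\|\cdot\|_\infty$, $g\in\mathcal{B}(E)$, and estimate
\[
    \|P_t g - g\|_\infty \leq \|P_t(g - g_n)\|_\infty + \|P_t g_n - g_n\|_\infty + \|g_n - g\|_\infty \leq 2\|g - g_n\|_\infty + \|P_t g_n - g_n\|_\infty,
\]
using the contraction property of $P_t$ on the first term. Given $\varepsilon > 0$, pick $n$ large so that $\|g - g_n\|_\infty < \varepsilon/3$, then pick $t$ small so that $\|P_t g_n - g_n\|_\infty < \varepsilon/3$, which is possible because $g_n \in \mathcal{B}_0$. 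This yields $\|P_t g - g\|_\infty \to 0$, i.e.\ $g \in \mathcal{B}_0$.

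There is no real obstacle here: the statement is a soft consequence of the two facts already established, namely that $P_t$ preserves $\mathcal{B}_0$ and that $P_t$ is a contraction. The only mildly subtle point is to note that the definition \eqref{eq:D(A)_strong} merely requires the limit $g$ to lie in $\mathcal{B}(E)$, so one does have to check a posteriori that $g$ inherits strong continuity from its approximants; the contraction estimate above is exactly what delivers this.
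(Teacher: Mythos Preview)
Your argument is correct and is essentially the same approach as the paper's: the paper's one-line proof (``immediate since $P_t$ maps $\mathcal{B}_0\to\mathcal{B}_0$'') is just a terse version of exactly the reasoning you spell out, implicitly relying on the closedness of $\mathcal{B}_0$ in $\mathcal{B}(E)$ already recorded in the preceding lemma. Your explicit $\varepsilon/3$ verification of that closedness is a nice addition but not strictly needed, since Lemma~1 already asserts $\mathcal{B}_0$ is a Banach space under $\|\cdot\|_\infty$ and hence closed in $\mathcal{B}(E)$.
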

\begin{proof}
    This is immediate since $P_t$ maps $\mathcal{B}_0\to \mathcal{B}_0$.
\end{proof}

We now define the \textit{extended generator} $(\textswab{A}, \mathcal D(\textswab{A}))$: $\mathcal D(\textswab{A})$ is the set of (potentially unbounded) measurable functions $f: E \to \R$ such that there exists a measurable function $h: E\to \R$ with $t \mapsto h(Z_t)$ $\P_z$-integrable almost surely for each initial point $z$, and such that the process 
\begin{equation}
    C_t^f := f(Z_t) -f(Z_0) -\int_0^t h(Z_s)\dif s, \quad t\ge 0,
    \label{eq:Ctf}
\end{equation}
is a local martingale; see \cite[(14.16)]{Davis1993}. For $f \in\mathcal D(\textswab{A}) $, $\textswab{A}f=h$, for $h$ as in \eqref{eq:Ctf}.

\begin{proposition}
    The extended generator is an extension of the infinitesimal generator:
    \begin{enumerate}
        \item $\Dom(A)\subset \Dom(\textswab{A})$
        \item $A f = \textswab{A} f $ for any $f\in \Dom (A)$.
    \end{enumerate}
\end{proposition}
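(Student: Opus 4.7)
The plan is to verify, for any $f \in \mathcal{D}(A)$, that the candidate function $h := Af$ satisfies the conditions in the definition of the extended generator with the same $f$. Concretely, I need (i) integrability of $s \mapsto Af(Z_s)$ along $\mathbb{P}_z$-paths, and (ii) the martingale property of the process $C_t^f = f(Z_t) - f(Z_0) - \int_0^t Af(Z_s)\dif s$. Both then give $f \in \mathcal{D}(\textswab{A})$ with $\textswab{A}f = Af$, which is exactly what is asserted.

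For (i), the previous lemma tells us that $Af \in \mathcal{B}_0 \subset \mathcal{B}(E)$, so $Af$ is bounded; hence $s \mapsto Af(Z_s)$ is bounded and therefore $\mathbb{P}_z$-integrable on every compact time interval, almost surely, for every initial point $z$.

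For (ii), my main tool is Dynkin's formula, which I would derive from the strong-generator definition and the semigroup property: since $\lim_{h \downarrow 0}\|h^{-1}(P_h f - f) - Af\|_\infty = 0$ and $P_t$ is a contraction on $\mathcal{B}_0$, the map $t \mapsto P_t f$ is strongly differentiable in $\mathcal{B}_0$ with derivative $P_t(Af)$; integrating gives
\[
    P_t f - f = \int_0^t P_s(Af)\dif s.
\]
Evaluating at $Z_t$ and using the Markov property of $(Z_t)$ together with Fubini (justified by boundedness of $Af$), I obtain, for $s,t \ge 0$,
\[
    \mathbb{E}_z\bigl[f(Z_{t+s}) \mid \mathcal{F}_t\bigr] = P_s f(Z_t) = f(Z_t) + \mathbb{E}_z\!\left[\int_t^{t+s} Af(Z_u)\dif u \,\Big|\, \mathcal{F}_t\right],
\]
so $\mathbb{E}_z[C_{t+s}^f \mid \mathcal{F}_t] = C_t^f$. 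Hence $C_t^f$ is a genuine $\mathbb{P}_z$-martingale, in particular a local martingale. Combined with (i), this verifies the definition from \eqref{eq:Ctf}, yielding $f \in \mathcal{D}(\textswab{A})$ and $\textswab{A}f = Af$, which establishes both items (1) and (2) simultaneously.

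The only mildly delicate step is the derivation of Dynkin's formula — specifically, promoting the one-sided limit $h^{-1}(P_h f - f) \to Af$ at zero into an actual strong derivative of $t \mapsto P_t f$ on $[0,\infty)$ and thereby an $\|\cdot\|_\infty$-Bochner integral identity. This is handled by combining strong continuity of $P_t$ on $\mathcal{B}_0$ with the contraction property, which give uniform control of the difference quotients. Everything else (the Markov property, Fubini, boundedness of $Af$) is standard and causes no trouble.
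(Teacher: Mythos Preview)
Your argument is correct and is precisely the standard route: boundedness of $Af$ gives the required path integrability, and Dynkin's formula combined with the Markov property shows $C_t^f$ is a genuine martingale, whence $f\in\mathcal D(\textswab A)$ with $\textswab Af=Af$. The paper does not supply its own proof but simply refers to \cite[p.~32]{Davis1993}; your write-up is essentially the content of that citation (indeed, the paper later invokes Davis's Dynkin formula (14.13) for exactly this implication).
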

\begin{proof}
    See \cite[p32]{Davis1993}.
\end{proof}

To simplify notation, we will define the action of our probability kernel, $Q$, on a function, $f$, as 
\[
(Qf)(z)=\int_E f(y)Q(\mbox{d}y|z).
\]

We will assume throughout this work that the \textit{standard conditions} of Davis \cite[(24.8)]{Davis1993} hold. Under this assumption or PDMPs, $(\textswab{A}, \mathcal D(\textswab{A}))$ are fully characterized in \cite[(26.14)]{Davis1993}. In particular, the set $\mathcal D(\textswab{A})$ is entirely known, and for all $f\in \mathcal{D}(\textswab{A})$:
\begin{equation}
    \label{eq:extended-gen-expression}
    \textswab{A} f(z) = \Xi f(z) + \lambda(z)\{ (Qf)(z)-f(z) \},
\end{equation}
where $\Xi$ is the differential operator associated to the deterministic flow of the PDMP.

The PDMP samplers we are interested in (see Section~\ref{subsec:PDMP_samplers}) have flow corresponding to free transport, with corresponding $\Xi$ operator for continuously differentiable $f$,
\begin{equation*}
    \Xi f(x,v) = v\cdot \nabla_x f(x,v).
\end{equation*}

\begin{remark}
To reiterate, while the domain of the strong generator $\mathcal D(A)$ is not known, the domain $\mathcal D(\textswab{A})$ is known and $\Dom(A)\subset \Dom(\textswab{A})$.
\end{remark}

\section{A general framework for the invariant measure of a PDMP}
\label{sec:invariant_general}

A challenge in the piecewise-smooth setting is that the usual approach to constructing and working with PDMPs does not work without changing the topology. In particular, existing results concerning the invariant measure of such processes requires the process to be Feller. For PDMPs with boundaries, this is in fact not the case in general \cite{Davis1993}.

\subsection{Strong continuity of the semigroup}
First, we give a general result that is not tied to our specific context and is valid for any piecewise-deterministic Markov process that follows Davis's construction, \cite[Section 24, Conditions (24.8)]{Davis1993}.

Let $\mathcal{F}$ be the space of $C^1$ functions contained in $\mathcal{D}(\textswab{A})$ with compact support.
\begin{proposition}
    \label{prop:strong-continuity}
    Assume that the deterministic flow and the jump rates are bounded on any compact set, and that $Qf$ 
    has compact support whenever $f$ has compact support. Then, $\mathcal{F} \subset \mathcal{B}_0$. In other words,
        the semigroup $P_t$ of the process is strongly continuous on $\mathcal{F}$: 
        \[
            P_t f \rightarrow f \text{ for all } f \in \mathcal{F},
        \]
        in $\|\cdot\|_\infty$, as $t\downarrow 0$.
\end{proposition}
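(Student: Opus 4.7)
The plan is to leverage the extended generator's martingale characterization \eqref{eq:Ctf}. For any $f \in \mathcal{F} \subseteq \mathcal{D}(\textswab{A})$, the process $C_t^f = f(Z_t) - f(Z_0) - \int_0^t \textswab{A}f(Z_s)\,ds$ is a local martingale. If I can establish that $\textswab{A}f$ is globally bounded on $E$, then $C_t^f$ is uniformly bounded on each compact time interval, hence a true (uniformly integrable) martingale. Taking $\mathbb{E}_z$ of both sides then gives the Dynkin identity
\[
P_t f(z) - f(z) = \mathbb{E}_z\!\left[\int_0^t \textswab{A}f(Z_s)\,ds\right],
\]
which is bounded in absolute value by $t\,\|\textswab{A}f\|_\infty$ uniformly in $z$. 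Sending $t \downarrow 0$ then yields $\|P_t f - f\|_\infty \to 0$, which is the desired strong continuity.

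The work therefore reduces to proving $\|\textswab{A}f\|_\infty < \infty$. Using the explicit formula \eqref{eq:extended-gen-expression}, I would decompose $\textswab{A}f = \Xi f + \lambda\,(Qf - f)$ and bound each summand separately. For the transport term, since $f$ is $C^1$ with compact support $K$, its gradient is continuous and vanishes off $K$; the generating vector field of $\Phi$ is bounded on the compact set $K$ by hypothesis, so $\Xi f$ is bounded on $K$ and vanishes elsewhere, hence bounded globally. For the jump term, the assumption that $Q$ maps compactly supported functions to compactly supported functions yields that $Qf$ has compact support, so $Qf - f$ is supported on a compact set $K'$; since $\lambda$ is bounded on $K'$ by hypothesis, the product $\lambda(Qf - f)$ is bounded globally and vanishes off $K'$.

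The main obstacle is precisely this global-boundedness step, because neither the vector field nor $\lambda$ is assumed bounded on all of $E$ -- only on compact subsets. The two hypotheses of the proposition are tailored exactly to circumvent this: compact support of $\nabla f$ localizes $\Xi f$ to $K$, and the compact-support-preservation of $Q$ localizes $\lambda(Qf-f)$ to $K'$. If $Q$ failed to preserve compact support (say the jump kernel has heavy tails scattering mass to large $\lambda$-regions), then even a smooth compactly supported $f$ could produce an unbounded $\textswab{A}f$, and this direct martingale argument would fail. The only remaining care is the standard promotion of the bounded local martingale $C_t^f$ to a true martingale, which is routine since $|C_t^f| \le 2\|f\|_\infty + t\,\|\textswab{A}f\|_\infty$ on $[0,t]$.
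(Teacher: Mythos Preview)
Your proposal is correct and follows essentially the same route as the paper's proof: both arguments use the extended-generator local-martingale $C_t^f$, establish global boundedness of $\textswab{A}f$ via the compact-support hypotheses on $f$, $Qf$, and the local boundedness of $\lambda$ and the flow field, upgrade the local martingale to a true martingale by boundedness on finite time intervals, and then conclude $\|P_tf-f\|_\infty \le t\,\|\textswab{A}f\|_\infty$. The only cosmetic difference is that the paper passes through the identity $P_tf-f=\int_0^t P_s\textswab{A}f\,\dif s$ (via Fubini) and invokes the contraction property of $P_s$, whereas you bound $\bigl|\E_z\bigl[\int_0^t \textswab{A}f(Z_s)\,\dif s\bigr]\bigr|$ directly; these are equivalent.
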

\begin{proof}
Let $f\in \mathcal F$. Since $f \in \mathcal{D}(\textswab{A})$, 
\[
C_t^f := f(Z_t) -f(Z_0) -\int_0^t \textswab{A} f(Z_s) \dif s, \quad t\ge 0,
\]
is a local martingale. Furthermore, by examining the expression of $\textswab{A}f$, one sees that it can be rewritten as
\[
    \textswab{A} f(z) = \Xi f(z) + \lambda(z) Qf(z) - \lambda(z) f(z) 
\]
from (\ref{eq:extended-gen-expression}).
Since $f$ and $Qf$ have compact support and are bounded, using the assumptions on $\Xi$ and $\lambda$,
we deduce that $\textswab{A}f$ is bounded. 

Since $f$ and $\textswab{A}f$ are bounded, $C_t^f$ is bounded for any fixed $t$ which implies that it is a martingale. More precisely: consider the stopped process $C^f_{t\wedge T}$ for any $T>0$. This is a uniformly bounded local martingale, and is hence a true martingale.

We have $C_0^f = 0$ hence for any starting point $z$ and $t>0$,
\[
    \mathbb{E}_z[C_t^f] = 0.
\]
Hence
\[
    P_t f (z) - f(z) = \int_0^t P_s \textswab{A} f(z) \dif s,
\]
where we used Fubini's theorem to swap the integral and the expectation.
Since $P_t$ is a contraction for the sup norm, we see that
\begin{align*}
    \|P_t f - f\|_\infty &\leq \int_0^t \|P_s \textswab{A} f \|_\infty \dif s \\
    &\leq \int_0^t \| \textswab{A}f \|_\infty \dif s \\
    &\leq t \|\textswab{A}f\|_\infty.
\end{align*}
We thus conclude that $P_t f - f \rightarrow 0$ as $t \downarrow 0$. 
\end{proof}

\begin{remark}
The set $\mathcal{F}$ does not capture every function of $\mathcal{B}_0$, nor is it invariant under $P_t$. We will will not attempt to prove that $\mathcal{F}$ is a core of the infinitesimal generator.
\end{remark}

Recall that a set of functions $\mathcal F_0 \subset \mathcal B(E)$ separates measures if for any probability measures $\mu_1, \mu_2$ on $E$, $\int f\dif \mu_1=\int f \dif \mu_2$ for all $f\in \mathcal F_0$ implies that $\mu_1=\mu_2$. In order to study the invariant measure through the semigroup and its effect on functions, it is important to consider sets of functions which separate measures. Therefore, we will now show that $\mathcal{F}$ separates measures on $E^0$.

\begin{proposition}
    Assume that the jump kernel $Q$ is such that for any $z \in \Gamma$, the measure $Q(\cdot | z)$ is supported on the boundary $\cup_k \partial E^0_k$.
    Then $\mathcal{F}$ separates measures on $E^0$. 
    \label{prop:F_sep_meas}
\end{proposition}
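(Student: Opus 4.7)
The plan is to exhibit a sufficiently rich sub-class $\mathcal{F}_0 \subset \mathcal{F}$ which is already known, on soft grounds, to separate Borel probability measures on $E^0$. A natural candidate is the set of $C^1$ functions $f : E \to \R$ whose support is a \emph{compact} subset of the open set $E^0 = \bigcup_k (\{k\} \times E^0_k)$; equivalently, after fixing a component $k$, functions in $C_c^1(E^0_k)$ extended by zero to the rest of $E$.

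First I would verify $\mathcal{F}_0 \subset \mathcal{F}$. Smoothness and compact support are built into the definition, so the only thing to check is membership in $\mathcal{D}(\textswab{A})$. Under the standard conditions of \cite[(24.8)]{Davis1993}, the characterisation \cite[(26.14)]{Davis1993} of $\mathcal{D}(\textswab{A})$ requires (i) absolute continuity of $t \mapsto f(\Phi_k(t,z))$ along the flow, (ii) an integrability condition on the jump term, and (iii) the boundary compatibility condition $f(z) = (Qf)(z)$ for every $z \in \Gamma$, where $f(z)$ is understood as the left-limit along the flow. For $f \in \mathcal{F}_0$, (i) and (ii) are immediate, as $f$ is bounded and $C^1$ with a bounded flow and jump rate on its compact support. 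For (iii), the support of $f$ is a compact subset of the open set $E^0$, hence disjoint from both $\Gamma$ and $\bigcup_k \partial E^0_k$; therefore $f(z) = 0$ in the left-limit sense for $z \in \Gamma$, and by the hypothesis that $Q(\cdot \mid z)$ is supported on $\bigcup_k \partial E^0_k$, where $f$ vanishes identically, we also have $(Qf)(z) = 0$. Both sides of the boundary condition vanish and (iii) is satisfied trivially.

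Second, I would invoke the standard measure-theoretic fact that $C_c^1$-functions separate Borel probability measures on an open subset of a Euclidean space (because $C_c^\infty \subset C_c^1$ is dense in $C_c$ in the uniform norm via mollification, and $C_c$ separates Radon probability measures on a locally compact Hausdorff space by the Riesz representation theorem). Since a Borel measure on the disjoint union $E^0$ is uniquely determined by its restrictions to each component $\{k\} \times E^0_k$, the class $\mathcal{F}_0$ separates Borel probability measures on $E^0$, and \emph{a fortiori} so does the larger class $\mathcal{F}$.

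The only genuinely delicate step is (iii): this is the sole place the hypothesis on $Q$ enters, and it is precisely this hypothesis that permits us to restrict to test functions vanishing near $\Gamma$, thereby sidestepping any nontrivial relation between interior and boundary values of $f$. Everything else is bookkeeping on a disjoint union of open Euclidean pieces.
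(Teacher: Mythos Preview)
Your proposal is correct and follows essentially the same route as the paper's own proof: exhibit the class of $C^1$ functions compactly supported inside $E^0$, verify via Davis's characterisation of $\mathcal D(\textswab A)$ that the boundary condition $f(z)=(Qf)(z)$ on $\Gamma$ holds trivially (both sides vanish, the right-hand side thanks to the support hypothesis on $Q$), and then appeal to the fact that $C_c^1$ functions separate measures on an open Euclidean set. The only difference is that you spell out the separation-of-measures argument and the conditions (i)--(ii) in slightly more detail than the paper does.
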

\begin{proof}
    Consider the set of $C^1$ functions $f:E \to \R$, which are compactly supported on each open set $E_k^0$. The collection of such functions separates measures on $E^0$.
    
    We will show that such functions belong to $\mathcal D(\textswab{A})$, and hence to $\mathcal{F}$, by using the explicit characterisation of  $\mathcal D(\textswab{A})$ from \cite[Theorem 26.14]{Davis1993}.
    
    Firstly, if $f$ is $C^1$ with compact support, Conditions~1 and 3 of \cite[Theorem 26.14]{Davis1993} are automatically satisfied. 
    
    It remains to check the boundary condition:
    \begin{equation}
        f(z) = \int_E f(y) Q(\dif y;z) = Qf(z),\quad z\in \Gamma.
        \label{eq:bdy_cond}
    \end{equation}
    However, since we are considering only $f$ which are compactly supported on each $E_k^0$, it holds that $f(z)=0$ for any $z \in \Gamma$ on the boundary. Recalling that $Q(\cdot | z)$ is supported only on the boundary, it follows that $Qf(z)=0$ also for any $z \in \Gamma$. It hence follows that the boundary condition is satisfied.
\end{proof}

\subsection{Invariant measure}

We now turn to giving conditions for the invariant measure of our PDMP. The following lemma will be important within the proof, as it allows us to ignore
contributions from the boundary when calculating expectations over the path of the PDMP.
\begin{lemma}
    \label{lemma:boundary-time}
    For all $z\in E$, the process starting from $z$ spends a negligible amount of time on the boundary: for any $t>0$,
    \[
        \int_0^t 1_{Z_s\in E\setminus E^0}\dif s = 0,
    \] $\mathbb{P}_z$-a.s.
\end{lemma}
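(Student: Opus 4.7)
The plan is to show that the set of times $s \in [0,t]$ for which $Z_s$ lies on the boundary piece $E \setminus E^0$ is in fact almost surely finite, so that its Lebesgue measure — and hence the integral — is zero. I would organise the argument around the Davis construction, examining the trajectory piece by piece between successive jump times.

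First, I would appeal to the standard conditions (24.8) of Davis assumed at the outset: these ensure that the jump counting process $N_t$ satisfies $\mathbb{E}_z[N_t] < \infty$, so in particular only finitely many jumps $T_0 = 0 < T_1 < T_2 < \ldots$ occur in $[0, t]$, $\mathbb{P}_z$-almost surely (no Zeno behaviour). Thus it suffices to analyse the process on each open interval $(T_i, T_{i+1})$ and show that $Z_s \in E^0$ there.

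Second, I would establish that on $(T_i, T_{i+1})$, the flow trajectory $s \mapsto \Phi_k(s - T_i, Z_{T_i})$ takes values in $E^0_k$. There are two cases. If $Z_{T_i} \in E^0_k$, this is immediate from the definition of the exit boundary $\partial^+ E^0_k$ together with the fact that reaching $\Gamma$ triggers an instantaneous jump, so the flow remains in the open set $E^0_k$ strictly before $T_{i+1}$. If instead $Z_{T_i} \in E \setminus E^0 = \bigcup_k \partial_1 E^0_k \subset \partial^- E^0_k$, then by the very definition of the entrance boundary, $Z_{T_i} = \Phi_k(-u, \xi)$ for some $\xi \in E^0_k$ and $u > 0$, and hence $\Phi_k(s - T_i, Z_{T_i}) \in E^0_k$ for all $s \in (T_i, T_{i+1})$. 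The same reasoning covers the initial time in the case $Z_0 = z \in E \setminus E^0$.

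Combining these two steps, the random set $\{s \in [0,t] : Z_s \in E \setminus E^0\}$ is contained in $\{T_i : i \geq 0,\ T_i \leq t\}$, which is $\mathbb{P}_z$-a.s.\ finite and hence Lebesgue-null, yielding the claim. The only conceptually subtle point — and the main thing to write carefully — is the treatment of the entrance boundary after a jump: one must invoke the explicit characterisation of $\partial_1 E^0_k$ to conclude that the flow pushes the process into $E^0_k$ immediately, rather than lingering on the boundary for a positive duration. Everything else is essentially bookkeeping using the piecewise-deterministic structure.
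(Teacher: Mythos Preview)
Your proposal is correct and follows essentially the same route as the paper, only with considerably more detail. The paper's own proof is two sentences: it simply observes that in Davis's construction the number of events (including boundary jumps) is countable for every trajectory, so the set of times at which $Z_t \in E \setminus E^0$ is countable and hence Lebesgue-null. Your argument unpacks precisely why the process sits in $E^0$ between consecutive jump times---in particular, the case analysis for $Z_{T_i}$ landing on $\partial_1 E_k^0$ after a jump is something the paper leaves entirely implicit---and appeals to the standard conditions to get finiteness of jumps rather than mere countability, which is a harmless strengthening.
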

\begin{proof}
    In Davis's construction, the number of events, including jumps at the boundary, is countable for every trajectory. Hence, for every trajectory of the process, the set of times for which $Z_t \in E\setminus E^0$ is countable and therefore negligible.
\end{proof}

\begin{theorem}
\label{th:distr-invariant-active}
Let $\mu$ be a measure on $E$.
Assuming the following conditions hold
\begin{enumerate}
    \item  the vector field of the deterministic flow and the jump rates are bounded on any compact set,
    \item $Qf$ has compact support whenever $f$ has compact support, and $Q$ satisfies the condition of Proposition~\ref{prop:F_sep_meas},
    \item $\mu(E\setminus E^0) = 0$,
    \item for all $f \in \mathcal{D}(A)$, $\int_E Af \dif\mu = 0$,
\end{enumerate}
then $\mu$ is invariant.
\end{theorem}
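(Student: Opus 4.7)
The plan is to establish $\mu P_t = \mu$ by verifying $\int P_t f \, \dif \mu = \int f \, \dif \mu$ for $f$ in the separating class $\mathcal F$, bridging from $\Dom(A)$ (where condition~4 is posed) to $\mathcal F$ (which separates measures on $E^0$ by Proposition~\ref{prop:F_sep_meas}) through a density argument. The proof falls naturally into three stages.

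First, I would establish invariance on $\Dom(A)$. For $f \in \Dom(A)$, standard semigroup theory gives
\[
P_t f - f = \int_0^t P_s A f \, \dif s
\]
in $\|\cdot\|_\infty$, together with the facts that $P_s$ maps $\Dom(A)$ into itself and $A P_s f = P_s A f$. Applying condition~4 to the element $P_s f \in \Dom(A)$ yields $\int P_s A f \, \dif \mu = \int A(P_s f) \, \dif \mu = 0$ for every $s \ge 0$. Assuming $\mu$ is finite (or using a truncation under $\sigma$-finiteness), Fubini then gives $\int P_t f \, \dif \mu = \int f \, \dif \mu$ on $\Dom(A)$.

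Second, I would extend this identity from $\Dom(A)$ to $\mathcal F$. Proposition~\ref{prop:strong-continuity} supplies $\mathcal F \subset \mathcal B_0$, and the classical Hille--Yosida fact that the strong-generator domain is dense in the Banach space on which the semigroup is strongly continuous gives that $\Dom(A)$ is $\|\cdot\|_\infty$-dense in $\mathcal B_0$. Approximating $f \in \mathcal F$ by $f_n \in \Dom(A)$ in sup norm, the contraction property of $P_t$ and finiteness of $\mu$ let us pass to the limit on both sides, so $\int P_t f \, \dif \mu = \int f \, \dif \mu$ for every $f \in \mathcal F$. Restricting further to $f$ compactly supported inside each $E_k^0$ (the sub-class used in the proof of Proposition~\ref{prop:F_sep_meas}), such $f$ vanish on $E \setminus E^0$, so the identity identifies the restrictions of $\mu$ and $\mu P_t$ to $E^0$ on a separating family, yielding $\mu|_{E^0} = (\mu P_t)|_{E^0}$. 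Combining with condition~3 and Lemma~\ref{lemma:boundary-time} (which via Fubini forces $\mu P_t(E \setminus E^0) = 0$ for Lebesgue-a.e.\ $t$) then gives $\mu P_t = \mu$, with a short continuity/semigroup-factorisation step to upgrade ``a.e.\ $t$'' to ``all $t$''.

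The principal obstacle is the generator mismatch: condition~4 is imposed on the strong generator $A$, whose domain is not given explicitly, while the natural test functions sit in $\mathcal F \subset \Dom(\textswab A)$. This is exactly the gap that Proposition~\ref{prop:strong-continuity} was designed to bridge---it places $\mathcal F$ inside the Banach space $\mathcal B_0$ on which $\Dom(A)$ is dense, enabling the sup-norm approximation step above. A secondary technical subtlety is showing that the pushed measure $\mu P_t$ remains concentrated on $E^0$ so that the separation property of Proposition~\ref{prop:F_sep_meas} suffices to conclude equality on all of $E$; this is handled via Lemma~\ref{lemma:boundary-time}, which is precisely why the lemma was isolated.
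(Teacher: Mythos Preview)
Your proposal is correct and reaches the same conclusion through essentially the same boundary-handling machinery (Lemma~\ref{lemma:boundary-time}, the decomposition $f=f_0+f_B$, the a.e.-$t$ identification via separation on $E^0$, and the semigroup factorisation to upgrade to all $t$). The point of departure is how you obtain $\int P_t f\,\dif\mu=\int f\,\dif\mu$ for $f\in\mathcal F$. You do it in two moves: first prove it for $f\in\Dom(A)$ via $P_t f-f=\int_0^t P_s Af\,\dif s$ and condition~4 applied to $P_s f\in\Dom(A)$, then invoke Hille--Yosida density of $\Dom(A)$ in $\mathcal B_0$ to approximate $f\in\mathcal F\subset\mathcal B_0$ in sup norm. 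The paper instead uses the companion semigroup identity $P_t f - f = A\int_0^t P_s f\,\dif s$ together with the fact (Ethier--Kurtz, or Davis (14.10)) that $\int_0^t P_s f\,\dif s\in\Dom(A)$ for any $f\in\mathcal B_0$; condition~4 then applies directly to $g=\int_0^t P_s f\,\dif s$, yielding the identity for $f\in\mathcal F$ in a single stroke with no approximation step. Your route is perfectly sound (under the same implicit finiteness of $\mu$ both arguments use for Fubini and dominated convergence), but the paper's is slightly more economical: it bypasses the density statement and the limiting argument entirely by landing in $\Dom(A)$ through time-averaging rather than through approximation.
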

\begin{proof}
The assumptions of Proposition~\ref{prop:strong-continuity} hold, hence the semigroup $P_t$ is strongly continuous on $\mathcal{F}$.
Using this fact and Proposition~1.5 from Ethier and Kurtz \cite{Ethier1986} (or \cite[(14.10)]{Davis1993}), we note that for any $f \in \mathcal{F}$ and $t > 0$, we have that $\int_0^t P_s f \dif s \in \mathcal{D}(A)$, the domain of the strong generator. We also have that
\begin{equation}
        \int_E P_t f \dif \mu  = \int_E \left [f + A \int_0^t P_s f \dif s\right  ] \dif \mu=  \int_E f \dif \mu,
        \label{eq:th-invariant-measure1}
\end{equation}
where we have used our assumption that $\int Ag \dif \mu=0$ for any $g \in \Dom (A)$ and taken $g = \int_0^t P_s f\dif s$.

Let $f_B(z) := f(z) 1_{z\in E\setminus E^0}$ and $f_0(z) := f(z) 1_{z\in E^0}$ be a decomposition of $f$ with $f = f_0 + f_B$. Let $1_B(z) = 1_{z\in E\setminus E^0}$ be the indicator function of the boundary $E\setminus E^0$.

From Lemma~\ref{lemma:boundary-time}, $\int_0^t P_s 1_B(z) \dif s = 0$ for all $z\in E$. Hence we have that $\int_E [ \int_0^t P_s 1_B(z) \dif s ] \dif \mu(z) = 0$, and by using Fubini's theorem, that
\[
\int_0^t \int_E P_s 1_B \dif \mu \dif s = 0.
\]
By the nonnegativity of $P_t 1_B$, there exists a null set $\mathcal{N} \subset \mathbb{R}^+$ such that for all $t\in \mathbb{R}^+\setminus \mathcal{N}$, 
\begin{equation*}
    \int_E P_t 1_B \dif\mu = 0.  
\end{equation*}

For all $z$, $f_B(z) \leq \|f\|_\infty 1_B(z)$, hence for all $t \notin \mathcal N$, $\int_E P_t f_B \dif \mu = 0$. Hence $\int_E P_t f \dif \mu = \int_E P_t f_0 \dif\mu$ for all $t \notin \mathcal N$.
Since $\mu$ is supported on $E^0$, $\int_E f \dif \mu = \int_E f_0 \dif \mu$ and we deduce using \eqref{eq:th-invariant-measure1} that for all $t\notin \mathcal N$:
\begin{equation}
    \label{eq:th-invariant-measure2}
    \int_E P_t f_0 \dif \mu = \int_E f_0 \dif \mu.
\end{equation}

Let $\mu_t$ be the law $Z_t$ with $Z_0 \sim \mu$. Let $\mu_t^0$ and $\mu_t^B$ be the measures defined by $\mu_t^0(A) = \mu_t(A\cap E^0)$ and $\mu_t^B(A) = \mu_t(A\cap (E\setminus E^0))$. Using \eqref{eq:th-invariant-measure2}, for all $t\notin \mathcal N$:
\[
    \int_{E^0} f_0 \dif \mu_t^0 = \int_{E} P_t f_0 \dif \mu = \int_E f_0 \dif \mu = \int_{E_0} f_0 \dif \mu^0.
\]
Since $\mathcal F$ separates measures on $E_0$ by Proposition~\ref{prop:F_sep_meas}, $\mu_t^0 = \mu^0$ for all $t\notin \mathcal N$. Furthermore, $\mu(E) = \mu_t(E)$ and $\mu(E) = \mu^0(E)$, hence $\mu_t^0(E) = \mu_t(E)$ and $\mu_t^B(E) = 0$. Thus $\mu_t = \mu_t^0 = \mu^0 = \mu$ for all $t\notin \mathcal N$.

Let $t_1,t_2\notin \mathcal N$. Then $\mu_{t_1} = \mu_{t_2} = \mu$, and for all functions $g$ which are measurable and bounded, it holds that
\[
    \int_E P_{t_1+t_2} g \dif \mu = \int_E P_{t_1} (P_{t_2} g) \dif \mu = \int_E P_{t_2} g \dif \mu_{t_1} = \int_E P_{t_2} g \dif \mu = \int_E g \dif \mu.
\]
Hence $\mu_{t_1+t_2} = \mu$.
To conclude, since $\mathcal N$ is a null set, for all $ t > 0$, there exists $t_1,t_2 \notin \mathcal N$ such that $t = t_1 + t_2$, and therefore $\mu_t = \mu$.
\end{proof}

\section{PDMP samplers with active boundaries}
Let $U_k$ be an open set of $\mathbb{R}^{d_k}$ for all $k \in K$, and let $\mathcal{V}^k \subset \mathbb{R}^{d_k}$ be the velocity space associated to the PDMP sampler on $U_k$. We consider the state space defined following Davis's construction described in Section~\ref{sec:Davis-construction}: first, set
\[
    E_k^0 = U_k \times \mathbb{R}^{d_k}.
\]
\begin{remark}
We do not take $E_k^0 = U_k\times \mathcal{V}^k$ because $E_k^0$ must be open and $\mathcal{V}^k$, the set of velocities of the PDMP sampler, might not be.
\end{remark}
Let $\pi$ be a measure on the disjoint union $\cup_k U_k$ with a density $\pi_k$ on each $U_k$, where $\pi_k$ can be extended continuously to the closure $\bar{U}_k$. Let $p_k$ be the marginal velocity probability distribution on $\mathbb{R}^{d_k}$ with support on $\mathcal{V}^k$ and let $\mu(k,x,v) = \pi_k(x) p_k(v)$ be a density on $\cup_{k\in K} U_k \times \mathcal{V}^k$ defining a measure on $E$.

The core result of this section relies on integration by parts, and as such requires extra assumptions on the sets $U_k$. For clarity of the exposition, we give here an intuitive version of the required assumptions, and a detailed version can be found in the appendix in Assumptions~\ref{ass:space} and \ref{ass:space-v}.
\begin{assumption}
    Assumptions \ref{ass:space} and \ref{ass:space-v} can be informally described as:
    \begin{enumerate}[label=(\roman*)]
        \item $U_k$ has no interior discontinuities on a $(d_k -1)$-dimensional subset. 
        \item The boundary $\partial U_k$ can be decomposed into a finite union of smooth parts, on each of which the normal is well-defined. \label{ass:informal-finite-union}
        \item The set of corner points, on which the normals of the boundary are not defined, is small.
        \item For each $x\in U_k, v\in \mathcal{V}^k$, there is a finite number of intersections between each line $x+\mathbb{R}v$ and $\partial U_k$.
        \item For each $v \in \mathcal{V}^k$, the projection of the points on the boundary, which are tangent to the velocity, onto $H_v = \mathrm{span}(v)^{\perp}$ is small. \label{ass:informal-projection-tangent}
    \end{enumerate}
    \label{ass:space-informal}
\end{assumption}

Let $N_k$ be the subset of points $x$ on the boundary $\partial U_k$ where the normal $n(x)$ is properly defined (see \eqref{eq:def-Nk} of the Appendix for a precise statement). 

\begin{assumption}
    \label{ass:pi-interior}
    \begin{enumerate}[label=(\roman*)]
        \item $\int |\lambda(z)| \dif \mu < \infty$; \label{ass:lambda-integrable}
        \item for all $k\in K$, $\pi_k$ is $C^1$ in $\bar{U}_k$ \label{ass:pi-C1}
        \item For any $k \in K$, and any $v \in \mathcal{V}^k$, $\nabla \pi_k \cdot v$ is in $L_1(\mathrm{Leb})$. \label{ass:pi-dot-integrable}
        \item for every $z\in E^0$, $Q(\cdot |z)$ is a probability measure supported on the interior $E^0$.
    \end{enumerate}
\end{assumption}

\begin{theorem}
If Assumptions~\ref{ass:pi-interior},~\ref{ass:space}~and~\ref{ass:space-v} hold, then for all $f\in \Dom(A)$:
\begin{align*}
\int_{E} Af \dif \mu = &-\int_{E^0} f(k,x,v) \nabla_x \mu \cdot v  \dif x \dif v +\int_{E^0} \lambda(z) [Qf(z) - f(z)] \dif \mu\\
    &+\sum_{k\in K}
    \int_{x\in \partial U_k \cap N_k,v\in \mathcal{V}^k} f(k,x,v) \pi_k(x) \langle n(x),v \rangle \dif \sigma(x) \dif p(v)
\end{align*}
where $f(k,x,v)$ is defined as $f(k,x,v) := \lim_{t\downarrow 0} f(k,x-tv,v)$, for $x\in \partial U_k \cap N_k,v\in \mathbb{R}^{d_k}$ such that $\langle n(x),v\rangle > 0$,and $\sigma$ is the Lebesgue measure induced on the surface $\partial U_k$.
\label{th:intAf}
\end{theorem}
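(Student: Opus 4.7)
My strategy is to pass from the strong generator to the explicit form of the extended generator, and then integrate by parts in the spatial variable to expose the boundary contribution. First I invoke the proposition following \eqref{eq:extended-gen-expression}, which says that $Af=\textswab{A}f$ for $f\in\Dom(A)$, together with the free-transport expression $\Xi f(x,v)=v\cdot\nabla_x f(x,v)$, to obtain
\[
\int_E Af\,\dif\mu \;=\; \int_E \bigl[v\cdot\nabla_x f(k,x,v) + \lambda(k,x,v)\{(Qf)(k,x,v) - f(k,x,v)\}\bigr]\,\dif\mu.
\]
Since $\mu$ is supported on $E^0$ (being the product density $\pi_k(x)p_k(v)$ on $\bigcup_k U_k\times\mathcal V^k$), and since $\lambda$ is $\mu$-integrable by the hypothesis in Assumption~\ref{ass:pi-interior}, the jump term already coincides with the second term of the conclusion, and it remains only to identify the transport term.

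The substantive step is to handle $I_{\mathrm{tr}}:=\int_{E^0} v\cdot\nabla_x f(k,x,v)\,\pi_k(x)p_k(v)\,\dif x\,\dif v$ by integration by parts. For fixed $k$ and $v$, viewing $v$ as a constant vector field on $U_k$, the product rule gives $(v\cdot\nabla_x f)\pi_k=\nabla_x\cdot(f\pi_k v)-f\,(v\cdot\nabla_x\pi_k)$, and the divergence theorem on $U_k$ converts the first summand into $\int_{\partial U_k\cap N_k} f\,\pi_k\,\langle n,v\rangle\,\dif\sigma$ (restricted to $N_k$ since $n$ is defined only there). Multiplying by $p_k(v)$, integrating over $\mathcal V^k$, summing over $k$, and using Fubini, recovers precisely the first and third terms of the statement, since $\nabla_x\mu=p_k(v)\nabla_x\pi_k$ on each leaf of the product.

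The chief difficulty is to justify the divergence theorem rigorously under the weak boundary regularity permitted by Assumptions~\ref{ass:space} and \ref{ass:space-v}. The informal summary in Assumption~\ref{ass:space-informal} decomposes $\partial U_k$ into finitely many smooth pieces on which the classical divergence theorem applies, while the smallness of the corner locus and the smallness of the tangency locus (controlled by the projection-of-tangent-points hypothesis) ensure that the pathological parts of $\partial U_k$ are $\sigma$-negligible and contribute nothing. One must also check that the surface integral is finite for $p_k$-a.e.\ $v$, so that Fubini is legitimate; this combines the continuity of $\pi_k$ up to $\bar U_k$ (Assumption~\ref{ass:pi-interior}), the $L^1$-control of $v\cdot\nabla_x\pi_k$, and the tangency bound. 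Finally, the boundary values $f(k,x,v)$ at exit points where $\langle n(x),v\rangle>0$ must be interpreted as the one-sided limit $\lim_{t\downarrow 0} f(k,x-tv,v)$; this limit exists and is unambiguous because $f\in\Dom(A)\subset\Dom(\textswab{A})$ is continuous along the flow, and the flow reaches such boundary points only from within $E^0$.
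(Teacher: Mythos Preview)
Your plan matches the paper's approach at the strategic level: pass to the extended generator, isolate the jump term (which is $\mu$-integrable by hypothesis), integrate the transport term by parts in $x$ for fixed $k$ and $v$, then Fubini over $v$ and sum over $k$. This is exactly the skeleton of the paper's Appendix proof.

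There is, however, a genuine technical gap. You write the product rule as $(v\cdot\nabla_x f)\pi_k=\nabla_x\cdot(f\pi_k v)-f\,(v\cdot\nabla_x\pi_k)$ and then invoke ``the divergence theorem on $U_k$''. The classical divergence theorem requires the vector field $f\pi_k v$ to be $C^1$ (or at least $W^{1,1}$) in $x$. But $f\in\Dom(A)\subset\Dom(\textswab A)$ only guarantees, via Davis's characterisation of $\Dom(\textswab A)$, that $t\mapsto f(k,x+tv,v)$ is absolutely continuous for each fixed $v$; there is no reason for $f$ to be differentiable in directions orthogonal to $v$, so $\nabla_x f$ and the full divergence need not exist as such. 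The paper therefore does \emph{not} use a multivariate divergence theorem. Instead it proves a bespoke \emph{directional} integration-by-parts result (Proposition~\ref{prop:integration-by-part-general} and Theorem~\ref{thm:integrationbyparts}): slice $U_k$ into lines $\{z+tv:t\in\R\}$ indexed by $z\in v^\perp$, apply one-dimensional integration by parts on each line (valid under absolute continuity alone), and reassemble via Fubini. The hypotheses in Assumptions~\ref{ass:space} and~\ref{ass:space-v}---in particular the finiteness of $(x+\R v)\cap\partial U_k$ and the smallness of the projection of the tangency set onto $v^\perp$---are tailored to make this slicing argument go through, not to rescue a Gauss--Green theorem on a rough domain. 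Your discussion of boundary geometry is on the right track, but the limited regularity of $f$ itself is the other, equally essential, half of the difficulty, and your divergence-theorem formulation hides it.
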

\begin{proof}[Proof (sketch).]
The outline of the proof is that we first use the definition of the generator acting on a function $Af$ and then rearrange the resulting integral using integration by parts. If our PDMP has no boundary this would give just the first two terms on the right-hand side \cite[]{fearnhead2018piecewise,vanetti2017piecewise}. The effect of the boundaries is to introduce the additional terms when performing integration by parts.
For full details of the proof, see Section \ref{subsec:proof-th-inth}.
\end{proof}

\section{PDMP samplers for piecewise continuous densities} \label{sec:ctsbyparts}
Let $\pi$ be a density on $\mathbb{R}^d$ and $\{U_k:k\in K\}$ be a finite collection of disjoint open subsets of $\mathbb{R}^d$, such that $\overline{\cup_k U_k}=\R^d$, which satisfy our technical Assumptions~\ref{ass:space} and \ref{ass:space-v}. We assume that $\pi$ is $C^1$ on each $\bar{U}_k$. We are now in the same setting as the previous section. 

Let $\partial U = \bigcup_{k\in K} (\partial U_k \cap N_k)$ be the union of the boundaries, i.e. the set of discontinuities of $\pi$. We consider now only points on the boundary where exactly two sets $U_{k_1}, U_{k_2}$ intersect, and where the respective normals are well-defined; the set of points where more sets intersect or the normal is ill-defined form a null set by assumption and thus have no impact on the resulting invariant distribution.

In the following we will restrict ourselves to transition kernels on the boundary that keep the location, $x$, unchanged and only update the velocity, $v$. 

For each such $x$ in $\partial U$, there exists $k_1(x)$ and $k_2(x)$ such that $x \in \bar{U}_{k_1}$ and $x \in \bar{U}_{k_2}$. We will define the ordering of the labels such that $\pi_{k_1}(x)<\pi_{k_2}(x)$. 
Let $n(x)$ be the outer normal for $U_{k_1}$. Thus this is the normal that points to the region $k_2(x)$, which is the region that has higher density, under $\pi$, at $x$. 

Let $\mathcal{V}_x^+ = \{v \in \mathcal{V} | \langle v,n(x) \rangle > 0\}$ and  $\mathcal{V}_x^- = \{v \in \mathcal{V} | \langle v,n(x) \rangle < 0\}$. Thus $\mathcal{V}_x^+$ is the set of velocities that would move the position $x$ into $k_2(x)$, thereby increasing the density under $\pi$, and $\mathcal{V}_x^+$ is the set of velocities that move the position into $k_1(x)$.

For $x\in \partial U$, let $l_x$ be the following (unnormalized) density on $\mathcal{V}$ 
\[
    l_x(v) =\left\{ \begin{array}{cl} |\langle n(x),v \rangle| p(v) \pi_{k_2(x)}(x) & \forall v\in \mathcal{V}_x^+,\\
    |\langle n(x),v \rangle| p(v) \pi_{k_1(x)}(x) & \forall v\in \mathcal{V}_x^-. \end{array} \right.
\]
This is just proportional to the density $p(v)$ weighted by the size of the velocity in the direction of the normal $n(x)$ and weighted by the density at $x$ either in the region, $k_1(x)$ or $k_2(x)$, that the velocity is moving toward.

Let $Q'_x$ be the Markov kernel for the velocity obtained by flipping the velocity and then applying Markov kernel $Q$. Since we assume $Q$ only changes the velocity, we have $Q'_x(\dif v'|v)=Q(\dif x,\dif v'|x,-v)$.

\begin{theorem}
Assume that for all $v\in \mathcal{V}_x$ that $p(v)=p(-v)$, and that the transition kernel $Q$ only changes the velocity, and define the family of kernels, $Q'_x$ for $x\in \delta U$ as above. 
Further, assume that
     \[-\int_{E^0} f(k,x,v) \nabla_x \mu \cdot v \dif x \dif v +\int_{E^0} \lambda(z) [Qf(z) - f(z)] \dif\mu = 0,\]
     and that 
     \begin{equation} \label{eq:boundary_condition}
     \mbox{ $\forall x \in \partial U$, $l_x$ is an invariant density of $Q'_x$.}
     \end{equation}
    Then $\int_E Af \dif \mu = 0$
    for all $f \in \mathcal{D}(A)$, and $\mu$ is the invariant distribution of the process.
    \label{thm:bdy_inv}
\end{theorem}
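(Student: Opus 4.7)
The plan is to apply Theorem~\ref{th:intAf} to write $\int_E Af\, d\mu$ as the sum of the two interior integrals (which cancel by the first displayed hypothesis of the present theorem) and the boundary integral
\[
B := \sum_{k\in K}\int_{\partial U_k\cap N_k}\int_{\mathcal{V}^k} f(k,x,v)\,\pi_k(x)\,\langle n(x),v\rangle\, d\sigma(x)\, dp(v).
\]
It then suffices to show that $B=0$ for every $f\in \mathcal{D}(A)$; once this is done, the remaining hypotheses of Theorem~\ref{th:distr-invariant-active} follow from our standing assumptions on $\pi$, $\lambda$, $Q$ and the flow, and that theorem delivers the invariance of $\mu$.

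I would analyse $B$ locally at each interface point $x\in \partial U$. By assumption, $\sigma$-almost every such $x$ lies on exactly two adjacent regions $U_{k_1(x)}$ and $U_{k_2(x)}$, whose outward normals are $\pm n(x)$; combining the two boundary contributions at $x$ gives a single integral $B(x)$ over $\mathcal{V}=\mathcal{V}_x^+\cup \mathcal{V}_x^-$ involving one-sided limits of $f$ from each side of the interface. The central step then uses the boundary identity $f(z)=(Qf)(z)$ on the active boundary $\Gamma$ that every $f\in \mathcal{D}(A)\subset \mathcal{D}(\textswab{A})$ satisfies, via Davis's explicit characterisation \cite[Theorem~26.14]{Davis1993}. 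After substituting this identity to eliminate the pre-jump one-sided limits of $f$ in $B(x)$, performing the change of variable $v\mapsto -v$ (justified by the symmetry $p(v)=p(-v)$) to turn $Q$ into $Q'_x$, and collecting the factors $\pi_{k_i(x)}(x)\,|\langle n(x),v\rangle|\,p(v)$ on each of $\mathcal{V}_x^+$ and $\mathcal{V}_x^-$ into $l_x(v)$, the expression $B(x)$ should collapse to
\[
B(x)=\int_{\mathcal{V}}\bigl[(Q'_x g)(v)-g(v)\bigr]\,l_x(v)\,dv,
\]
where $g(v)$ denotes the post-jump boundary limit of $f$, i.e.\ the limit of $f$ into whichever of $U_{k_1(x)}, U_{k_2(x)}$ is entered under velocity $v$. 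The $Q'_x$-invariance of $l_x$ assumed in~\eqref{eq:boundary_condition} then makes this integral vanish pointwise in $x$, whence $B=0$.

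The main obstacle will be the bookkeeping inside the central step: correctly matching the two opposite-normal contributions at each interface, tracking which one-sided limit of $f$ plays the role of the pre-jump value (on which $f=Qf$ may be used) and which is the post-jump value $g$, and confirming that after the $v\mapsto -v$ substitution the $\pi$-indices flip in exactly the way required so that the remaining weight is \emph{exactly} $l_x$ on each half of $\mathcal{V}$. It is here that the deliberate asymmetry in the definition of $l_x$---using $\pi_{k_2(x)}$ on $\mathcal{V}_x^+$ and $\pi_{k_1(x)}$ on $\mathcal{V}_x^-$---and the velocity flip baked into $Q'_x$ combine cleanly with the invariance hypothesis~\eqref{eq:boundary_condition}. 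Once the algebra is arranged, the collapse via $Q'_x$-invariance is immediate and the rest is a routine invocation of Theorem~\ref{th:distr-invariant-active}.
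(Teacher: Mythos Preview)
Your proposal is correct and follows essentially the same route as the paper: apply Theorem~\ref{th:intAf}, kill the interior terms by hypothesis, at each interface point combine the $k_1$ and $k_2$ contributions, use the Davis boundary condition $f=Qf$ on $\Gamma$ to replace the pre-jump limits, substitute $v\mapsto -v$ using $p(v)=p(-v)$, and recognise the resulting identity as precisely the $Q'_x$-invariance of $l_x$ applied to the post-jump function (the paper's $f'$ is your $g$). The only cosmetic difference is that the paper writes the final step as $\int f'\,dl_x=\int (Q'_x f')\,dl_x$ rather than $\int[(Q'_xg)-g]\,l_x=0$, and leaves the concluding appeal to Theorem~\ref{th:distr-invariant-active} implicit, whereas you spell it out.
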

\begin{proof}
Starting from Theorem~\ref{th:intAf}, for all $f\in \mathcal{D}(A)$:
\begin{align*}
\int_{E} Af \dif\mu = &-\int_{E^0} f(k,x,v) \nabla_x \mu \cdot v \dif x \dif v +\int_{E^0} \lambda(z) [Qf(z) - f(z)] \dif \mu\\
    &+\sum_{k\in K}
    \int_{x\in \partial U_k \cap N_k,v\in \mathcal{V}} f(k,x,v) \pi_k(x) \langle n(x),v \rangle \dif \sigma(x) \dif p(v).
\end{align*}
By assumption, $-\int_{E^0} f(k,x,v) \nabla_x \mu \cdot v \dif x \dif v +\int_{E^0} \lambda(z) [Qf(z) - f(z)] \dif\mu = 0$, and so we simplify the integral of $Af$ to:
\[
    \int_{E} Af \dif \mu =\sum_{k\in K} \int_{x\in \partial U_k \cap N_k,v\in \mathcal{V}} f(k,x,v) \pi_k(x) \langle n(x),v \rangle \dif\sigma(x) \dif p(v).
\]

To simplify notation, in the rest of the proof we write $k_1$ for $k_1(x)$ and $k_2$ for $k_2(x)$
We rewrite the previous equation:
\begin{align*}
    \int_{E} Af \dif \mu &=\int_{x\in \partial U} \int_{v \in\mathcal{V}_x^+} \left[ f(k_2,x,v) \pi_{k_2}(x) - f(k_1,x,v) \pi_{k_1}(x) \right] |\langle n(x),v \rangle| \dif p(v) \dif \sigma(x)\\
    &-\int_{x\in \partial U} \int_{v \in\mathcal{V}_x^-} \left[ f(k_2,x,v) \pi_{k_2}(x) - f(k_1,x,v) \pi_{k_1}(x) \right] |\langle n(x),v \rangle| \dif p(v) \dif\sigma(x).
\end{align*}
Using that if $v\in \mathcal{V}_x^-$ then $-v \in \mathcal{V}_x^+$ we can rewrite the right-hand side as
\begin{align*}
&    \int_{x\in \partial U} \left(
\int_{v \in\mathcal{V}_x^+} \left[ f(k_2,x,v) \pi_{k_2}(x) - f(k_1,x,v) \pi_{k_1}(x) \right] |\langle n(x),v \rangle| \dif p(v)
    \right. \\
& \left. - \int_{v \in\mathcal{V}_x^+} \left[ f(k_2,x,-v) \pi_{k_2}(x) - f(k_1,x,-v) \pi_{k_1}(x) \right] |\langle n(x),v \rangle| \dif p(-v)\right) \dif\sigma(x).
\end{align*}
A sufficient condition for $\int_E Af \dif \mu = 0$ is that for all $x$ the integral over $v$ in the brackets is 0.

Any $f \in \mathcal{D}(A)$ satisfies the boundary condition \eqref{eq:bdy_cond} on $\Gamma$. For $v \in \mathcal{V}^+_x$, we have $(k_1,x,v) \in \Gamma$ and $(k_2,x,-v) \in \Gamma$, hence,
\begin{align*}
    f(k_1,x,v) &= \int f(z') Q(\dif z'|(k_1,x,v)),\\
    f(k_2,x,-v) &= \int f(z') Q(\dif z'|(k_2,x,-v)).
\end{align*}
Thus our sufficient condition for $\int_E Af \dif \mu = 0$ becomes
\begin{align*}
    &\int_{\mathcal{V}_x^+} [f(k_2,x,v) \pi_{k_2}(x) -  Q f(k_1,x, v) \pi_{k_1}(x)] |\langle n(x),v \rangle|\dif p(v) \\
    &= \int_{\mathcal{V}_x^+} [Qf(k_2,x,-v) \pi_{k_2}(x) - f(k_1,x,-v) \pi_{k_1}(x)] |\langle n(x),v \rangle| \dif p(-v).
\end{align*}
Using again the fact that if $v\in \mathcal{V}_x^+$ then $-v \in \mathcal{V}_x^-$, this condition can be rewritten as: 
\begin{equation*}
    \begin{split}
         \int_{\mathcal{V}^+_x} f(k_2,x,v)&\pi_{k_2}(x) |\langle n(x),v \rangle|\dif p(v)
+ \int_{\mathcal{V}^-_x} f(k_1,x,v)\pi_{k_1}(x) |\langle n(x),v \rangle|\dif p(v) \\
&= \int_{\mathcal{V}^+_x} Qf(k_2,x,-v)\pi_{k_2}(x) |\langle n(x),v \rangle|\dif p(-v)\\
&\quad + \int_{\mathcal{V}^-_x} Qf(k_1,x,-v)\pi_{k_1}(x) |\langle n(x),v \rangle|\dif p(-v).
    \end{split}
\end{equation*}
We can then write this in terms of $l_x$ and $Q'_x$ by
introducing a function $f'(x,v)$ that is defined as
\[
f'(x,v)=f(k_1,x,v) \mbox{ if $v\in\mathcal{V}_x^-$, and }
f'(x,v)=f(k_2,x,v) \mbox{ if $v\in\mathcal{V}_x^+$}.
\]
Then, using $p(v)=p(-v)$ and the definitions of $l_x$ and $Q'_x$, our sufficient condition becomes
\[ \int_{\mathcal{V}_x} f'(x,v) \dif l_x(v) =
\int_{\mathcal{V}_x} \left(\int_{\mathcal{V}_x} f'(x,v')Q'_x(\dif v'|v) \right) \dif l_x(v). 
\]
This is true if $l_x$ is an invariant density of $Q'_x$.
\end{proof}

\subsection{The case of restricted domains}
\label{subsec:compare_restricted}
One special case of our general construction is the situation where the target density $\pi$ on $\R^d$ is only compactly supported. This particular scenario was already considered in the work of \cite{BIERKENS2018148}. We briefly compare our respective results in this setting.

Firstly, the key boundary condition of \cite{BIERKENS2018148}, their Equation (5), accords with our condition (\ref{eq:boundary_condition}) on $l_x$ in Theorem~\ref{thm:bdy_inv}. However, a full rigorous proof of their invariance result \cite[Proposition 1]{BIERKENS2018148} is not presented; in the Supplementary material of \cite{BIERKENS2018148}, they defer the full proof to future work. 

Finally, in the work of \cite{BIERKENS2018148}, another condition is required, their Equation (4). We do not have any equivalent in our work, the reason for which can be found in the proof provided in the supplementary material of \cite{BIERKENS2018148}: the boundary condition presented in equation (2) of the Supplementary material of \cite{BIERKENS2018148} is in fact only required to hold on the \textit{exit boundary}, which we have denoted $\Gamma$ (\textit{c.f.} \cite[Theorem 26.14, Condition 2]{Davis1993}), rather than on the entire boundary.

\section{Boundary kernels for usual PDMP samplers}

We give here possible Markov kernels for the Bouncy Particle Sampler, the Zig-Zag sampler, and the Coordinate Sampler. Since the condition of Theorem \ref{thm:bdy_inv} only depends on the velocity distribution, any two processes that share the same velocity distribution can use the same boundary Markov kernels. We present two approaches to constructing appropriate kernels on the boundary.

\subsection{Sampling $l$ using Metropolis--Hastings} \label{sec:MH}

Recall from Theorem \ref{thm:bdy_inv} that a valid kernel for the velocity when we hit a boundary can be constructed as follows: first, construct a transition kernel $Q'_x$ which leaves $l_x$ invariant; and if the current velocity is $v$, simulate a new velocity from $Q'_x(\cdot|-v)$. That is we can simulate a new velocity by (i) flipping the velocity, and (ii) applying $Q'_x$.

The simplest choice of $Q'_x$ is just the identity map, i.e. a kernel that keeps the velocity. However this would correspond to a transition kernel on the boundary which simply flips the velocity, thus forcing the PDMP to retrace its steps. Where possible, we can improve on this by choosing to be $Q'_x$ a kernel which samples from $l_x$, though it should be noted that this choice may be difficult to implement.

When $\mathcal{V}$ is bounded, an alternative is to define $Q'_x$ as a Metropolis--Hastings kernel \cite{dunson2020hastings} targeting $l_x$, with proposals from a uniform sampler of $\mathcal{V}$. The algorithm starting from $v$ then proceeds as follows:
\begin{enumerate}
    \item Sample $v'$ uniformly in $\mathcal{V}$.
    \item Accept $v^*=v'$ with probability  $\alpha = \frac{l_x(v')}{l_x(v)}$, otherwise set $v^*=v$.
\end{enumerate}
Of course, it is also possible to iterate the Metropolis--Hastings kernel several times to get a good sample from $l_x$ at a reasonable cost. 

\subsection{Limiting behaviours}

A natural strategy for constructing the transition kernel for the velocity at a boundary is to consider the limiting behaviour of the sampler for a family of continuous densities which tend to a piecewise-discontinuous density in an appropriate limit. We will do this for a density $\pi$ with one discontinuity on a hyperplane with normal $n$: $\pi(x) = \pizero 1_{\langle x,n\rangle < 0} + \pione 1_{\langle x,n\rangle > 0}$ with $\pizero < \pione$. 
In the following we will assume $\pizero>0$, but the extension of the arguments to the case $\pizero=0$ is straightforward.

We can approximate $\pi$ by a continuous density $\pi_k$ such that $\nabla \log(\pi_k)$ is piecewise constant:
\[
    \pi_k(x) = \pizero 1_{\langle x,n\rangle \in [-\infty,-C/k]} + \pione 1_{\langle x,n\rangle \in [0,\infty]} + \pione \exp\{k\langle x,n\rangle\} 1_{\langle x,n\rangle \in [-C/k,0]},
\]
where $C=\log(\pione/\pizero)$.
As $k\rightarrow \infty$ we can see that $\pi_k$ converges to $\pi$. In the following we will call the region where $\langle x,n\rangle \in [-1/k,0]$ the boundary region of $\pi_k$, as this is approximating the boundary defined by the discontinuity in $\pi$.

The advantage of using the densities $\pi_k$ is that the resulting behaviour of standard PDMP samplers is tractable, and, as we will see, the distribution of the change in velocity from entering to exiting the boundary region of $\pi_k$ will not depend on $k$. The effect of increasing $k$ is just to reduce the time spent passing through the boundary region -- and in the limit as $k\rightarrow\infty$ this becomes instantaneous.

We consider this limiting behaviour for BPS, the Coordinate Sampler, and Zig-Zag. Whilst we derive the transition distribution for the velocity in each case from this limiting behaviour, we will demonstrate that each distribution is valid for the corresponding sampler directly by showing that it satisfies our condition (\ref{eq:boundary_condition}). The proofs of the propositions in this section are deferred to Appendix~\ref{app:Q_proofs}.

\subsubsection{Limiting behavior of the Bouncy Particle Sampler} \label{sec:BPSlim}

Consider the BPS dynamics for sampling from $\pi_k$ for a trajectory that enters the boundary region, and ignore any refresh events. If the state of the BPS is $(x,v)$ then dynamics are such that events occur at a rate $\max\{0,-\langle v, \nabla \log \pi_k(x) \rangle\}$, and at an event the velocity is reflected in $\nabla \log \pi_k(x)$. Whilst in the boundary region, $\nabla \log \pi_k(x) = k n$.

For any $v$ such that $\langle v,n\rangle > 0$, it is clear that $\lambda(x,v) = 0$ for all $x$. Hence, the trajectory through the boundary region will be a straight line.

Let $v$ be such that $\langle v,n\rangle < 0$. Without loss of generality assume that the trajectory enters the boundary region at $t=0$ with $x_0 = 0$. If no jumps occurs, the trajectory will exit the boundary region at some time $t_e$, where  $\langle x_{t_{e}},n\rangle = -C/k$, which implies $t_{e} = -C/(k \langle v,n\rangle)$.  For such a trajectory, the Poisson rate whilst passing through the boundary region is  $\lambda = -k \langle v,n\rangle$. Remembering that $C=\log(\pione/\pizero)$, the probability of a trajectory passing through the region in a straight line is
\[
\exp\{-\lambda t_{e}\}=\exp\left\{-\log\left(\frac{\pione}{\pizero}\right) \right\}=\frac{\pizero}{\pione},
\]
which does not depend on $k$.

Finally, the probability of an event that changes the velocity in the boundary region is thus $1-\pizero/\pione$. If there is event, the velocity reflects in the normal and becomes $v'=v - 2\langle v,n\rangle$. As $\langle v',n\rangle>0$ no further events will occur whilst passing through the boundary region.

Hence the probability transition kernel assigns probabilities 
\[
Q(x,v'|x,v)=\left\{\begin{array}{cl}
1 &
v'=v \mbox{ and } \langle v,n\rangle>0,\\
\pizero/\pione & v'=v \mbox{ and } \langle v,n\rangle<0,\\
1-\pizero/\pione & v'=v - 2 \langle n,v\rangle n  \mbox{ and } \langle v,n\rangle<0.
\end{array}
\right.
\]

If we translate this into the corresponding transition kernel at a general discontinuity, for a trajectory that hits the boundary defined by the continuity at a point with unit normal $n=n(x)$
 then
\[
Q_{\BPS}(x,v'|x,v)=\left\{\begin{array}{cl}
1 &
v'=v \mbox{ and } v\in\mathcal{V}_x^+,\\
\pi_{k_1(x)}(x)/\pi_{k_2(x)}(x) & v'=v \mbox{ and } v\in\mathcal{V}_x^-,\\
1-\pi_{k_1(x)}(x)/\pi_{k_2(x)}(x) & v'=v - 2 \langle n,v\rangle n \mbox{ and } v\in\mathcal{V}_x^-.
\end{array}
\right.
\]
That is, if the trajectory is moving to the region of lower probability density, then it passes through the discontinuity with a probability proportional to the ratio in the probability densities. Otherwise, it reflects off surface of discontinuity.

\begin{proposition} \label{prop:BPS}
The transition kernel of the velocity, $Q'_x$, derived from  $Q_{\BPS}$ satisfies (\ref{eq:boundary_condition}).
\end{proposition}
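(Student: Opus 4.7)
The plan is to unwind the definition of $Q'_x$ explicitly and then verify that $l_x$ is invariant via detailed balance. From $Q'_x(\cdot \mid v) = Q_{\BPS}(\cdot \mid x, -v)$, I would first describe $Q'_x$ as a mixture of two deterministic involutions. Write $R(v) := -v + 2\langle v, n\rangle n$ for the reflection of $v$ through the line $\mathrm{span}(n)$ (equivalently, negation composed with specular reflection in $\{n\}^\perp$). A quick case analysis gives: if $v \in \mathcal{V}_x^+$ then $-v \in \mathcal{V}_x^-$, so $Q_{\BPS}$ either leaves $-v$ fixed (probability $\pi_{k_1}/\pi_{k_2}$) or reflects $-v$ to $-v - 2\langle n, -v\rangle n = R(v) \in \mathcal{V}_x^+$ (probability $1 - \pi_{k_1}/\pi_{k_2}$); if $v \in \mathcal{V}_x^-$ then $-v \in \mathcal{V}_x^+$ and $Q_{\BPS}$ is the identity with probability one. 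Thus $Q'_x$ sends $v \in \mathcal{V}_x^+$ to $-v$ or $R(v)$ with the above weights, and sends $v \in \mathcal{V}_x^-$ deterministically to $-v$.

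The second ingredient I would invoke is the symmetry of the BPS velocity law $p$: whether $p$ is standard Gaussian on $\mathbb{R}^d$ or uniform on the sphere, it is invariant under all orthogonal transformations, and in particular $p(v) = p(-v) = p(R(v))$. Likewise $|\langle n, \cdot\rangle|$ is preserved by both negation and $R$, while negation swaps $\mathcal{V}_x^+$ and $\mathcal{V}_x^-$ and $R$ preserves each of them. From these one reads off the two identities that drive the proof: for $v \in \mathcal{V}_x^+$, $l_x(R(v)) = l_x(v)$ (both sides sit in $\mathcal{V}_x^+$ and carry the factor $\pi_{k_2}(x)$), and $l_x(-v) = (\pi_{k_1}/\pi_{k_2}) \cdot l_x(v)$ (since $-v \in \mathcal{V}_x^-$ replaces $\pi_{k_2}$ by $\pi_{k_1}$).

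With this in hand I would verify detailed balance of $Q'_x$ w.r.t.\ $l_x$ in the two nontrivial transitions. For $v \in \mathcal{V}_x^+$ and $v' = -v$: the forward mass is $l_x(v) \cdot \pi_{k_1}/\pi_{k_2}$, and since $v' \in \mathcal{V}_x^-$ sends deterministically to $-v' = v$, the reverse mass is $l_x(-v)$; these agree by the second identity above. For $v \in \mathcal{V}_x^+$ and $v' = R(v) \in \mathcal{V}_x^+$: the forward mass is $l_x(v)(1 - \pi_{k_1}/\pi_{k_2})$, and since $R$ is an involution $v'$ returns to $v$ under the reflection branch with the same weight $1 - \pi_{k_1}/\pi_{k_2}$, giving $l_x(R(v))(1 - \pi_{k_1}/\pi_{k_2})$; these agree by the first identity. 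Reversibility with respect to $l_x$ then immediately yields invariance, which is exactly condition \eqref{eq:boundary_condition}.

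The main obstacle is purely bookkeeping: one has to track carefully which of the half-spaces $\mathcal{V}_x^\pm$ contains the intermediate and final velocities after the flip-then-$Q_{\BPS}$ composition, since the piecewise definition of $l_x$ swaps the factor $\pi_{k_1} \leftrightarrow \pi_{k_2}$ precisely across that boundary; a clean way to handle this is to phrase the whole argument in terms of the involutions $v \mapsto -v$ and $v \mapsto R(v)$ and their effect on the two factors of $l_x$, as above. No change of variables or integration by parts is required beyond the symmetry of $p$ and the observation that both maps are measure-preserving involutions.
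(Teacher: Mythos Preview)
Your argument is correct and uses the same ingredients as the paper: you derive the explicit form of $Q'_x$, invoke the spherical symmetry of $p$ together with the invariance of $|\langle n,\cdot\rangle|$ under $v\mapsto -v$ and $v\mapsto R(v)$, and then conclude. The only organizational difference is that you verify the (slightly stronger) detailed balance condition $l_x(v)Q'_x(v'|v)=l_x(v')Q'_x(v|v')$ for each transition pair, whereas the paper checks the invariance equation $l_x(v')=\sum_v Q'_x(v'|v)l_x(v)$ directly by enumerating the preimages of each $v'$; the underlying computations are the same. Your detailed-balance phrasing has the small advantage of making the role of the involution $R\circ R=\mathrm{id}$ explicit and of matching the style the paper itself adopts later for the Zig-Zag kernel.
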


This result holds for either implementation of the Bouncy Particle Sampler, i.e. where the distribution of the velocity is uniform on the sphere, or is an isotropic multivariate Gaussian. Examination of the proof of the proposition shows that we only require that $p(v)$ is spherically symmetric.

\subsubsection{Limiting behavior of the Coordinate Sampler} \label{sec:CSlim}

 In the coordinate sampler the velocity is always in the direction of one of the coordinates of $x$, and we will denote the set of $2d$ possible velocities by $\mathcal{V}$. The dynamics of the coordinate sampler are similar to those for BPS except that the transition kernel at an event is different. At an event the probability of the new velocity being $v'\in\mathcal{V}$ is proportional to $\max\{0,\langle v',\nabla \log \pi_k(x)\rangle\}$.
 
 The calculations for the transition kernel of the coordinate sampler for a trajectory that enter the boundary region of $\pi_k$ is similar to that for the BPS, except that, if there is an event, the distribution of the new velocity changes to that for the coordinate sampler. 
 
The resulting probability transition kernel, expressed for a general discontinuity is:
\[
Q_{\CS}(x,v'|x,v) = \left\{\begin{array}{cl} 1 & v'=v \mbox{ and } v\in \mathcal{V}_x^+ \\
\pi_{k_1(x)}(x)/\pi_{k_2(x)}(x) & v'=v \mbox{ and } v\in\mathcal{V}_x^-,\\
(1-\pi_{k_1(x)}(x)/\pi_{k_2(x)}(x))\frac{\langle v',n \rangle}{K} & v'\in \mathcal{V}_x^+ \mbox{ and } v\in\mathcal{V}_x^-,
\end{array}
\right.
\]
where $K=\sum_{v\in\mathcal{V}_x^+} \langle v,n \rangle$ is a normalising constant for the distribution of the new velocity if it changes.

That is, a trajectory moving to the higher probability region is unaffected by the discontinuity. For a trajectory moving to a lower probability region it either passes through the discontinuity, or bounces. The bounce direction is chosen at random from $v' \in \mathcal{V}_x^+$ with probability equal to the component of $v'$ in the direction of the normal at the discontinuity, $n$.

\begin{proposition} \label{prop:CS}
The transition kernel of the velocity, $Q'_x$, derived from  $Q_{\CS}$ satisfies (\ref{eq:boundary_condition}).
\end{proposition}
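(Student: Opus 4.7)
The plan is to verify directly that $l_x$ is left invariant by $Q'_x$ by partitioning $\mathcal{V}$ into $\mathcal{V}_x^+$ and $\mathcal{V}_x^-$ and tracing where mass is sent. First I would unpack $Q'_x(\cdot|v) = Q_{\CS}(\cdot|x,-v)$ from the given $Q_{\CS}$. For $v\in \mathcal{V}_x^-$, we have $-v\in \mathcal{V}_x^+$, so $Q_{\CS}$ fires the first branch and $Q'_x$ is just the deterministic flip $v\mapsto -v$. For $v\in \mathcal{V}_x^+$, we have $-v\in \mathcal{V}_x^-$, so $Q'_x$ sends $v$ to $-v\in \mathcal{V}_x^-$ with probability $\pi_{k_1(x)}(x)/\pi_{k_2(x)}(x)$ and otherwise picks a new velocity $v'\in \mathcal{V}_x^+$ with probability $(1-\pi_{k_1(x)}(x)/\pi_{k_2(x)}(x))\langle v',n\rangle/K$.

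Next I would check the flow equation $\sum_v l_x(v)Q'_x(B|v)=l_x(B)$ separately for $B\subset \mathcal{V}_x^-$ and $B\subset \mathcal{V}_x^+$. For $B\subset \mathcal{V}_x^-$, only the deterministic flip from $-B\subset \mathcal{V}_x^+$ contributes, giving
\[
\sum_{v\in -B} l_x(v)\cdot \frac{\pi_{k_1(x)}(x)}{\pi_{k_2(x)}(x)} = \sum_{v\in -B}|\langle n,v\rangle|p(v)\pi_{k_1(x)}(x),
\]
and a change of variables $v\mapsto -v$, together with $p(-v)=p(v)$, recovers $l_x(B)$ exactly. This reuses the same calculation that worked for the BPS case.

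For $B\subset \mathcal{V}_x^+$, two contributions must combine: the flip from $-B\subset \mathcal{V}_x^-$ (which yields $\sum_{v'\in B}\pi_{k_1(x)}(x)|\langle n,v'\rangle|p(v')$ by the same change of variables) and the resampling term coming from $v\in \mathcal{V}_x^+$. The key computation here is to evaluate $l_x(\mathcal{V}_x^+)=p\,\pi_{k_2(x)}(x)\,K$, using the definition of $K=\sum_{v\in \mathcal{V}_x^+}\langle v,n\rangle$ and $p(v)=p$ constant on $\mathcal{V}$. The resampling contribution to $v'\in B$ is then
\[
l_x(\mathcal{V}_x^+)\bigl(1-\pi_{k_1(x)}(x)/\pi_{k_2(x)}(x)\bigr)\frac{\langle v',n\rangle}{K}=\bigl(\pi_{k_2(x)}(x)-\pi_{k_1(x)}(x)\bigr)|\langle n,v'\rangle|p(v').
\]
Adding the two contributions and summing over $v'\in B$ gives precisely $l_x(B)$.

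The argument is a direct bookkeeping verification, so no real obstacle arises; the only subtle point is remembering that $K$ is the normalising constant for the resampling distribution on $\mathcal{V}_x^+$, which also equals, up to the factor $p\,\pi_{k_2(x)}(x)$, the total $l_x$-mass on $\mathcal{V}_x^+$. This alignment between the resampling weights and the restriction of $l_x$ to $\mathcal{V}_x^+$ is exactly what makes the Coordinate Sampler's velocity resampling rule satisfy the boundary condition, and is the conceptual content of the proposition.
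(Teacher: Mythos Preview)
Your proposal is correct and follows essentially the same approach as the paper's proof: both derive $Q'_x$ from $Q_{\CS}$ via the flip, then verify $l_x$-invariance by splitting into the cases $v'\in\mathcal{V}_x^-$ (handled exactly as in the BPS case) and $v'\in\mathcal{V}_x^+$ (flip contribution plus resampling contribution), with the key step being that constancy of $p$ and the definition of $K$ give $\sum_{v\in\mathcal{V}_x^+}|\langle n,v\rangle|p(v)=pK$, which makes the resampling term collapse. The only cosmetic difference is that you phrase things in terms of sets $B$ rather than pointwise values $v'$, which is equivalent here.
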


\subsubsection{Limiting behavior of the Zig-Zag Sampler} \label{sec:ZZlim}

For Zig-Zag the velocities are of the form $\{\pm1\}^d$. Given the positions, events occur independently for each component. That is if $v_i\in\{\pm1\}$ is the component of the velocity in the $i$ coordinate axis then this velocity will flip, i.e. change sign, at a rate $\max\{0,-v_i\partial \log \pi_k(x)/\partial x_i\}$. For the boundary region of $\pi_k(x)$ we have
\[
\frac{\partial \log \pi_k(x)}{\partial x_i} = k n_i,
\]
where $n_i$ is the $i$th component of the normal $n$.

If we consider the dynamics of Zig-Zag once it enters the boundary region of $\pi_k(x)$, then each velocity component with $v_in_i<0$ will potentially flip. Whilst travelling through the boundary region the rate at which such a $v_i$ will flip will be $-v_i n_i k$. Each component will flip at most once whilst the trajectory passes through the boundary region. 

The resulting dynamics are somewhat complicated, but can be easily simulated, using the following algorithm. 
\begin{enumerate}[label=\alph*)]
    \item \label{ZZ(a)} For $i=1,\ldots,d$ simulate $\tau_i$ as independent realisations of an exponential random variable with rate $k \max\{-n_iv_i,0\}$. If $n_iv_i\geq 0$ then set $\tau_i=\infty$.
    \item \label{ZZ(b)} Calculate the time $t^*$ at which we leave the boundary as the smallest value $t>0$ for which
    \[
    \sum_{i=1}^n v_in_i (t-2\max\{0,t-\tau_i\})= \pm C/k,
    \]
    or 
     \[
    \sum_{i=1}^n v_in_i (t-2\max\{0,t-\tau_i\})=0,
    \]
    \item \label{ZZ(c)} The new velocity has $v'_i=v_i$ if $\tau_i>t^*$ and $v'_i=-v_i$ otherwise.
\end{enumerate}

The key idea is that whilst the trajectory remains within the boundary region, each velocity component flips independently with its own rate. Step~\ref{ZZ(a)} then simulates the time at which each component of the velocity would switch. Step~\ref{ZZ(b)} then calculates, for the event times simulated in \ref{ZZ(a)}, what time, $t^*$ the trajectory will leave the boundary. There are two possibilities, the first corresponds to passing through the boundary, the second to bouncing back to the region where we started. For the first of these possibilities we have two possibilities, corresponding to $C/k$ and $-C/k$ to allow for the two possible directions with which we can enter the boundary region. Then in step~\ref{ZZ(c)} we calculate the velocity once the trajectory exits the boundary region -- using the fact that a velocity component will have flipped if and only if $\tau_i<t^*$.

It is simple to show that the distribution of the new velocity, $v'$, simulated in step~\ref{ZZ(c)} is independent of $k$, as the value of $k$ just introduces a scale factor into the definition of the event times, $\tau_i$, the and the exit time, $t^*$. Thus for a general general discontinuity, we define the probability transition kernel, $Q_{\ZZ}$ as corresponding to the above algorithm with $k=1$, with $n=n(x)$ and $C=\log(\pi_{k_2(x)}(x)/\pi_{k_1(x)}(x))$, the log of the ratio in probability density at $x$ for the two regions.

\begin{proposition} \label{prop:ZZ}
The transition kernel of the velocity, $Q'_x$ induced by $Q_{\ZZ}$ satisfies (\ref{eq:boundary_condition}).
\end{proposition}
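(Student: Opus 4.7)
The plan is to prove invariance of $l_x$ under $Q'_x$ by exploiting the construction of $Q_{\ZZ}$ in Section~\ref{sec:ZZlim} as a limit of Zig-Zag dynamics on the smooth approximations $\pi_k$. For each finite $k$, the Zig-Zag sampler targeting $\pi_k$ is a well-defined PDMP on $\R^d \times \{\pm 1\}^d$ which (by the standard generator calculation on each region where $\pi_k$ is $C^1$) admits $\mu_k(\dif x, \dif v) := \pi_k(x) p(v) \dif x \dif v$ as an invariant measure. Inside the boundary slab $S_k := \{x : \langle x, n\rangle \in (-C/k, 0)\}$ the log-gradient is the constant vector $kn$, and the Zig-Zag dynamics decouple across coordinates exactly in accordance with steps~\ref{ZZ(a)}--\ref{ZZ(c)} that define $Q_{\ZZ}$. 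The slab-passage map sending the entry velocity on $\partial S_k$ to the exit velocity is therefore precisely $Q_{\ZZ}$, and its law is independent of $k$.

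The next step is a flux-balance identification. I would compute, by a Palm-style disintegration of $\mu_k$, the stationary rate at which the process crosses any hyperplane $\{\langle x, n\rangle = s_0\}$ at a velocity $v$: this rate equals $\pi_k(s_0 n + x_\perp) p(v)\, |\langle n, v\rangle|$ per unit surface area, a standard consequence of free-transport flow combined with stationarity of $\mu_k$. Specialising to the two components of $\partial S_k$ identifies $l_x$ with the stationary outflow of the slab and its velocity-reversed counterpart $\tilde l_x(v) := l_x(-v)$ with the stationary inflow: at $s_0 = 0$ the outflow density $\pi_{k_2} p(v) \langle n, v\rangle$ on $\mathcal{V}_x^+$ matches $l_x$, and at $s_0 = -C/k$ the outflow $\pi_{k_1} p(v)\, |\langle n, v\rangle|$ on $\mathcal{V}_x^-$ matches $l_x$; the symmetric statements hold for $\tilde l_x$ on the entering side. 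Stationarity of $\mu_k$ together with the strong Markov property then forces the slab-passage map to push the inflow measure forward to the outflow measure, yielding
\[
    \int \tilde l_x(v)\, Q_{\ZZ}(v' \mid v) \dif v = l_x(v') \quad \text{for all } v' \in \mathcal{V}.
\]

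Finally I would convert this flux-balance identity into the required invariance. Using $Q'_x(\dif v' \mid v) = Q_{\ZZ}(\dif v' \mid -v)$, the symmetry $p(v) = p(-v)$, and the change of variables $u = -v$, we obtain
\[
    \int l_x(v)\, Q'_x(v' \mid v) \dif v = \int l_x(-u)\, Q_{\ZZ}(v' \mid u) \dif u = \int \tilde l_x(u)\, Q_{\ZZ}(v' \mid u) \dif u = l_x(v'),
\]
which is exactly~\eqref{eq:boundary_condition}.

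The main obstacle I anticipate is making the Palm-type identification rigorous: transferring stationarity of $\mu_k$ on $\R^d \times \{\pm 1\}^d$ into the statement that the slab-passage map sends $\tilde l_x$ to $l_x$ on the velocity marginal alone, after integrating out the in-slab displacement perpendicular to $n$ and handling trajectories that may experience multiple slab crossings in a finite window. A more pedestrian alternative, avoiding the auxiliary smooth process entirely, would be a direct combinatorial verification: for each pair $(v, v') \in \{\pm 1\}^d \times \{\pm 1\}^d$, identify the subset $J \subseteq \{i : v_i n_i < 0\}$ of coordinates that must flip, compute the joint law of the firing times $\tau_i$ and the exit time $t^*$ using the explicit formula $S_t = \sum_i v_i n_i\bigl(t - 2 \max\{0, t - \tau_i\}\bigr)$ from step~\ref{ZZ(b)}, and check the invariance identity term-by-term. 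This route is more laborious but self-contained.
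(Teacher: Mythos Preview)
Your approach is genuinely different from the paper's, and the obstacle you flag is precisely the place where the two diverge.

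The paper does not pass through invariance of an auxiliary smooth process at all. Instead it proves the stronger \emph{detailed balance} identity
\[
l_x(v)\,Q'_x(v'\mid v)=l_x(v')\,Q'_x(v\mid v'),\qquad v,v'\in\mathcal V,
\]
equivalently $l_x(-v)\,Q_{\ZZ}(v'\mid v)=l_x(v')\,Q_{\ZZ}(-v\mid -v')$, by an explicit path-reversal argument inside the slab. For a transition $v\to v'$ one partitions the coordinates into those that must flip ($\mathcal S_2$, say of size $m$), those that could flip but do not, and those that cannot; then $Q_{\ZZ}(v'\mid v)$ is written as an integral of a product of exponential densities for the flip times $\tau_{1:m}$ over the admissible region $\mathcal T$ determined by the exit-time function $t^*(\tau_{1:m})$. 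The key step is the change of variables $\tilde\tau_i=t^*(\tau_{1:m})-\tau_i$, which is exactly time-reversal of the slab trajectory and gives a bijection $\mathcal T\to\tilde{\mathcal T}$ onto the admissible region for the reversed transition $-v'\to -v$. The Jacobian of this affine map is computed via the matrix determinant lemma and equals $|\langle v,n\rangle/\langle v',n\rangle|$; substituting and using the defining relation for $t^*$ yields detailed balance directly. This is entirely self-contained and never invokes stationarity of the full process.

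Your flux-balance route is conceptually natural and, if completed, would give invariance (though not detailed balance). However the gap you identify is real, and there is a second one you do not mention: the model density $\pi_k$ of Section~\ref{sec:ZZlim} is constant on half-spaces and hence not integrable, so $\mu_k$ is a $\sigma$-finite but infinite invariant measure. The Palm/flux identification and the ``start from stationarity'' heuristic then require either an extension of the Palm calculus to infinite invariant measures, or a localisation (e.g.\ periodising perpendicular to $n$ and truncating far from the slab) that you would need to justify does not perturb the slab passage law. Both are fixable, but neither is free. Your ``pedestrian alternative'' is closer in spirit to what the paper actually does, but the organising idea you are missing there is precisely the time-reversal coupling $\tilde\tau_i=t^*-\tau_i$ and its Jacobian; without it the direct combinatorial verification becomes unwieldy.
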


\section{Comparison of samplers}

We now present some simulation results that aim to illustrate the theory, and show how different samplers and different choices of kernel at the discontinuities behave. We do this by considering a simple model for which it is easy to see the boundary behavior, with a target density
\[
    \pi(x) = \alpha_{\mathrm{in}} e^{-\frac{\|x\|^2}{2 \sigma_{\mathrm{in}}}} 1_{x \in [-1,1]^d} + \alpha_{\mathrm{out}} e^{-\frac{\|x\|^2}{2 \sigma_{\mathrm{out}}}} 1_{x \notin [-1,1]^d},
\]
which is Gaussian inside and outside the hypercube $[-1,1]^d$, with a discontinuous boundary on the hypercube.

For algorithms such as Zig-Zag and Coordinate Sampler, the choice of basis is extremely important. In particular, we expect the Zig-Zag Process to perform very well for product measures if the velocity basis is properly chosen. Hence we use a rotated basis where we generate a random rotation matrix $R$ and rotate the canonical basis by $R$. (For results for Zig-Zag with the canonical basis, see Appendix \ref{sec:add_results}.)

Since the goal of the experiments is to highlight the boundary behavior, and not a general comparison between BPS, Zig-Zag, and Coordinate Sampler, we only perform basic tuning of these algorithms, in particular with respect to the refresh rate which is necessary for BPS to be ergodic (without boundaries). For each sampler we consider a range of transition kernels at the discontinuity. These are the Metropolis--Hastings kernel of Section~\ref{sec:MH}; using 100 iterations of the Metropolis-Hastings kernel; and using the kernel derived from the limiting behaviour in Sections \ref{sec:BPSlim}--\ref{sec:ZZlim}. We have implemented all methods in dimensions $d=2$, 10 and 100; though we only present results for $d=100$ here, with the full results shown in Appendix \ref{sec:add_results}.

 An example of the resulting trajectories for $d=100$, for the case of a Gaussian restricted to the cube, i.e. $\alpha_{\mathrm{out}}=0$, can be found in Figures \ref{fig:d100-trajectories}. There are a number of obvious qualitative conclusions that can be drawn. First, using a single Metropolis--Hastings kernel leads to poor mixing -- with the trajectories often doubling back on themselves when they hit the boundary, and the trajectories for all three algorithms explore only a small part of the sample space. Increasing the number of Metropolis--Hastings kernels improves mixing noticeably, but does introduce diffusive-like behaviour. For the Bouncy Particle Sampler and the Zig-Zag Process, the kernel derived from the limiting behaviour allows for smaller changes in the velocity at the boundary. We see this as the trajectories look qualitatively different from the Metropolis kernels, with the diffusive behaviour being suppressed. 
 Overall the Bouncy Particle Sampler with the limiting kernel appears to mix best -- though this may in part be because this sampler mixes well for low-dimensional summaries of the target, but less well for global properties \cite[]{deligiannidis2018randomized}. 
 
\begin{figure}
    \centering
    \begin{tabular}{|c | c | c | c|}
    \hline
    & Bouncy Particle & Coordinate& Zig-Zag \\
    \hline
    \rotatebox{90}{Limit Kernel} & \includegraphics[width=3.5cm]{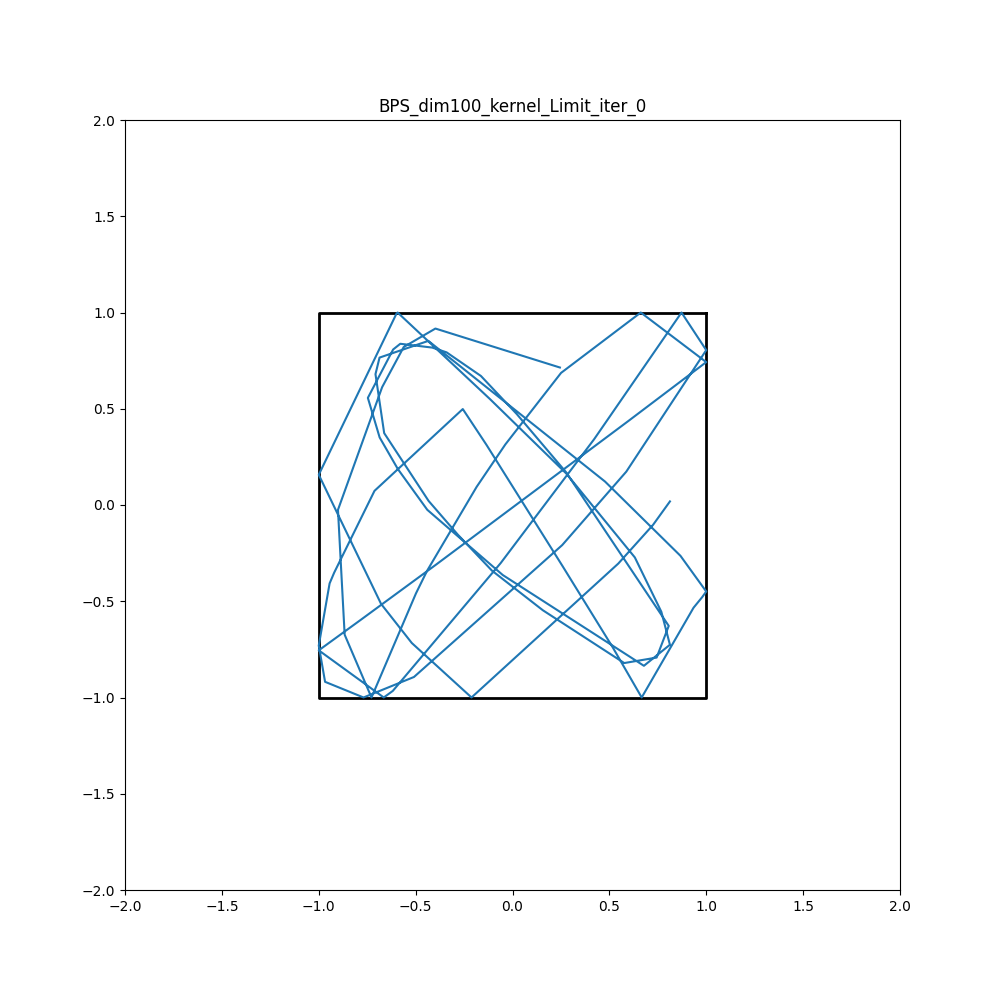} &
    \includegraphics[width=3.5cm]{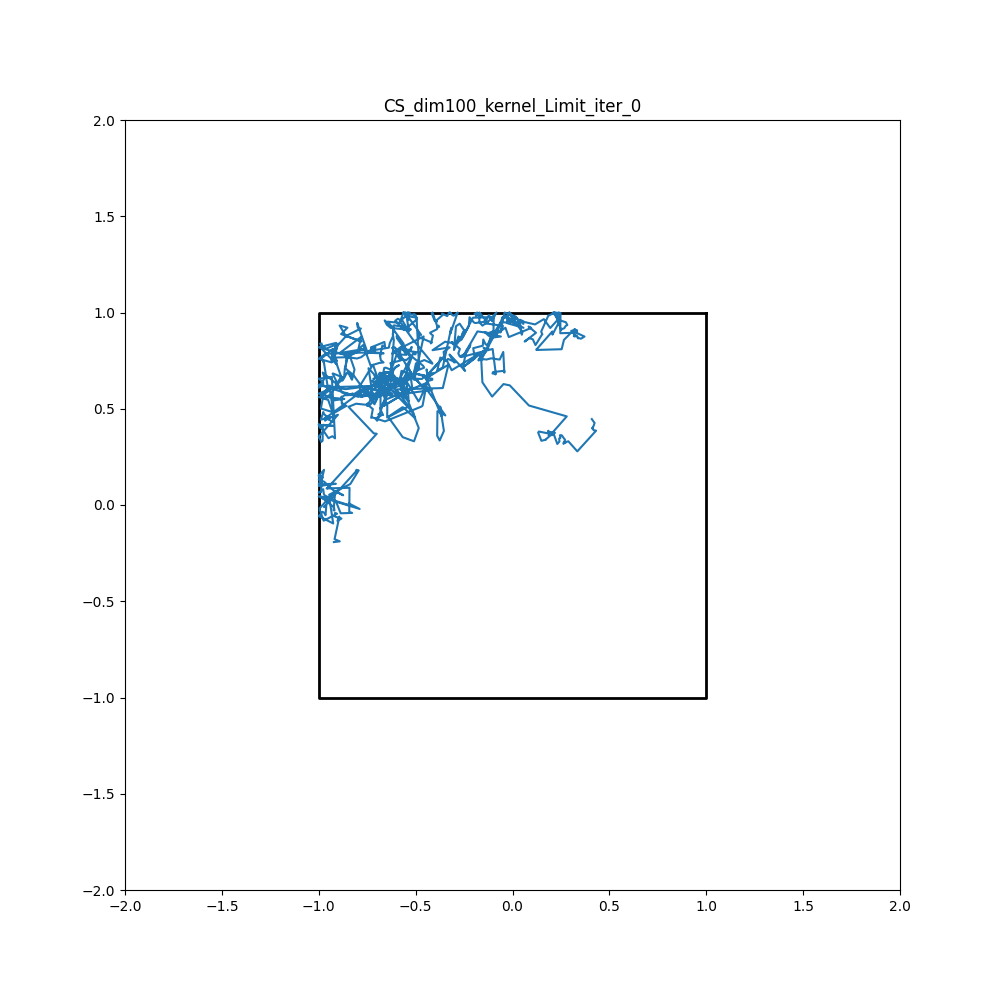} &
 \includegraphics[width=3.5cm]{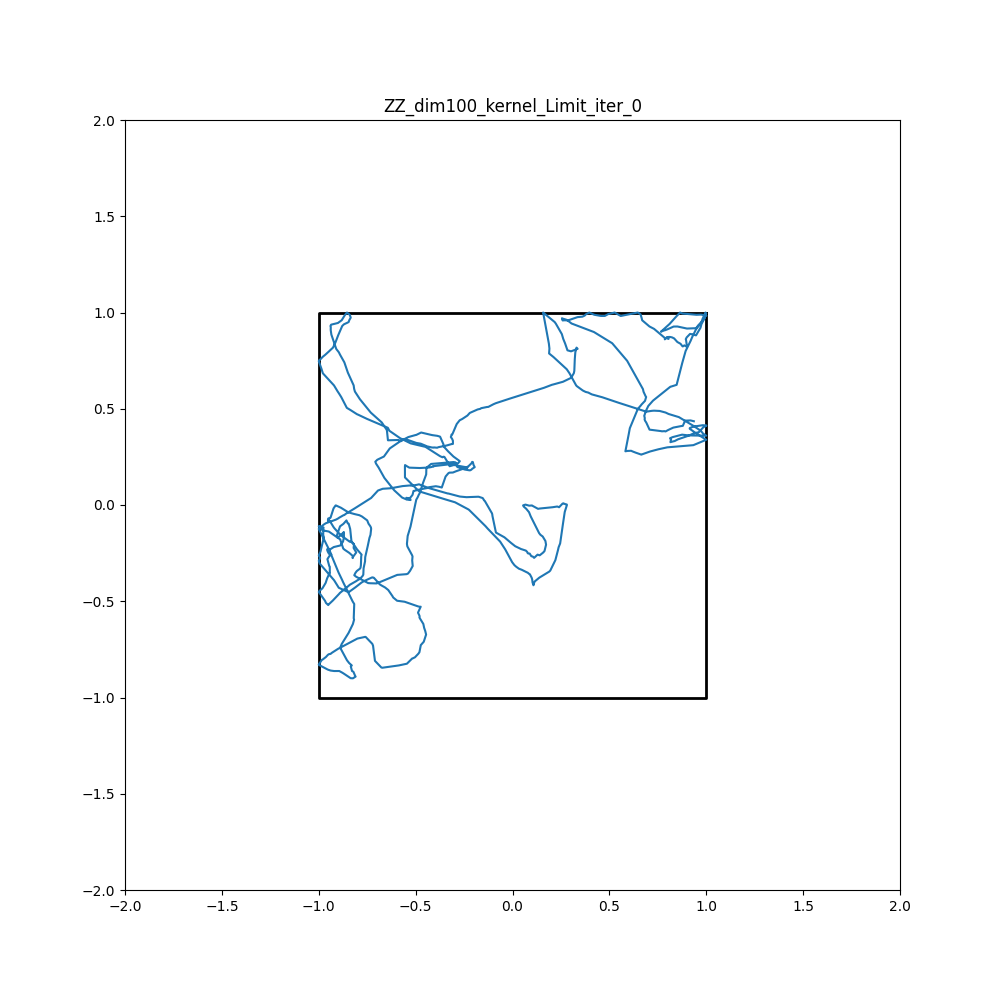} \\
   \hline
    \rotatebox{90}{Metropolis 1} & \includegraphics[width=3.5cm]{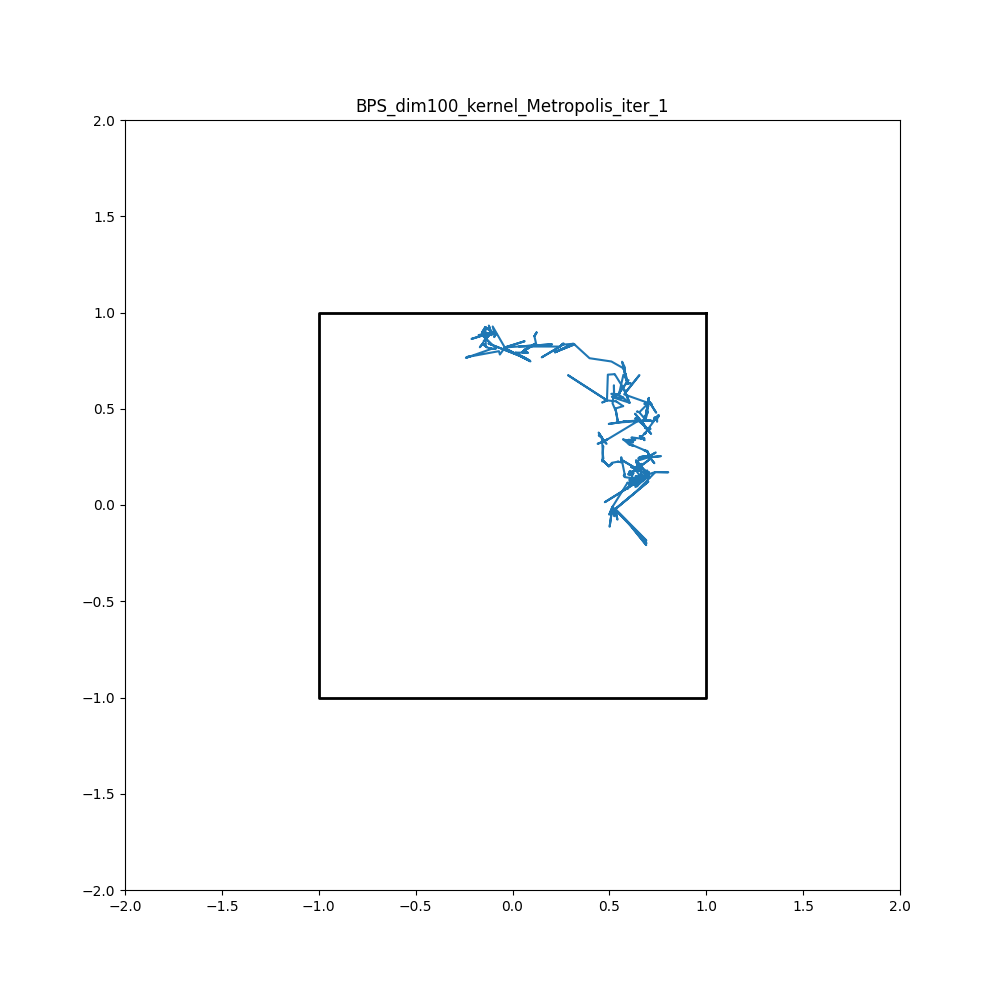} &
    \includegraphics[width=3.5cm]{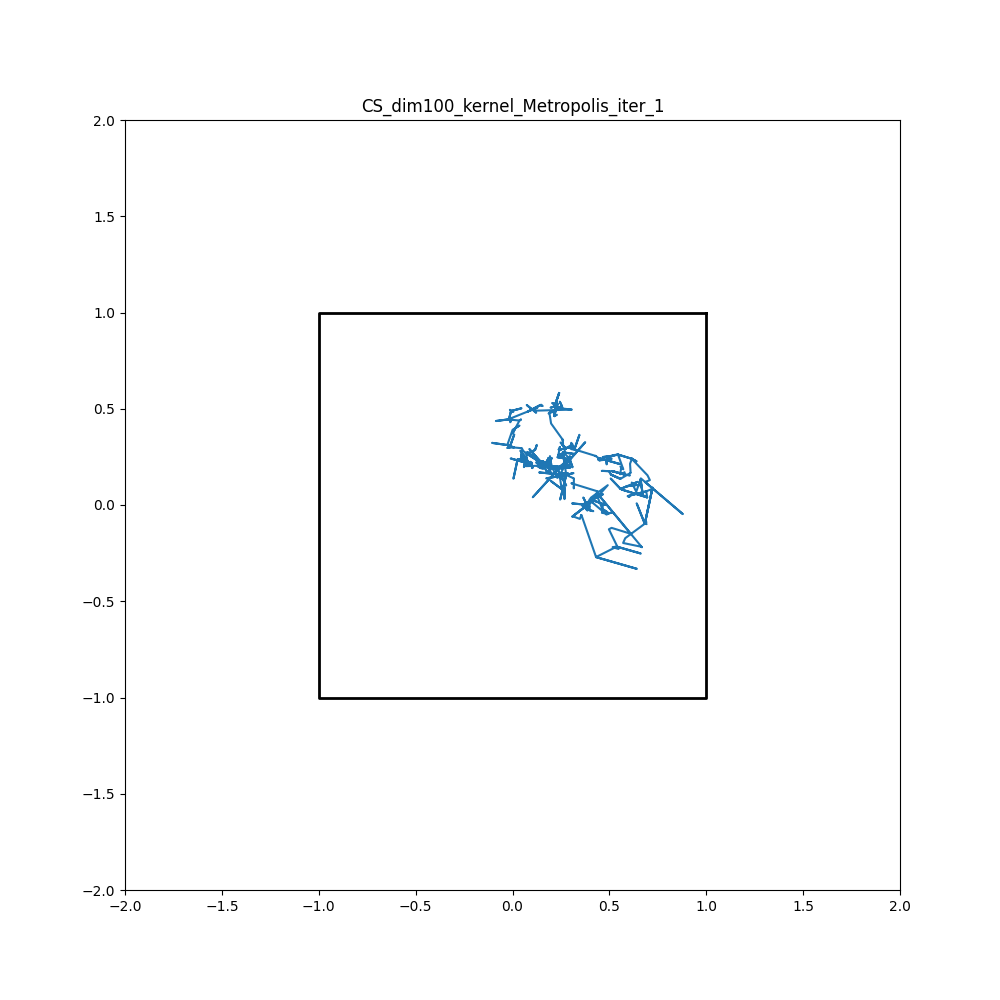} &
\includegraphics[width=3.5cm]{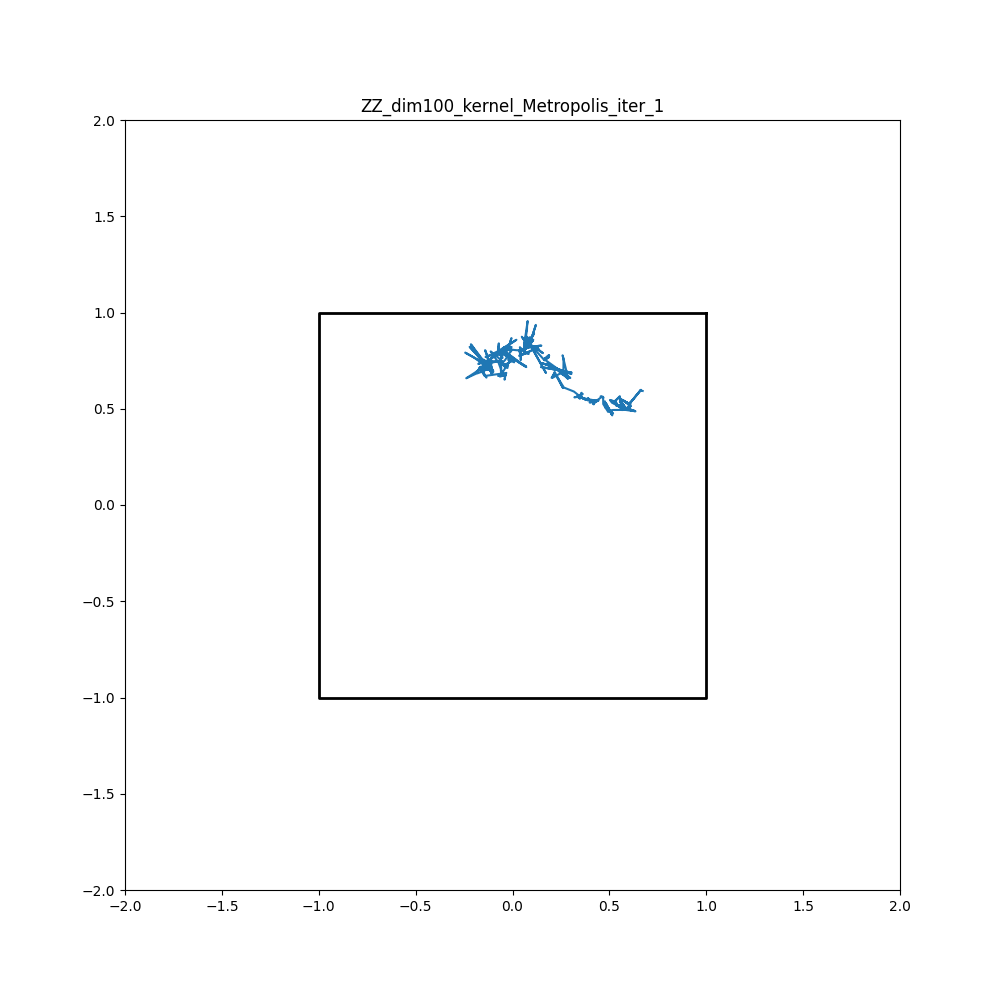} \\
     \hline
    \rotatebox{90}{Metropolis 100} & \includegraphics[width=3.5cm]{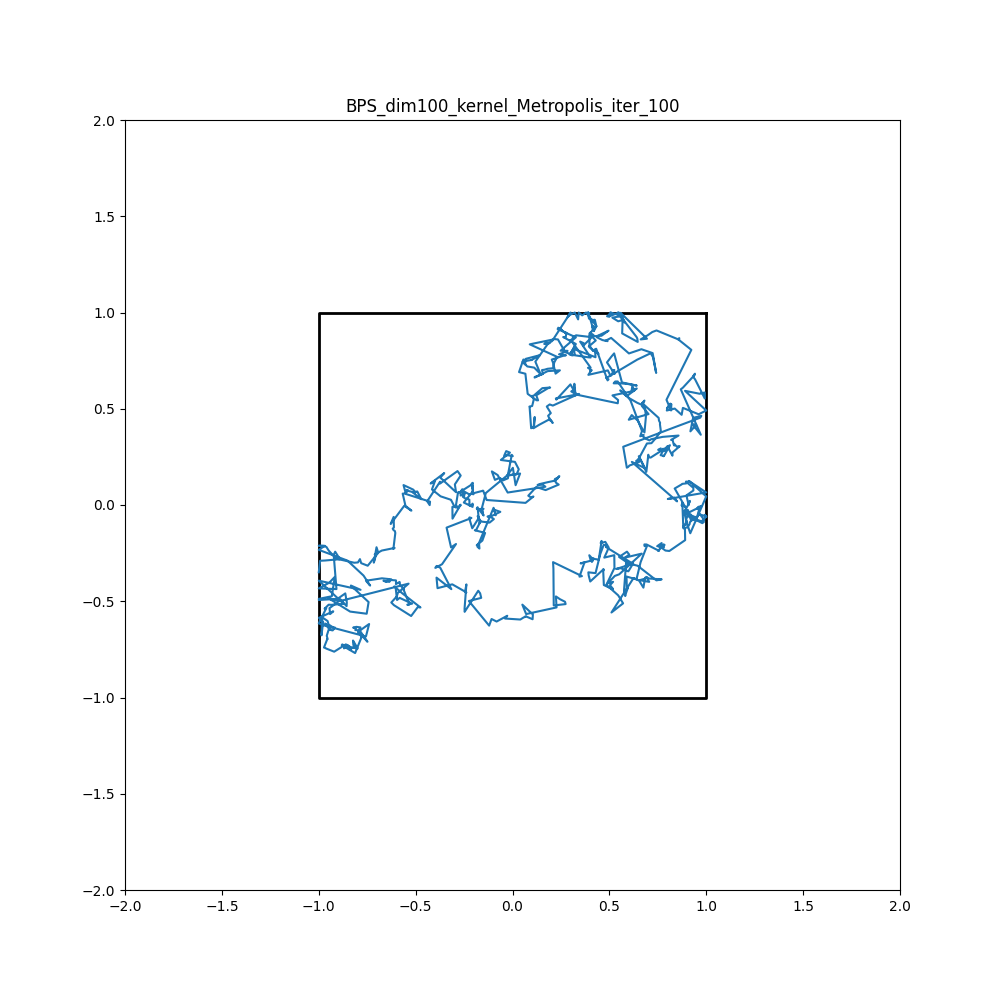} &
    \includegraphics[width=3.5cm]{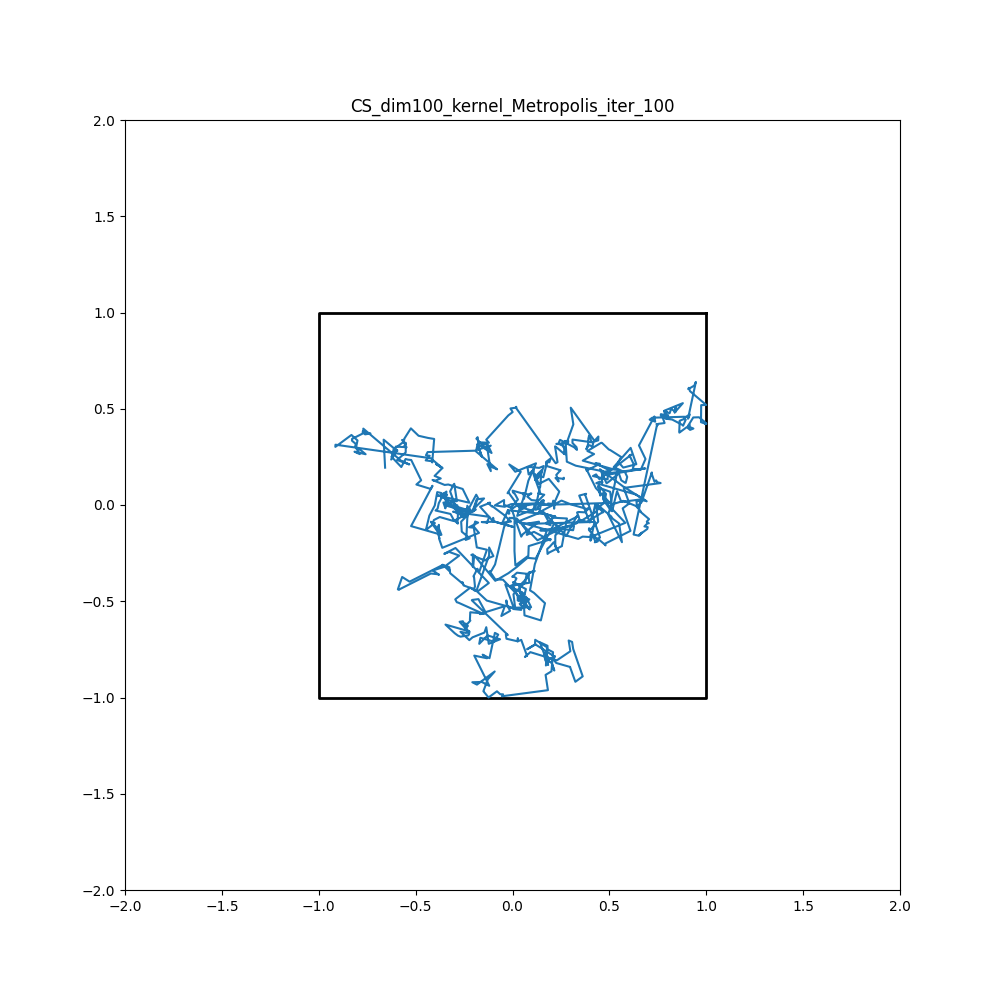}  &
\includegraphics[width=3.5cm]{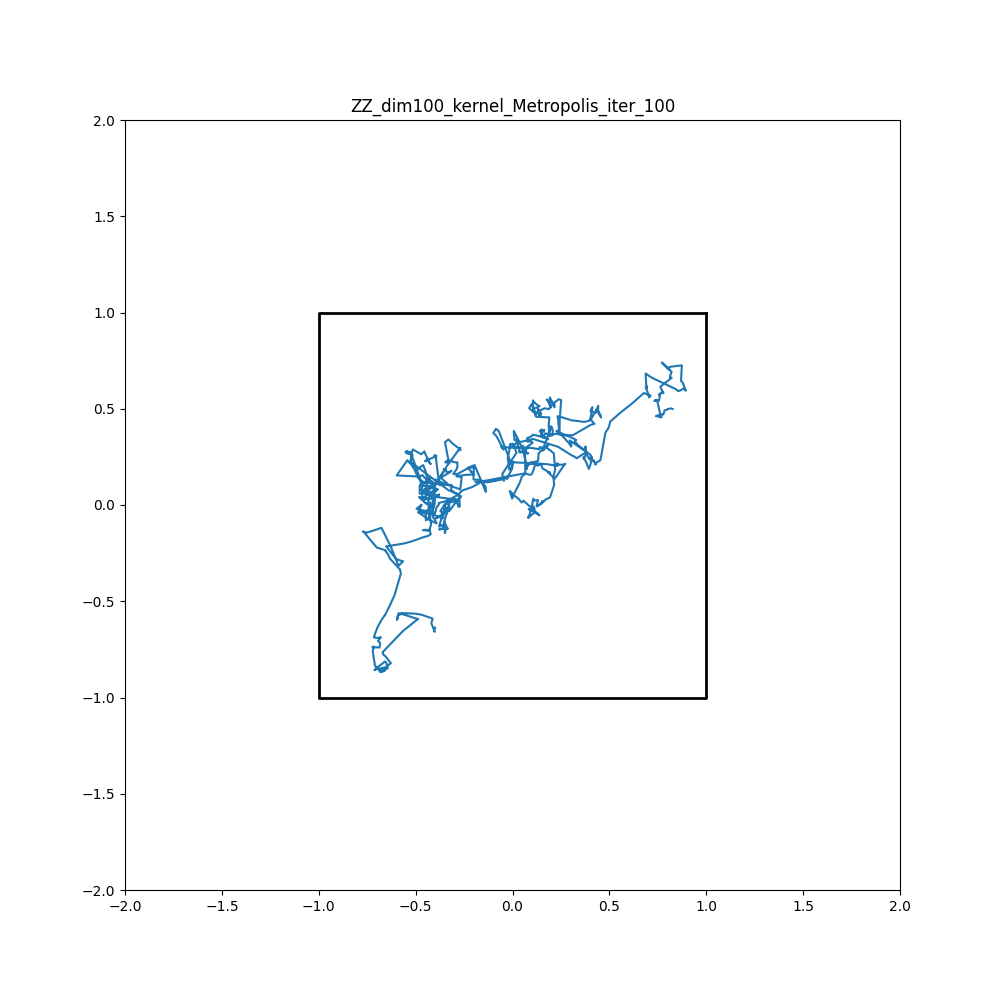} \\
\hline
    \end{tabular}
    \caption{Example trajectories for the Bouncy Particle Sampler (left), Coordinate Sampler (middle) and Zig-Zag Process (right) for simulating from a $100$-dimensional Gaussian distribution restricted to a cube for different transitions on the boundary. We show the dynamics for the first two coordinates only. The different transitions correspond to the limiting behaviour Section \ref{sec:BPSlim}--\ref{sec:ZZlim} (top); using a single Metropolis-Hastings step to sample from $l_x$ (middle); and using 100 Metropolis-Hastings steps to sample from $l_x$ (bottom).}
    \label{fig:d100-trajectories}
\end{figure}

\section{Discussion}

This paper focuses on PDMP-based MCMC samplers to sample densities which are only piecewise smooth. In particular, we presented a general framework for showing invariance of a given target, and then specialise to the case of the common PDMP samplers, namely the Bouncy Particle Sampler, Coordinate Sampler and Zig-Zag sampler when the target is piecewise smooth. Our general framework avoids the general functional-theoretic approach of establishing a given set of functions is a core \cite{Ethier1986, Durmus2021}. Rather, we make use of specific properties of the PDMP processes which we are interested in.

When the target $\pi$ possesses discontinuities, we found that PDMP-based samplers display a surprisingly rich set of behaviours at the boundary, as evidenced by our empirical results, which demonstrate that the choice of jump kernel at the boundary is crucial. We see that the \textit{limiting kernels} compare favourably to Metropolis--Hastings-based jump kernels. For the three samplers we considered, we saw that in our examples the BPS was the best-performing, but other the algorithms may also have more opportunities to be optimized.

We briefly discuss now relationships with the recent and parallel work of \cite{koskela2020zigzag}, in particular Theorem~1 of the most recent preprint version \cite{koskela2020zigzag}. This theorem focuses on the Zig-Zag sampler on a collection of spaces, also with boundaries. The overall approach, assumptions and result statement are similar to ours, since we are all making common use of the framework of \cite{Davis1993}. However, \cite{koskela2020zigzag} is ultimately interested in inference on phylogenetic trees, whereas we are more interested in exploring the specific boundary behaviours of currently popular PDMP samplers. 

There remain several avenues for future exploration. We believe it is possible to weaken Assumptions~\ref{ass:space-informal} and \ref{ass:pi-interior}. For example Assumption~\ref{ass:space-informal}\ref{ass:informal-finite-union} could be relaxed to allow for a countable union of smooth parts; it should be possible to remove Assumption~\ref{ass:space-informal}\ref{ass:informal-projection-tangent} using Sard's theorem; and Assumption~\ref{ass:pi-interior}\ref{ass:pi-C1} could be relaxed to: for all $x\in U_k$, for all $v\in \mathcal{V}^k$, $t\rightarrow \pi_k(x+tv)$ is absolutely continuous. Our chosen set of Assumptions~\ref{ass:space}, \ref{ass:space-v} are sufficient to allow an application of integration by parts, but a simpler and more transparent set of sufficient assumptions would also be desirable. 
Finally, we conjecture that \textit{nonlocal moves} into PDMP samplers, for example based on \cite{Wang2021} or otherwise, might also be useful in boosting convergence in the presence of significant discontinuities.

\section*{Acknowledgements}
This work was supported by EPSRC grants EP/R018561/1 and EP/R034710/1.
\bibliographystyle{plain}
\bibliography{references}

\newpage
\appendix

\section{Theorem~\ref{th:intAf}}
\subsection{Precise assumptions}

\begin{assumption}
    \label{ass:space}
    For each $k \in K$:
    \begin{enumerate}[label=(\roman*)]
        \item $\dim_H((\overline{U_k})^{\mathrm{o}}\setminus U_k) \leq d_k - 2$, where $\dim_H$ is the Hausdorff dimension.
        \item We assume that there is a finite collection $\{W_1^k,\dots ,W_l^k\}$ of disjoint open sets, and a second collection of disjoint open sets, $\{\Omega_1^k,\dots ,\Omega_l^k\}$, where each $W_i^k, \Omega_i^k \subset \R^{d_k-1}$ with $W_i^k \subset \overline{ W_i^k} \subset \Omega_i^k$ for each $i=1,\dots l$.
        \item The boundaries satisfy $\dim \partial W_i^k \le d_k-2$.
        \item Furthermore we assume that we have $C^1$ (injective) embeddings $\phi_i^k :\Omega_i^k \to \R^{d_k}$, and also have continuous normals $n_i:\Omega_i^k \to S^{d_k-1}$. 
        \item Set $M_i := \phi(\overline{ W_i^k})$, for each $i=1,\dots ,l$. Then we have $\partial U_k = M_1 \cup \dots \cup M_l$.
        \item The intersections satisfy $\dim M_i \cap M_j\le d-2$ for any $i\neq j$.
    \end{enumerate}
\end{assumption}
Let 
\begin{equation}
    N_k = \{ x \in \partial U_k | \exists ! i \text{ such that } \exists u \in W_i \text{ such that } \phi(u) = x\}
    \label{eq:def-Nk}    
\end{equation}
be the set of points of $\partial U_k$ for which the normal $n(x) = n(u)$ is well-defined. Since $\dim_H((\overline{U_k})^{\mathrm{o}}\setminus U_k) \leq d_k - 2$, for all points for which the normal exists, the boundary separates $U_k$ and $\mathbb{R}^{d_k}\setminus U_k$, and does not correspond to an ``internal" boundary of $U_k$ that is removed. By convention, we assume that $n(x)$ is the outer normal.

\begin{assumption}
    \label{ass:space-v}
    We make the following assumptions: for all $v\in \mathcal{V}$, there is a refinement $\{W_1^v,...,W_m^v\}$ and $\{\Omega_1^v,...,\Omega_m^v\}$ of the boundary decomposition such that: 
    \begin{enumerate}[label=(\roman*)]
        \item This new decomposition satisfies the previous assumptions.
        \item For all $0 < i \leq m$, there exists $j \in \{1,\dots,l\}$ such that $\phi(W_i^v) \subset \phi(W_j)$.
        \item for all $x \in W_i^v$, $y \in W_j$ such that $\phi(x) = \phi(y)$, then $n_i(x) = n_i(y)$.
        \item for each $0 < i \leq m$, $\dim_H p_v(M_i^v)\leq n-2$ where $M_i^v=\{\phi_i(x):x\in \overline W_i^v \text{ and } \langle v, n_i(x)\rangle =0\}$ and $p_v$ is the orthogonal projection on $H_v=\mathrm{span}(v)^{\perp}$. 
    	\item for all $ 0 < i \leq m$ and all $x\in\R^n$, the sets $M_i^{v+}\cap(x+\R v)$ and $M_i^{v-}\cap(x+\R v)$ have at most one element where  $M_i^{v+}=\{x=\phi(y)\in M_i: \langle n_i(y), v \rangle >0\}$ and $M_i^{v-}=\{x=\phi(y)\in M_i: \langle n_i(y), v\rangle <0\}$.
    \end{enumerate}

\end{assumption}

\subsection{Proof of Theorem~\ref{th:intAf} }
\label{subsec:proof-th-inth}
    We abuse notations and write $\partial_v f$ the derivative at $t=0$ of $f(x+tv,v)$ with respect to $t$ for a fixed $v$, which corresponds to the term $\Xi f$. 

\subsubsection{Integrability}
We give two integrability lemmas that will be useful for the following proof.

\begin{lemma}
Let $f \in \mathcal{D}(A)$. We know that:
\begin{enumerate}[label=(\roman*)]
    \item $f$ is bounded.\label{AppA_i}
    \item $Af$ is bounded.\label{AppA_ii}
    \item $f \in \mathcal{D}(\textswab{A})$ and $A f = \textswab{A} f$.\label{AppA_iii}
\end{enumerate}
\end{lemma}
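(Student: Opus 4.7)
The plan is that all three claims follow essentially immediately by unwinding the definitions and invoking results already established earlier in the excerpt, so the ``proof'' is really just a matter of correctly assembling the right citations. No new machinery should be required, and I would not expect any obstacle of substance — the purpose of this lemma appears to be to collect reusable facts about functions in $\mathcal{D}(A)$ before embarking on the computation of $\int Af\,\dif\mu$ via the extended generator formula \eqref{eq:extended-gen-expression}.

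For \ref{AppA_i}, I would simply observe that $\mathcal{D}(A)$ is defined in \eqref{eq:D(A)_strong} as a subset of $\mathcal{B}_0$, and $\mathcal{B}_0 \subset \mathcal{B}(E)$ by construction; hence every $f \in \mathcal{D}(A)$ is bounded. For \ref{AppA_ii}, I would cite the earlier lemma asserting that $A$ maps $\mathcal{D}(A) \to \mathcal{B}_0$, so again $Af \in \mathcal{B}_0 \subset \mathcal{B}(E)$ is bounded. For \ref{AppA_iii}, the two claims are precisely the statement of the proposition in the excerpt asserting that the extended generator $(\textswab{A}, \mathcal{D}(\textswab{A}))$ extends the strong generator $(A, \mathcal{D}(A))$, whose proof is attributed to \cite[p.\ 32]{Davis1993}.

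Thus the only task in writing the proof is to make sure the three one-line arguments are presented in the correct order and that the references back to the earlier propositions and lemma are unambiguous. The main (mild) drafting decision is whether to restate the definition of $\mathcal{D}(A)$ inline so the reader does not need to flip back to \eqref{eq:D(A)_strong}, or simply to cite it; I would favour a short inline reminder for \ref{AppA_i}, and pure citation for \ref{AppA_ii} and \ref{AppA_iii}. Since nothing here is delicate, the entire proof should fit in a handful of lines and serve purely as a bookkeeping step for subsequent arguments in the appendix.
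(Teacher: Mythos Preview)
Your proposal is correct and essentially matches the paper's own proof: parts \ref{AppA_i} and \ref{AppA_ii} are handled identically via the inclusions $\mathcal{D}(A)\subset\mathcal{B}_0\subset\mathcal{B}(E)$ and $A:\mathcal{D}(A)\to\mathcal{B}_0$. For \ref{AppA_iii} the paper spells out the Dynkin-formula argument (\cite[(14.13)]{Davis1993}) showing $C_t^f$ is a true martingale rather than simply citing the earlier extension proposition as you do, but this is the same content and your route is arguably cleaner.
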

\begin{proof}
    \ref{AppA_i} and \ref{AppA_ii} are immediate since $\mathcal D(A)\subset \mathcal{B}_0 \subset \mathcal B(E)$, and $A:\mathcal D(A) \to \mathcal{B}_0\subset \mathcal B(E) $.
    
    \ref{AppA_iii} follows from the fact that for $f\in \Dom(A)$ there is the Dynkin formula which exactly implies $C_t^f$ is a true martingale (\cite[(14.13)]{Davis1993}), hence also a local martingale.
\end{proof}

\begin{lemma}\label{lem:L1(mu)}
$\lambda(k,x,v)(Q f(k,x,v) - f(k,x,v)) \in L_1(\mu)$ and $\partial_v f(k,x,v)\in L_1(\mu)$
\end{lemma}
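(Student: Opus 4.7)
The plan is to derive both integrability claims from the recorded boundedness of $f$ and $Af$, the probability-kernel property of $Q$, and the standing integrability assumption on $\lambda$, by means of the explicit generator formula \eqref{eq:extended-gen-expression}.

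First I would address $\lambda(Qf - f) \in L_1(\mu)$. Since $Q(\cdot \mid z)$ is a probability measure for each $z$ and $f \in \mathcal{D}(A)$ is bounded (by the immediately preceding lemma), we have $|Qf(z)| \le \|f\|_\infty$, and hence the pointwise bound $|Qf - f| \le 2\|f\|_\infty$. Combining this with Assumption~\ref{ass:pi-interior}\ref{ass:lambda-integrable} yields
\[
    \int_E |\lambda(z)\{Qf(z) - f(z)\}|\,\dif \mu(z) \le 2\|f\|_\infty \int_E |\lambda(z)|\,\dif \mu(z) < \infty.
\]

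For the second claim, I would rearrange the generator identity \eqref{eq:extended-gen-expression} as
\[
    \Xi f(z) = \textswab{A} f(z) - \lambda(z)\{Qf(z) - f(z)\},
\]
and use $\mathcal{D}(A) \subset \mathcal{D}(\textswab{A})$ with $\textswab{A} f = Af$ on $\mathcal{D}(A)$. The first term on the right is then the bounded function $Af$, which is integrable against the finite measure $\mu$ (recall $\mu = \pi_k(x) p_k(v)$ with $p_k$ a probability distribution and $\pi$ the finite target measure). The second term lies in $L_1(\mu)$ by the previous step. In the free-transport setting we have $\Xi f(k, x, v) = v \cdot \nabla_x f(k, x, v) = \partial_v f(k, x, v)$, whence $\partial_v f \in L_1(\mu)$.

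No substantive obstacle arises: once the boundedness of $f$ and $Af$, the probability-kernel nature of $Q$, and the integrability $\lambda \in L_1(\mu)$ are in hand, the argument reduces to a pointwise bound and a rearrangement of \eqref{eq:extended-gen-expression}. The only subtlety worth flagging is the implicit use that $\mu$ is a finite measure, which is the standing scenario for PDMP samplers targeting a probability density; without this, one would need to restrict attention to a subclass of $\mathcal{D}(A)$ on which $Af$ decays sufficiently fast.
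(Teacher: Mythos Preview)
Your argument is correct and follows essentially the same route as the paper's proof: bound $\lambda(Qf-f)$ via $|Qf-f|\le 2\|f\|_\infty$ together with $\lambda\in L_1(\mu)$, and then deduce $\partial_v f\in L_1(\mu)$ by writing $\Xi f = Af - \lambda(Qf-f)$ and using boundedness of $Af$ against the finite measure $\mu$. Your write-up is simply more explicit than the paper's about the use of the probability-kernel property of $Q$ and the finiteness of $\mu$.
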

\begin{proof}
Since $Af$ is bounded, $Af \in L_1(\mu)$. Since $f$ is bounded and with \ref{ass:lambda-integrable} of Assumption~\ref{ass:pi-interior}, $\lambda(k,x,v)(Q f(k,x,v) - f(k,x,v)) \in L_1(\mu)$.
Hence, $\partial_v f(k,x,v)\in L_1(\mu)$.
\end{proof}
This means that we can treat each term independently.

\subsubsection{Integration of the infinitesimal generator over $E$}
\begin{proposition}
    \label{prop:integration-by-part-on-positions}
    Let $f\in \mathcal{D}(A)$ and let $k \in K$. For all $v\in \mathcal{V}^k$
    we have:
    \begin{equation*}
        \begin{split}
              \int_{U_k} \pi_k(x) \partial_v f(k,x,v)\dif x=-\int_{U_k} f(k,x,v) \partial_v \pi_k(x) \dif x\dif v   \\
              + \int_{\partial U_k\cap N_k}\pi_k(x) f(k,x,v)\,|\langle n(x), v\rangle|\dif\sigma(x),
        \end{split}
    \end{equation*}
where $\sigma$ is the Lebesgue measure of the boundary (seen as a Riemannian manifold).
\end{proposition}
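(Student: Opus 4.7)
The plan is to reduce the identity to a one-dimensional integration by parts along lines in the direction $v$, and then reassemble the endpoint contributions using the area formula for the orthogonal projection onto $H_v := \mathrm{span}(v)^{\perp}$.

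Fix $k \in K$ and $v \in \mathcal{V}^k$; without loss of generality take $|v|=1$. Lemma~\ref{lem:L1(mu)} together with Assumption~\ref{ass:pi-interior}(iii) justifies Fubini, so
\[
\int_{U_k} \pi_k(x)\,\partial_v f(k,x,v)\,dx \;=\; \int_{H_v}\!\int_{I_y} \pi_k(y+tv)\,\partial_t\bigl[f(k,y+tv,v)\bigr]\,dt\,dy,
\]
where $I_y := \{t\in\R : y+tv\in U_k\}$. Using Assumption~\ref{ass:space-v}(v) (at most one entry and one exit per smooth piece $M_i^v$), I would show that for Lebesgue-a.e.\ $y\in H_v$ the slice $I_y$ is a finite disjoint union of open intervals $(a_j(y),b_j(y))$ whose endpoints lie in $\partial U_k \cap N_k$ at non-tangent points. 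The exceptional $y$ form a null set because Assumption~\ref{ass:space}(i) removes the interior singular set $(\overline{U_k})^{\mathrm o}\setminus U_k$, Assumption~\ref{ass:space}(iii) and (vi) remove the non-smooth corners of $\partial U_k$, and Assumption~\ref{ass:space-v}(iv) removes the tangent locus $\{\langle n,v\rangle=0\}$.

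On each admissible interval both $t\mapsto \pi_k(y+tv)$ (by Assumption~\ref{ass:pi-interior}(ii)) and $t\mapsto f(k,y+tv,v)$ (by the explicit description of $\mathcal D(\textswab A)$ in \cite[(26.14)]{Davis1993}, which forces absolute continuity along flow lines) are absolutely continuous, so the classical one-dimensional integration-by-parts formula yields
\[
\int_{a_j}^{b_j} \pi_k\,\partial_t f\,dt \;=\; -\int_{a_j}^{b_j} f\,\partial_t \pi_k\,dt \;+\; \bigl[\pi_k f\bigr]_{t=a_j^+}^{t=b_j^-}.
\]
Summing over $j$ and integrating over $y$, the interior term returns $-\int_{U_k} f\,\partial_v\pi_k\,dx$ by a second application of Fubini. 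For the boundary term, on each smooth piece $M_i^v$ the projection $p_v$ is injective on the subsets $M_i^{v,\pm}$ of definite sign of $\langle n,v\rangle$ and has Jacobian $|\langle n(x),v\rangle|$, so the area formula converts $dy$ on $H_v$ into $|\langle n(x),v\rangle|\,d\sigma(x)$ on $\partial U_k$; with the convention that $f(k,x,v)$ on $\partial U_k\cap N_k$ denotes the limit of $f$ taken from inside $U_k$ along the trajectory, the exit and entry endpoint contributions assemble into the single surface integral in the statement.

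The main technical obstacle is the geometric bookkeeping on $H_v$: proving measurability of the multivalued map $y\mapsto\{(a_j(y),b_j(y))\}$ and verifying that the exceptional $y$ (where an endpoint lies outside $N_k$, coincides with a tangent point, or generates infinitely many intervals) form a Lebesgue-null subset of $H_v$. Assumptions~\ref{ass:space} and~\ref{ass:space-v} are tailored to control precisely these pathologies; once that measure-theoretic work is in hand, the one-dimensional integration by parts, Fubini, and the area formula combine routinely, smooth-piece by smooth-piece, to yield the claimed identity.
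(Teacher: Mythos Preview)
Your proposal is correct and follows essentially the same route as the paper: the paper's proof simply invokes a separately-stated integration-by-parts result (Proposition~\ref{prop:integration-by-part-general} / Theorem~\ref{thm:integrationbyparts} in the appendix), whose proof is exactly the Fubini--one-dimensional-IBP--area-formula argument you outline, applied to the product $f\pi_k$. The paper abstracts the geometric bookkeeping into that standalone result, while you sketch it inline, but the mathematical content---slicing by lines in direction $v$, using absolute continuity from $\mathcal D(\textswab A)$, controlling the exceptional set of projections via Assumptions~\ref{ass:space} and~\ref{ass:space-v}, and reassembling via the Jacobian $|\langle n,v\rangle|$---is identical.
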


\begin{proof}
    We would like to use an integration by parts result on the integral in question, which is precisedly detailed in Appendix~\ref{app:subsec:int}.
    
    So now let $v \in \mathcal{V}^k$.
    Assumptions~\ref{ass:space} and \ref{ass:space-v} imply that $U_k$ satisfies Assumption~\ref{ass:integration-geo} of Appendix~\ref{app:subsec:int}. Furthermore, $f\in \mathcal D(A) \subset \mathcal{D}(\textswab{A})$, thus for all $x\in U_k$, $t\rightarrow f(k,x+tv,v)$ is absolutely continuous. Finally, Lemma~\ref{lem:L1(mu)} implies that $\partial_v f(k,x,v) \in L_1(\mu)$, hence we can use  
    Proposition~\ref{prop:integration-by-part-general} of Appendix~\ref{app:subsec:int} on $U_k$ with the function $z\mapsto f(z) \pi(z)$. 
    In this context, for $x \in \partial U_k$,
    if $\langle n(x),v \rangle > 0$,
    \begin{align*}
        \pi_k(x^-) f(k,x^-,v) &= \lim_{t\uparrow 0}\pi_k(x+tv) f(k,x+tv,v) = \pi_k(x) f(k,x,v), \\
        \pi_k(x^+) f(k,x^+,v) &= 0;
    \end{align*}
    otherwise if $\langle n(x),v \rangle > 0$,
    \begin{align*}
        \pi_k(x^-) f(k,x^-,v) &= 0, \\
        \pi_k(x^+) f(k,x^+,v) &= \lim_{t\downarrow 0}\pi_k(x+tv) f(k,x+tv,v) = \pi_k(x) f(k,x,v).
    \end{align*}
    This yields:
    \begin{align*}
        \int_{U_k}\partial_v \pi_k(x) f(k,x,v)\dif x &=-\int_{U_k} f(k,x,v) \partial_v \pi_k(x) \dif x\dif v\\
        &+ \int_{\partial U_k\setminus C_v}\pi_k(x) f(k,x,v)\, \langle n(x), v \rangle \dif\sigma(x), \\
    \end{align*}
    where we removed the absolute value around $\langle n(x), v \rangle$ to account for the sign difference of $\pi_k(x^-) f(k,x^-,v) - \pi_k(x^+) f(k,x^+,v)$.
	Furthermore, $\partial U_k\setminus C_v \subset \partial U_k \cap N_k$ and these two sets differs by a set of zero measure. 
	Hence:
	\begin{equation*}
	    \begin{split}
	        	    \int_{U_k}\partial_v \pi_k(x) f(k,x,v)\dif x=-\int_{U_k} f(k,x,v) \partial_v \pi_k(x) \dif x\dif v\\
	        	    + \int_{\partial U_k\cap N_k}\pi_k(x) f(k,x,v)\, \langle n(x), v\rangle\dif\sigma(x),
	    \end{split}
	\end{equation*}
which concludes the proof.
    
\end{proof}

\begin{lemma}
\begin{align*}
    \int_{E_k} \partial_v f(k,x,v) &\mu(k,x,v) \dif x \dif v 
    = -\int_{U_k\times \mathcal V^k} f(k,x,v) \nabla_x \mu \cdot v  \dif x \dif v\\
    &+ \int_{(\partial U_k \cap N_k)\times \mathcal V^k} f(k,x,v) \mu(k,x,v) \langle v, n(x) \rangle \dif\sigma(x) \dif v
\end{align*}
\label{lemma:1rst-part}
\end{lemma}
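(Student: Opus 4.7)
The proof should be essentially a Fubini argument combined with a pointwise application of Proposition~\ref{prop:integration-by-part-on-positions} in the $v$ variable. Recall that $\mu(k,x,v) = \pi_k(x)\, p_k(v)$ with $p_k$ supported on $\mathcal{V}^k$, so the left-hand side integral over $E_k = U_k \times \mathbb{R}^{d_k}$ reduces to an integral over $U_k \times \mathcal{V}^k$. Lemma~\ref{lem:L1(mu)} tells us that $\partial_v f \in L_1(\mu)$, which legitimises Fubini--Tonelli to give
\begin{equation*}
\int_{E_k} \partial_v f(k,x,v)\, \mu(k,x,v) \dif x \dif v
= \int_{\mathcal{V}^k} p_k(v) \left( \int_{U_k} \pi_k(x)\, \partial_v f(k,x,v) \dif x \right) \dif v.
\end{equation*}

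Next, for each fixed $v \in \mathcal{V}^k$, Proposition~\ref{prop:integration-by-part-on-positions} applies verbatim (its hypotheses come from Assumptions~\ref{ass:space}, \ref{ass:space-v}, together with Assumption~\ref{ass:pi-interior}\ref{ass:pi-C1} and \ref{ass:pi-dot-integrable}), yielding
\begin{equation*}
\int_{U_k} \pi_k(x)\, \partial_v f(k,x,v) \dif x
= -\int_{U_k} f(k,x,v)\, \partial_v \pi_k(x) \dif x + \int_{\partial U_k \cap N_k} \pi_k(x)\, f(k,x,v)\, \langle n(x), v\rangle\, \dif\sigma(x).
\end{equation*}

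The final step is to substitute this expression back, multiply by $p_k(v)$, and integrate over $v \in \mathcal{V}^k$. Since $p_k$ does not depend on $x$, we have $p_k(v)\, \partial_v \pi_k(x) = v \cdot \nabla_x \mu(k,x,v)$ and $p_k(v)\, \pi_k(x) = \mu(k,x,v)$, producing exactly the two terms on the right-hand side of the claim. The one place where care is needed is the repeated use of Fubini--Tonelli across the two pieces of the decomposition: the volume term's joint integrability follows from boundedness of $f$ combined with Assumption~\ref{ass:pi-interior}\ref{ass:pi-dot-integrable} (via an integration of the pointwise $L_1$ bound against $p_k$), and integrability of the boundary term then follows by difference, using the already-established joint integrability of the left-hand side and of the volume term. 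This is the main (though mild) obstacle; once it is dispatched, the proof is just bookkeeping.
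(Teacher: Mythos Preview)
Your proposal is correct and follows essentially the same route as the paper: apply Fubini via Lemma~\ref{lem:L1(mu)}, invoke Proposition~\ref{prop:integration-by-part-on-positions} for each fixed $v$, then Fubini back using Assumption~\ref{ass:pi-interior}\ref{ass:pi-dot-integrable} for the volume term. The only cosmetic difference is in justifying the second Fubini on the boundary term---the paper appeals directly to boundedness of $f\mu$, whereas you argue ``by difference''; note that your difference argument strictly yields only that the iterated integral $\int_{\mathcal V^k}\bigl(\int_{\partial U_k\cap N_k}\cdots\,\dif\sigma\bigr)p_k(v)\,\dif v$ is finite, not joint integrability of the integrand, so to rewrite it as a product-space integral you still need the paper's boundedness observation (or a Tonelli argument on the nonnegative pieces).
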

\begin{proof}
Since $\partial_v f$ is in $L_1(\mu)$, $\partial_v f(k,x,v) \mu(k,x,v)$ is integrable. Hence by Fubini's theorem:
\[
    \int_{U_k\times\mathcal{V}^k} \partial_v f(k,x,v) \mu(k,x,v) \dif x\dif v = \int_{\mathcal{V}^k}\int_{U_k} \partial_v f(k,x,v) \mu(k,x,v) \dif x\dif v.
\]
Using Proposition~\ref{prop:integration-by-part-on-positions}:
\begin{align*}
    \int_{U_k\times\mathcal{V}^k} \partial_v f(k,x,v) \mu(k,x,v) &\dif x\dif v 
    = - \int_{\mathcal{V}^k}\int_{U_k} f(k,x,v) (\nabla \pi_k(x) \cdot v) p_k(v) \dif x\dif v \\
    &+ \int_{\mathcal V^k}\int_{\partial U_k \cap N_k} f(k,x,v) \mu(k,x,v) \langle v, n(x) \rangle \dif\sigma(x) \dif v.
\end{align*}
Using \ref{ass:pi-dot-integrable} from Assumption~\ref{ass:pi-interior}, $f(k,x,v) (\nabla \pi_k(x) \cdot v) p_k(v)$ is integrable, and $f(k,x,v) \mu(k,x,v)$ is bounded. Hence we can use Fubini a second time on both terms to get the result.
\end{proof}

\section{Theorem: integration by parts}
\label{app:subsec:int}
\begin{assumption}[informal geometrical assumption]
    \label{ass:ipp-geo-informal}
    Let $U$ be an open set in $\R^n$ such that:
    \begin{itemize}
        \item $\bar{U} = \R^n$
        \item the boundary $\partial U$ can be decomposed as a finite union of smooth closed sub-manifolds with piecewise $C^1$ boundaries in $\R^n$,
        \item for any $x,v \in \R^n$, the intersection $\partial U \cup \{x+\R v\}$ is finite (not taking into account the points where $v$ is tangent to $\partial U$),
        \item $\dim_H(N_v) \leq n-2$ where $N_v$ is the subset of $\partial U$ where the normal is ill-defined,
        \item $\dim_H p_v(M^v)\leq n-2$ where $M^v=\{x\in \partial U \text{ such that } \langle v,n(x) \rangle=0\}$ and $p_v$ is the orthogonal projection on $H_v=v^{\perp}$,
    \end{itemize}
    with $\dim_H$ the Hausdorff dimension.
\end{assumption}
These assumptions are made precise in Assumption~\ref{ass:integration-geo} of the next section.

\begin{proposition}
\label{prop:integration-by-part-general}
Let $U$ be an open set of $\mathbb{R}^n$ satisfying Assumption~\ref{ass:integration-geo}, and $\partial U$ be its boundary. 
Let $f$ and $g$ be measurable functions from $U$ to $\R$ such that:
\begin{enumerate}
    \item $f$ is bounded;
    \item for any sequence $(y_n)\subset U$ with $\|y_n\|\rightarrow\infty$, $\lim_{n\to\infty}g(y_n)=0$;
    \item for each $x,v\in \R^{n}$, the functions $t\mapsto f(x+tv)$ and $t\mapsto g(x+tv)$ are absolutely continuous on  $U\cap(x+\R v)$ and $\partial_t f,\partial_t g\in L^1(U)$.
\end{enumerate}
Fix $v\in \R^n$. Then, using the convention 
\[
	f(x+) = \lim_{t\downarrow 0}f(x+tv) \text{ and } f(x-) = \lim_{t\uparrow 0}f(x+tv)
\]
and 
\[
	g(x+) = \lim_{t\downarrow 0}g(x+tv) \text{ and } g(x-) = \lim_{t\uparrow 0}g(x+tv)
\]
we have:
\begin{equation*}
    \begin{split}
        	\int_U\partial_v f(x) \, g(x) \dif x = \int_{\partial U\setminus N_v}(g(x^-)f(x^-)-f(x^+)g(x^+))\, |\langle n(x), v \rangle| \dif\sigma(x)
	\\
	-\int_U f(x) \, \partial_v g(x) \dif x,
    \end{split}
\end{equation*}
where the second term is integrated with respect to the Lebesgue measure of the boundary (seen as a Riemannian manifold) and $N_v$ is the set of points where the normal is ill-defined. 
\end{proposition}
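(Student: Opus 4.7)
The plan is to reduce the $n$-dimensional integration by parts to the classical 1D statement along lines parallel to $v$, and then reassemble the resulting boundary terms into a surface integral on $\partial U$ via a change of variables with Jacobian $|\langle n, v\rangle|$. Write $\R^n = H_v \oplus \R v$ with $H_v = v^\perp$, and for $y \in H_v$ set $L_y = \{y+tv : t\in \R\}$. The geometric hypotheses on $U$ (finite decomposition of $\partial U$ into smooth pieces with continuous normal, $\dim_H(p_v(M^v))\le n-2$, and finiteness of transverse intersections $L_y \cap \partial U$) imply that for Lebesgue-a.e. $y \in H_v$ the slice $L_y \cap U$ is a finite disjoint union of open intervals $(a_i(y), b_i(y))$, and every finite endpoint lies on $\partial U \setminus N_v$ at a point where $\langle n, v\rangle \ne 0$.

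First I would apply Fubini (justified by $\partial_v f, \partial_v g \in L^1(U)$, boundedness of $f$, and vanishing of $g$ at infinity) to get
\begin{equation*}
\int_U \partial_v f(x)\,g(x)\,\dif x = \int_{H_v} \sum_i \int_{a_i(y)}^{b_i(y)} \partial_t f(y+tv)\,g(y+tv)\,\dif t\,\dif y.
\end{equation*}
On each interval, the classical 1D integration by parts for absolutely continuous functions (hypothesis~3) yields a contribution
\begin{equation*}
f((y+b_i v)^-)\,g((y+b_i v)^-)\;-\;f((y+a_i v)^+)\,g((y+a_i v)^+)\;-\;\int_{a_i}^{b_i} f\,\partial_t g\,\dif t,
\end{equation*}
with infinite endpoints contributing zero by the decay of $g$ and boundedness of $f$. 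A second application of Fubini reassembles the $\int f\,\partial_t g\,\dif t$ terms into $-\int_U f\,\partial_v g\,\dif x$, so it remains only to identify the sum of pointwise boundary terms with the surface integral in the statement.

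Next I would perform a change of variables from $H_v$ to $\partial U \setminus N_v$. On each smooth piece $M_i$ of $\partial U$ with continuous nontangent normal, the projection $p_v : M_i \to H_v$ is a local $C^1$ diffeomorphism with Jacobian $|\langle n(x), v\rangle|$ (the standard cosine law for orthogonal projection of a hypersurface onto $H_v$). Points $x \in \partial U$ with $\langle n(x), v\rangle > 0$ arise as right endpoints $b_i$ of the line slices ($L_y$ exits $U$ there), contributing $f(x^-)g(x^-)$; points with $\langle n(x), v\rangle < 0$ arise as left endpoints $a_i$, contributing $-f(x^+)g(x^+)$. Combining both signs yields the factor $|\langle n, v\rangle|$ together with the difference $f(x^-)g(x^-) - f(x^+)g(x^+)$ as stated.

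The main obstacle is the measure-theoretic bookkeeping required to avoid double-counting and to dispose of the various exceptional sets: the non-smooth part $N_v$, the tangent set $M^v$, and the overlaps $M_i \cap M_j$ between smooth pieces. Each has Hausdorff dimension at most $n-2$ on $\partial U$ by Assumption~\ref{ass:space}, so it contributes nothing to the surface integral; moreover, the set of $y \in H_v$ whose line $L_y$ meets one of these exceptional sets has Lebesgue measure zero in $H_v$, which legitimises the slice-by-slice argument. A secondary care point is that absolute continuity of $t\mapsto f(x+tv)$ on the relatively open set $U \cap (x+\R v)$ is exactly absolute continuity on each of its connected components, so the 1D integration by parts applies interval by interval without further hypothesis.
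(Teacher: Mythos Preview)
Your proposal is correct and follows essentially the same route as the paper: the paper deduces the proposition as a corollary of a single-function version (Theorem~\ref{thm:integrationbyparts}), whose proof is exactly the Fubini / one-dimensional fundamental theorem / coarea-type change of variables argument you describe, applied to the product $fg$. The only organisational difference is that the paper first establishes the formula for a single function $h$ with $h(y)\to 0$ at infinity and then specialises to $h=fg$, whereas you carry out the one-dimensional product integration by parts directly; the geometric bookkeeping (disposing of $N_v$, $M^v$, and the overlaps $M_i\cap M_j$ via their Hausdorff dimension, and inverting $p_v$ piecewise on each $M_i^\pm$) is identical in both.
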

\begin{proof}
The proof is a corollary of the next section.
\end{proof}

\subsection{Integration over open domain}

\begin{assumption}
    \label{ass:integration-geo}
    Let $U$ be an open set in $\R^n$ such that for each $v\in S^{n-1}$ (the unit sphere in $\R^{n}$),
    \begin{enumerate}
    	\item there exist $W_1\subset\overline W_1\subset\Omega_1,\dots,W_k\subset\overline W_k\subset\Omega_k$, $2k$ open sets in $\R^{n-1}$ with $\dim_H\partial W_i\leq n-2$ (these open sets may depend on $v$ because of \ref{ass:intersection-M-unique} of this assumption).\label{ass:integration-geo1}
    	\item there exist $\phi_i:\Omega_i\rightarrow\R^n$, $i=1,\dots k$, $C^1$ one to one maps such that the differential $D\phi_i(x)$ is one to one for all $x\in\Omega_i$. It implies that there is a  continuous normal $n_i:\Omega_i\rightarrow S^{n-1}$.\label{ass:integration-geo2} 
    	\item $\partial U=M_1\cup\dots\cup M_k$  where the sets $M_i=\phi_i(\overline W_i)$ are closed,\label{ass:integration-geo3}
    	\item $\dim_H M_i\cap M_j\leq n-2$ for all $i\neq j$.\label{ass:integration-geo4}
    	\item  Let $W_i^0=\{x\in  W_i : v\cdot n_i(x)=0\})$. For each $i$, $\dim_H p_v(M_i^0)\leq n-2$ where $M_i^0=\phi_i(W_i^0)$ and $p_v$ is the orthogonal projection on $H=H_v=v^{\perp}$.\label{ass:integration-geo5} 
    	\item Let $W_i^+=\{x\in W_i\in M_i:n_i(x)\cdot v>0\}$ and $W_i^-=\{x\in W_i:n_i(x)\cdot v<0\}$. For all $i$ and all $y\in\R^n$, the sets $M_i^{+}\cap(y+\R v)$ and $M_i^{-}\cap(y+\R v)$ have at most one element where  $M_i^{+}=\phi_i(W_i^+)$ and $M_i^{-}=\phi_i(W_i^-)$.
    	\label{ass:intersection-M-unique}
    \end{enumerate}
\end{assumption}

\begin{assumption}
    \label{ass:integration-fun}
    Let $f:U\rightarrow\R$ be a measurable function such that
    \begin{enumerate}
    	\item[i.] 
    	for each $y,v\in \R^{n}$, the function $t\mapsto f(y+tv)$ is absolutely continuous on every bounded interval $I$ such that $y+I v \subset U$
    	\label{ass:absolute-continuity}
    	\item[ii.] $\lim_{\|y\|\rightarrow\infty}f(y)=0$. That is, for any sequence $(y_n)\subset U$ with $\|y_n\| \to \infty$.
    	\item[iii.] If $U$ is not bounded, then for each $v\in \R^{n}$, $\partial_v f \in L^1(U)$.
    	\label{ass:f-derivative-bounded}
    \end{enumerate}
\end{assumption}

We extend $f$ to $\R^{n}\setminus \partial U$ with $f(y)=0$ for every $y\notin \overline U$. So that we can suppose that $U=\R^n\setminus \partial U$.

\begin{theorem} \label{thm:integrationbyparts}
    Let $U$ be an open set of $\mathbb R^n$ satisfying Assumption \ref{ass:integration-geo}, for some fixed $v \in \R^n$ with $\| v\|=1$.
	Let 
	\[
	\mathcal N_v=\big(\bigcup_{i=1}^k(\phi_i(\partial W_i)\cup M_i^0)\big)\bigcup\big(\bigcup_{1\leq i<j\leq k} M_i\cap M_j)\big)
	\]
	the set on which normals are ill-defined.
	Then for any $f$ satisfying Assumption \ref{ass:integration-fun}:
	\begin{enumerate}
	    \item $\dim_H \mathcal N_v\leq 2$ and $\dim_H p_v(\mathcal N_v)\leq n-2$;
	    \item for each $y\in\partial U\setminus \mathcal N_v$, the limits 
	\[
	\lim_{t\downarrow 0}f(y+tv)=f(y+) \text{ and } \lim_{t\uparrow 0}f(y+tv)=f(y-)
	\] 
	exist;
	   \item The normal $n(y)$ is well-defined at each point $y\in\partial U\setminus \mathcal N_v$ and   
	   \[
	\int_U\partial_v f(y)\dif y=\int_{\partial U\setminus \mathcal N_v}(f(y^-)-f(y^+))\,|n(y)\cdot v|\dif\sigma(y),
	\]
	where $\sigma$ is the Lebesgue measure on $\partial U$.
	\end{enumerate}
\end{theorem}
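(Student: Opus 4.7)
The plan is to apply Fubini along the direction $v$ to reduce the volume integral to one-dimensional integrals along lines $L_z = z + \R v$ (with $z \in H_v := v^\perp$), use the one-dimensional fundamental theorem of calculus on each line via the absolute continuity hypothesis, and then convert the resulting endpoint contributions into a surface integral on $\partial U$ via the change of variables $p_v \circ \phi_i$, whose Jacobian is $|n(y) \cdot v|$. Concretely, Assumption~\ref{ass:integration-fun} part~iii and Fubini give
\[
\int_U \partial_v f(y)\dif y = \int_{H_v}\int_{L_z \cap U}\partial_v f(z+tv)\dif t\,\dif z,
\]
and the point will be that only those $z$ for which $L_z$ behaves cleanly relative to $\partial U$ -- equivalently $z \notin p_v(\mathcal N_v)$ -- contribute.

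For item (1) (I read the stated bound ``$\leq 2$'' as the intended ``$\leq n-2$''), $\mathcal N_v$ is a finite union of three kinds of set: images $\phi_i(\partial W_i)$, pairwise intersections $M_i \cap M_j$, and tangency images $M_i^0$. The first class has $\dim_H \leq n-2$ by \ref{ass:integration-geo1}--\ref{ass:integration-geo2} (since $\phi_i$ is locally Lipschitz), the second directly by \ref{ass:integration-geo4}, and the third by \ref{ass:integration-geo5}. The projection bound $\dim_H p_v(\mathcal N_v) \leq n-2$ then follows because $p_v$ is $1$-Lipschitz on the first two classes (preserving the dimension bound) and is explicitly handled by \ref{ass:integration-geo5} on the tangent class.

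For items (2) and (3), any $y \in \partial U \setminus \mathcal N_v$ lies in the interior image of exactly one chart $\phi_i$ and has $n(y) \cdot v \neq 0$, so the line $L_z$ through $y$ (with $z = p_v(y)$) meets $\partial U$ transversally at $y$; the inverse function theorem then shows that $L_z \cap U$ near $y$ consists of two open half-intervals abutting $y$, whence $f(y^\pm)$ exist by absolute continuity (Assumption~\ref{ass:integration-fun} part~i). Globally, for $z \notin p_v(\mathcal N_v)$, condition~\ref{ass:intersection-M-unique} makes $L_z \cap \partial U$ finite and decomposes $L_z \cap U$ into finitely many open intervals $(a_j,b_j)$. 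The one-dimensional FTC on each yields
\[
\int_{a_j}^{b_j}\partial_v f(z+tv)\dif t = f((z+b_j v)^-) - f((z + a_j v)^+),
\]
with infinite endpoints vanishing by Assumption~\ref{ass:integration-fun} part~ii and the convention $f \equiv 0$ off $\overline U$. Summing over $j$, that same convention forces each crossing $y$ to contribute $f(y^-)-f(y^+)$ uniformly, irrespective of whether $L_z$ enters or exits $U$ at $y$, since the ``outside'' one-sided limit is automatically zero.

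Finally, I would pull the $\dif z$-integral on $H_v$ back to $\partial U \setminus \mathcal N_v$ via the $C^1$ injections $p_v \circ \phi_i : W_i^\pm \to H_v$, which are injective by \ref{ass:intersection-M-unique}. The standard area formula $\det D(p_v\circ\phi_i) = |n \cdot v|\sqrt{\det g_i}$, with $g_i$ the induced metric on $M_i$, introduces the advertised Jacobian $|n(y) \cdot v|$, delivering the identity. The main obstacle I anticipate is the sign-bookkeeping in the reassembly step: one must verify carefully that, after extending $f$ by zero outside $\overline U$, the signed 1D endpoint contributions collapse per crossing into the unsigned quantity $f(y^-) - f(y^+)$ weighted by the \emph{absolute} value $|n(y) \cdot v|$, rather than a signed variant that would depend on the entry/exit orientation of each $L_z$. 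Once that reduction is isolated, the remaining steps (Fubini, the transverse-crossing local picture, and the area formula) are all standard.
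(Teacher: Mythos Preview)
Your proposal is correct and follows essentially the same route as the paper: Fubini along $v$ to reduce to lines $z+\R v$ with $z\in H_v$, the one-dimensional fundamental theorem of calculus on each connected component of $(z+\R v)\cap U$ via absolute continuity, the extension of $f$ by zero off $\overline U$ to collapse the signed endpoint telescoping into the uniform contribution $f(y^-)-f(y^+)$ per crossing, and finally the change of variables through the bijections $p_v\circ\phi_i:W_i^\pm\to H_v$ to recover the surface measure with Jacobian $|n(y)\cdot v|$. The paper organises the last step with explicit index sets $I=\{1,\dots,k\}\times\{+,-\}$ and strata $V_J=\{z:J(z)=J\}$ to make the reassembly over charts precise, but this is bookkeeping rather than a different idea; your anticipated ``sign-bookkeeping'' obstacle is handled exactly as you suggest, by the zero-extension convention.
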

We can use the theorem with a product $f=g\pi$ where 
\begin{itemize}
	\item 
$g:U\rightarrow\R$ is measurable, bounded,  absolutely continuous on each sets $U\cap(y+\R v)$, $y\in\R^n$, and $\partial_t g\in L^1(U)$, 
\item $\pi:U\rightarrow\R$ is in $C^1(U)\cap L^1(U)$, bounded with bounded derivatives, $\lim_{\|y\|\rightarrow\infty}\pi(y)=0$, and the derivative $\partial_v\pi\in L^1(U)$.
\end{itemize}

\subsection{Proof of Theorem \ref{thm:integrationbyparts} }
For the first point, let
 \begin{align*}
 	\mathcal N_v&=\big(\bigcup_{i=1}^k(\phi_i(\partial W_i)\cup M_i^0)\big)\bigcup\big(\bigcup_{1\leq i<j\leq k} M_i\cap M_j)\big),\\
\mathcal N&=p_v^{-1}(p_v(\mathcal N_v)) \text{ and } V=H\setminus p_v(\mathcal N_v).
\end{align*}
By Assumptions \ref{ass:integration-geo}.\ref{ass:integration-geo1}, \ref{ass:integration-geo}.\ref{ass:integration-geo4} and \ref{ass:integration-geo}.\ref{ass:integration-geo5}, $\dim_H(p_v(\mathcal N_v))\leq n-2$, therefore $p_v(\mathcal N_v)$ has zero $H$-Lebesgue measure.

For the second point, the fact that $f(y+)$ and $f(y-)$ exist is a direct consequence of Asssumption \ref{ass:integration-fun}.\ref{ass:absolute-continuity}.

Finally we consider the third point. For all $z\in H$ denote
\[
	E(z)=\{t\in \R:z+tv\in\partial U\}.
\] 
By Assumption \ref{ass:integration-geo}.\ref{ass:intersection-M-unique}, the set $E(z)$ has $2k$ elements at most for all $z\in V$.
Set 
\begin{align*}
U'=U\setminus\mathcal N.
\end{align*}
Since $\dim_H(p_v(\mathcal N_v))\leq n-2$, it follows that $\dim_H \mathcal N\leq n-1$. Therefore,
\[
\int_U\partial_vf(y)\dif y=\int_{U'}\partial_vf(y)\dif y.
\]
By Fubini's theorem,
\begin{align*}
\int_{U'}\partial_v f(y)\dif y&=\int_V\left(\int_{\R\setminus  E(z)}\partial_tf(z+tv)\dif t\right)\dif z.
\end{align*}
By Assumption~\ref{ass:integration-fun}, for almost all $z\in V$ and for each connected component $(a,b)$ of $\R\setminus E(z)$,
\[
\int_{(a,b)}\partial_v f(z+tv)\dif t=f((z+tb)^-)-f((z+ta)^+).
\]
Taking into account that $\lim_{t\rightarrow\pm\infty}f(z+tv)=0$, we obtain 
\begin{align*}
	\int_{U}\partial_tf(y)\dif y
=\int_V\sum_{t\in E(z)}(f((z+tv)^-)-f((z+tv)^+))\dif z. 
\end{align*}
Now we want to see that the latter integral is equal to 
\[\int_{\partial U\setminus \mathcal N_v}(f(y^-)-f(y^+))\,|n(y)\cdot v|\dif\sigma(y).
\]
Set 
\begin{align*}
 I&=\{1,\dots,k\}\times\{+,-\},\\  
 J(z)&=\{(i,s)\in I:\exists t\in E(z), z+tv\in M_i^s\} \text{ for } z\in H,\\
 V_J&=\{z\in V: J(z)=J\} \text{ for } J\subset I, \text{ and }\\
 V_i^s&=p_v(M_i^s) \text{ for } (i,s)\in I.
\end{align*}
By Assumption \ref{ass:integration-geo}.\ref{ass:intersection-M-unique}, for each $(i,s)\in I$, the map $p_v\circ \phi_i:W_i^s\rightarrow V_i^s$ is a bijection, so that we can define the map $F_i^s:V_i^s\rightarrow M_i^s$  by $F_i^s(z)=\phi_i((p_v\circ \phi_i)^{-1}(z))$. 
Using the definition of the set $V=H\setminus\mathcal N$, we see that  for each $z\in V$ and each $t\in E(z)$, there exists $(i,s)\in I$ unique such that $z+tv=F_i^s(z)\in M_i^s$. Furtheremore, for each $(i,s)\in I$, $V_i^s\cap V=\cup_{J\ni (i,s)}V_J$. It follows that
\begin{align*}
    \int_U\partial_t f(y)\dif y&=\int_V\sum_{t\in E(z)}(f((z+tv)^-)-f((z+tv)^+))\dif z\\
    &=\sum_{J\subset I}\int_{V_J} \sum_{t\in E(z)}(f((z+tv)^-)-f((z+tv)^+))\dif z\\
    &=\sum_{J\subset I}\int_{V_J} \sum_{(i,s)\in J}(f(F_i^s(z)^-)-f(F_i^s(z)^+))\dif z\\
    &=\sum_{(i,s)\in I}\sum_{J\ni (i,s)} \int_{V_J}(f(F_i^s(z)^-)-f(F_i^s(z)^+))\dif z\\
    &=\sum_{(i,s)\in I} \int_{V_i^s\cap V}(f(F_i^s(z)^-)-f(F_i^s(z)^+))\dif z.
\end{align*}
Since the differential of each $\phi_i$ is always one to one and since $D\phi_i(x)(u).u$ is never orthogonal to $v$ for $x\in W_i^s $ and $u\neq 0$, the local inverse function theorem implies that the maps $F_i^s$ are $C^1$. Furthermore, the image of $F_i^s$ is $M_i^s$ and the normal to $M_i^s$ at $y=F_i^s(z)$ is $n(y)=\pm n_i((p_v\circ \phi)^{-1}(z))$. Therefore,
\[
\int_{V_i^s\cap V}(f(F_i^s(z)^-)-f(F_i^s(z)^+))\dif z=\int_{M_i^s\setminus\mathcal N_v}(f(y^-)-f(y^+))|n(y)|\dif\sigma(y).
\]
Finally, since $\partial U\setminus \mathcal N_v=\cup_{(i,s)\in I}(M_i^s\setminus \mathcal N_v)$,
\begin{align*}
    \sum_{(i,s)\in I} \int_{V_i^s\cap V}(f(F_i^s(z)^-)-f(F_i^s(z)^+))\dif z=\int_{\partial U\setminus \mathcal N_v}(f(y^-)-f(y^+))|n(y)|\dif\sigma(y).
\end{align*}

\section{Validity of transition kernels derived from limiting behaviour}
\label{app:Q_proofs}

\subsection{Bouncy Particle Sampler: Proof of Proposition~\ref{prop:BPS} }

For the specified $Q_{\BPS}$, we first derive the form of the associated probability kernel on velocities, $Q'_x$, remembering that $Q'_x$ is obtained by flipping the velocity and then applying the transition defined by $Q_{\BPS}$. This gives
\[
Q'_x(v'|v)=\left\{\begin{array}{cl}
1 &
v'= -v \mbox{ and } v\in\mathcal{V}_x^-,\\
\pi_{k_1(x)}(x)/\pi_{k_2(x)}(x) & v'= -v \mbox{ and } v\in\mathcal{V}_x^+,\\
1-\pi_{k_1(x)}(x)/\pi_{k_2(x)}(x) & v'= -v + 2 \langle n,v\rangle n \mbox{ and } v\in\mathcal{V}_x^+.
\end{array}
\right.
\]

The transition kernel $Q'_x$ allows for two possible transitions, either $v'=- v$  or $v'=v - 2 \langle n,v\rangle n$. These transitions have the following properties:
\begin{enumerate}[label=(\roman*)]
\item In the first case if $v \in
\mathcal{V}_x^+$ then $v' \in 
\mathcal{V}_x^-$, and vice versa. While for the second case if $v\in \mathcal{V}_x^+$ then $v'\in \mathcal{V}_x^+$. 
\item For either transition, $\langle v',v'\rangle=\langle v,v \rangle$, and $|\langle n,v'\rangle|=|\langle n,v\rangle|$. Furthermore by the spherical symmetry of $p(v)$ for the Bouncy Particle Sampler the first of these means that $p(v)=p(v')$.\label{appC:ii}
\end{enumerate}

We need to show that $l_x(v')=\sum Q'_x(v'|v)l_x(v)$, where 
\[
   l_x(v) =\left\{ \begin{array}{cl} |\langle n,v \rangle| p(v) \pi_{k_2(x)}(x) & \forall v\in \mathcal{V}_x^+,\\
    |\langle n,v \rangle| p(v) \pi_{k_1(x)}(x) & \forall v\in \mathcal{V}_x^-. \end{array} \right.
\]
We will show this holds first for $v'\in \mathcal{V}_x^+$ and then for $v'\in \mathcal{V}_x^-$.

If $v'\in \mathcal{V}_x^+$ then there are two possible transitions, from $v=-v' \in \mathcal{V}_x^-$ and from $v\in \mathcal{V}_x^+$ where $v=-v'+2\langle v',n \rangle n$. Let $v^*=-v'+2\langle v',n \rangle n$
\begin{eqnarray*}
\lefteqn{\sum Q'_x(v'|v)l_x(v)
= 1\cdot\left( |\langle n,-v' \rangle| p(-v') \pi_{k_1(x)}(x)\right)}\\ & & + 
\left(1-\frac{\pi_{k_1(x)}(x)}{\pi_{k_2(x)}(x)}\right)\left( 
|\langle n,v^* \rangle| p(v^*) \pi_{k_2(x)}(x)
\right),\\
&=&  |\langle n,v' \rangle| p(v') \left(\pi_{k_1(x)}(x) + \left(1-\frac{\pi_{k_1(x)}(x)}{\pi_{k_2(x)}(x)}\right) \pi_{k_2(x)}(x) \right).
\end{eqnarray*}
Where the last equality comes from applying Property~\ref{appC:ii} of the transition. The last expression simplifies to $l_x(v')$ as required.

For $v'\in \mathcal{V}_x^-$ we have only one transition and thus
\[
\sum Q'_x(v'|v)l_x(v) = \left(\frac{\pi_{k_1(x)}(x)}{\pi_{k_2(x)}(x)} \right)  \left( |\langle n,-v' \rangle| p(-v') \pi_{k_2(x)}(x)\right),
\]
which, again using Property~\ref{appC:ii}, is $l_x(v')$. 

\subsection{Coordinate Sampler: Proof of Proposition~\ref{prop:CS} }

We follow a similar argument to that of the previous section. First we write down the form of $Q'_x$ derived from $Q_{\CS}$:
\[
Q'_x(v'|v)=\left\{\begin{array}{cl}
1 &
v'= -v \mbox{ and } v\in\mathcal{V}_x^-,\\
\pi_{k_1(x)}(x)/\pi_{k_2(x)}(x) & v'= -v \mbox{ and } v\in\mathcal{V}_x^+,\\
(1-\pi_{k_1(x)}(x)/\pi_{k_2(x)}(x))\frac{\langle v',n\rangle}{K} & v' \in\mathcal{V}_x^+ \mbox{ and } v\in\mathcal{V}_x^+,
\end{array}
\right.
\]
where $K=\sum_{v\in\mathcal{V}_x^+} |\langle n,v \rangle|$.

For the Coordinate Sampler, $p(v)=1/(2d)$ for each of the possible values for $v$. Now we need to show $l_x(v')=\sum Q'_x(v'|v)l_x(v)$. We will consider the case $v'\in \mathcal{V}_x^+$ and $v'\in\mathcal{V}_x^-$ separately. For the latter case, the argument is the same as for the Bouncy Particle Sampler. Thus we just present the case for $v'\in \mathcal{V}_x^+$.
\begin{eqnarray*}
\lefteqn{\sum Q'_x(v'|v)l_x(v)
= 1\cdot\left( |\langle n,-v' \rangle| p(-v') \pi_{k_1(x)}(x)\right)}\\ & & + 
\langle v',n \rangle \left(1-\frac{\pi_{k_1(x)}(x)}{\pi_{k_2(x)}(x)}\right)
\sum_{v\in\mathcal{V}_x^+} \frac{|\langle n,v \rangle|}{K} p(v) \pi_{k_2(x)}(x)
\\
&=& p(v') \langle v',n \rangle 
\left( \pi_{k_1(x)}(x) + \left(1-\frac{\pi_{k_1(x)}(x)}{\pi_{k_2(x)}(x)}\right)
\pi_{k_2(x)}(x)\sum_{v\in\mathcal{V}_x^+} \frac{|\langle n,v \rangle|}{K}
\right) \\
&= & p(v') \langle v',n \rangle  \left( \pi_{k_1(x)}(x) + \left(1-\frac{\pi_{k_1(x)}(x)}{\pi_{k_2(x)}(x)}\right)
\pi_{k_2(x)}(x)
\right).
\end{eqnarray*}
The second equality comes from the fact that $p(v)$ is constant for all $v\in\mathcal{V}$. The third equality comes from the definition of $K$. 

\subsection{Zig-Zag Sampler: Proof of Proposition \ref{prop:ZZ} }

In the following we will set $k=1$ for implementing the algorithm that defines $Q_{\ZZ}$.
We need to show that $Q'_x$ keeps $l_x$ invariant. We will prove this by showing the following stronger detailed balance condition holds:
\[
l_x(v)Q'_x(v'|v)=l_x(v')Q'_x(v|v'), \quad\forall v,v'\in \mathcal{V}.
\]
As $Q'_x(v'|v)=Q_{\ZZ}(v'|-v)$, then writing the detailed balance condition for pairs $-v$ and $v'$ we have that it suffices to show
\[
l_x(-v)Q_{\ZZ}(v'|v)=l_x(v')Q_{\ZZ}(-v|-v'), \quad\forall v,v'\in \mathcal{V}.
\]

By a slight abuse of notation let $k(v)=k_1(x)$ if $v\in\mathcal{V}^-_x$ and $k(v)=k_2(x)$ if $v\in\mathcal{V}^+_x$. Then we can write $l_x(v)=|\langle n,v \rangle|p(v)\pi_{k(v)}(x)$. Thus using the fact that $p(v)$ defines a uniform distribution on $\mathcal{V}$, the detailed balance condition simplifies to
\[
|\langle n,v \rangle| \pi_{k(-v)}(x)Q_{\ZZ}(v'|v)=
|\langle n,v' \rangle| \pi_{k(v')}(x)Q_{\ZZ}(-v|-v'), \quad\forall v,v'\in \mathcal{V}.
\]
This can be viewed as matching the probability we have a velocity $v$ and transition to $v'$ with one where we flip the velocities: starting at $-v'$ and transition to $-v$.

We show that the detailed balance condition holds separately for different combinations of whether $v\in\mathcal{V}_x^+$ or $v\in\mathcal{V}_x^-$ and whether $v'\in\mathcal{V}_x^+$ or $v'\in\mathcal{V}_x^-$.

First assume $v\in\mathcal{V}_x^+$ and $v'\in\mathcal{V}_x^-$. This corresponds to a trajectory that is moving from the lower to the higher density region, but that reflects off the boundary and stays in the lower density region. It is straightforward to see that the events that change the velocity only increase $\langle n,v \rangle$, the speed at which the trajectory moves through the boundary region to the higher density region. Thus  a transition from $\mathcal{V}_x^+$ to $\mathcal{V}_x^-$ is impossible, and $Q_{\ZZ}(v'|v)=0$. Similarly, $-v'\in\mathcal{V}_x^+$ and $-v\in\mathcal{V}_x^-$ so $Q_{\ZZ}(-v|-v')=0$. Hence the detailed balance conditions trivially hold in this case.

Next assume $v\in\mathcal{V}_x^-$ and $v'\in\mathcal{V}_x^+$. This corresponds to a trajectory that is moving from the higher to the lower density region, but that reflects off the boundary and stays in the higher density region. In this case $k(-v)=k(v')$ and thus the detailed balance condition becomes
\begin{equation} \label{eq:DB-+}
|\langle n, v\rangle| Q_{\ZZ}(v'|v) =
|\langle n, v' \rangle| Q_{\ZZ}(-v|-v'), \quad \forall v\in\mathcal{V}_x^-,~v'\in\mathcal{V}_x^+.
\end{equation}
To prove the detailed balance condition holds we will first obtain an expression for $Q_{\ZZ}(v'|v)$, and then introduce a coupling between a transition for $v$ to $v'$ and one from $-v'$ to $-v$ to link it to a similar expression for $Q_{\ZZ}(-v|-v')$. 

The randomness in the algorithm that defines $Q_{\ZZ}$ only comes through the randomness of the event times simulated in step~\ref{ZZ(a)} of Section~\ref{sec:ZZlim}. Remember that $\tau_i$ is the time at which component $i$ of the velocity would switch, if the trajectory is still within the boundary region. Each $\tau_i$ is (conditionally) independent of the others, and has an exponential distribution with rate $\max\{0, -n_iv_i\}$, where $n_i$ is the component of the $i$th coordinate of the unit normal $n$. If $n_iv_i\geq0$, then $\tau_i=\infty$.

It is helpful to introduce three sets of components.
\begin{itemize}
    \item Let $\mathcal{S}_1$ be the set of components $i$ such that $v'_i=v_i$ and $n_iv_i<0$.
   \item Let $\mathcal{S}_2$ be the set of components $i$ such that $v'_i=-v_i$. 
    \item Let $\mathcal{S}_3$ be the set of components $i$ such that $v'_i=v_i$ and $n_iv_i\geq0$.
\end{itemize}
So $\mathcal{S}_1$ is the set of components of the velocity $v$ that are moving the particle towards the low-density region, and are unchanged by the transition to $v'$; $\mathcal{S}_2$ is the set of components that flip during the transition from $v$ to $v'$; and $\mathcal{S}_3$ is the set of components of the velocity $v$ that are moving the particle towards the high-density region, and are unchanged by the transition to $v'$.

Only components $i$ of the velocity for which $n_iv_i<0$ can change during the transition from $v$ to $v'$. This means that if there exists $i\in\mathcal{S}_2$ such that $n_iv_i \geq 0$ then the transition from $v$ to $v'$ is impossible. By the same argument, the transition from $-v'$ to $v$ is impossible. Thus in this case $Q_{\ZZ}(v'|v)=Q_{\ZZ}(-v|-v')=0$ and detailed balance trivially holds. So in the following we will assume that $n_iv_i<0$ for $i \in\mathcal{S}_2$. 

By a similar argument we have that the set $\mathcal{S}_1$ is the set of indices of the velocity that could change during the transition from $v$ to $v'$, but did not. Whereas $\mathcal{S}_3$ are the set of indices of the velocity that could never have changed during the transition.

To ease notation let $m=|\mathcal{S}_2|$, the number of indices in set $\mathcal{S}_2$, and note that $m\geq 1$ as $v\neq v'$. Without loss of generality we can relabel the coordinates so that $\mathcal{S}_2=\{1,\ldots,m\}$, and we will use $\tau_{1:m}$ to denote the vector of event times for the coordinates in $\mathcal{S}_2$.

We now introduce a function of time, $t$, that depends on $\tau_{1:n}$. This is
\[
h(t;\tau_{1:n})=
\sum_{i=1}^{m} n_iv_i(t - \max\{0,t-\tau_i\})
+\sum_{i \in \mathcal{S}_1} n_iv_i t
+ \sum_{i \in \mathcal{S}_3} n_iv_i t.
\] 
This can be viewed as the net distance travelled by the trajectory up to time $t$ in the direction of the normal $n$, given that only velocity coordinates in $\mathcal{S}_2$ can change, and these change at times $\tau_{1:m}$. This function is important as it determines when the trajectory leaves the boundary region, and determines the termination of the simulation algorithm in step~\ref{ZZ(b)}. As $v\in\mathcal{V}_x^-$ and $v'\in\mathcal{V}_x^+$, and the changes in velocity in the direction of $n$ is monotone as we flip components, we have that $h(t;\tau_{1:n})$ is strictly decreasing at $t=0$, strictly increasing for large enough $t$ and is unimodal. As $h(0;\tau_{1:m})=0$, this means that there is a unique $t^*(\tau_{1:m})>0$ such that $h(t^*(\tau_{1:m});\tau_{1:m})=0$. This is the exit time from the boundary region calculated in step~\ref{ZZ(b)} of the algorithm.

We can now define the set $\mathcal{T}$ of values of $\tau_{1:m}$ that are consistent with a transition from $v$ to $v'$. The conditions are that all components of the velocity must flip before $t^*$, and that the trajectory must not pass through the boundary region -- see the other stopping criteria in step~\ref{ZZ(b)} of the algorithm. This gives us that
\[
\mathcal{T}=
\left\{
\tau_{1:m} : \tau_i \leq t^*(\tau_{1:m}),~i=1,\ldots,m; ~~\min_{0<t<t^*(\tau_{1:m})} h(t;\tau_{1:n}) > -C
\right\}.
\]
The probability of a transition from $v$ to $v'$ is thus the probability $\tau_{1:m}\in\mathcal{T}$ times the probability that $\tau_i>t^*(\tau_{1:m})$ for $i\in\mathcal{S}_1$. As each $\tau_i$, $i \in \mathcal{S}_1$ or $i\in\mathcal{S}_2$, has an independent exponential distribution with rate $-n_iv_i$,
\[
Q_{\ZZ}(v'|v) =\int_\mathcal{T} \left(\prod_{i\in\mathcal{S}_1} \exp\{n_iv_it^*(\tau_{1:m})\} \right)\left(
\prod_{i=1}^m (-n_iv_i)\exp\{n_iv_i\tau_i\}
\dif \tau_{1:m}
\right).
\]

Now consider the reverse transition, from $-v'$ to $-v$. Under our existing definitions $\mathcal{S}_1$, $\mathcal{S}_2$ and $\mathcal{S}_3$, we have that $\mathcal{S}_2$ is still the set of indices that the flip for the transition from $-v'$ to $-v$, but now $\mathcal{S}_1$ is the set of components of the velocity that could never have flipped, while $\mathcal{S}_3$ is the set of components that could have flipped but did not. 

We can define the same quantities for the reverse transition from $-v'$ to $-v$. We will use tildes to denote quantities that relate to this transition. So $\tilde{\tau}_{1:m}$ will be the vector of flip times for components in $i\in\mathcal{S}_2$. We have
\[
\tilde{h}(t;\tilde{\tau}_{1:n})=
\sum_{i=1}^{m} n_iv_i(t - \max\{0,t-\tilde{\tau}_i\})
-\sum_{i \in \mathcal{S}_1} n_iv_i t
- \sum_{i \in \mathcal{S}_3} n_iv_i t,
\]
using the fact that $-v'_i=v_i$ for $i \in \mathcal{S}_2$ and $-v'_i=-v_i$ otherwise. By the same argument as above, there is a unique $\tilde{t}^*(\tilde{\tau}_{1:m})>0$ such that $\tilde{h}(\tilde{t}^*(\tilde{\tau}_{1:m});\tilde{\tau}_{1:m})=0$. The set of possible values of $\tilde{\tau}_{1:m}$ that are consistent with the transition from $-v'$ to $v$ is
\[
\tilde{\mathcal{T}}=
\left\{
\tilde{\tau}_{1:m} : \tilde{\tau}_i \leq \tilde{t}^*(\tilde{\tau}_{1:m}),~i=1,\ldots,m; ~~\min_{0<t<\tilde{t}^*(\tilde{\tau}_{1:m})} \tilde{h}(t;\tilde{\tau}_{1:n}) > -C
\right\}.
\]

Finally we can write down the transition probability as before, remembering that the rate of flipping for components $i\in\mathcal{S}_2$ is $-n_iv_i$ as before; but for $i\in\mathcal{S}_3$ it is $n_iv_i$. Thus
\begin{eqnarray} 
\lefteqn{Q_{\ZZ}(-v|-v') =} \nonumber\\
& &\left(\prod_{i\in\mathcal{S}_3} \exp\{-n_iv_i\tilde{t}^*(\tilde{\tau}_{1:m}) \right)\left(
\int_{\tilde{\mathcal{T}}}
\prod_{i=1}^m (-n_iv_i)\exp\{n_iv_i\tilde{\tau}_i\}
\dif \tilde{\tau}_{1:m}
\right). \label{eq:Qrev}
\end{eqnarray}

To relate the two transition probabilities, we introduce a coupling between $\tau_{1:m}$ and $\tilde{\tau}_{1:m}$, so
$\tilde{\tau}_{1:m}=g(\tau_{1:m})$, where
\[
\tilde{\tau}_i=g(\tau_{1:m})_i=t^*(\tau_{1:m})-\tau_i.
\]
This coupling is a natural one. If we consider the path through the boundary region given by $\tau_{1:m}$ that transitions from $v$ to $v'$, we can reverse that path to get a path that transitions from $-v'$ to $-v$. For the forward path a flip of component $i$ at time $\tau_i$ occurs at a time $t^*(\tau_{1:m})-\tau_i$ prior to the end of the path. Thus for the reverse path the flip would occur at time $t^*(\tau_{1:m})-\tau_i$.

It is straight forward to show that if $\tilde{\tau}_{1:m}=g(\tau_{1:m})$ then $h(t;\tau_{1:m})=\tilde{h}(t^*(\tau_{1:m})-t;\tilde{\tau}_{1:m})$. This result is intuitive; it is saying the distance of the forward trajectory within the boundary region at time $t$ is equal to the distance of the backward trajectory within the boundary region at time $t^*(\tau_{1:m})-t$. This immediately implies that
$t^*(\tau_{1:m})=\tilde{t}^*(\tilde{\tau}_{1:m})$, the exit time for the forward and backward trajectories are the same. Furthermore, if we consider the second constraint on $\tau_{1:m}$ in the definition of $\mathcal{T}$ then we have
\[
\min_{0<t<t^*(\tau_{1:m})} h(t;\tau_{1:n}) = 
\min_{0<t<\tilde{t}^*(\tilde{\tau}_{1:m})} \tilde{h}(t;\tilde{\tau}_{1:n}),
\]
for $\tilde{\tau}_{1:m}=g(\tau_{1:m})$. Together with the fact that $\tau_i\leq t^*(\tau_{1:m})$ then $\tilde{\tau}_{1:m}\leq \tilde{t}^*(\tilde{\tau}_{1:m})$. We have that the function $g$ maps $\tau_{1:m}\in\mathcal{T}$ to $\tilde{\tau}_{1:m}\in\tilde{\mathcal{T}}$. Furthermore, the function $g$ is invertible, and by similar arguments we have that $g^{-1}$ maps $\tilde{\tau}_{1:m}\in\tilde{\mathcal{T}}$ to $\tau_{1:m}\in\mathcal{T}$. Hence $g$ is a bijection from $\mathcal{T}$ to $\tilde{\mathcal{T}}$.

The function $g$ defines a linear map between $\tau_{1:m}$ and $\tilde{\tau}_{1:m}$. For $\tau_{1:m}\in\mathcal{T}$ we have that, by definition of $t^*(\tau_{1:m})$,
\begin{equation} \label{eq:t*}
\sum_{i=1}^m n_iv_i(2\tau_i-t^*(\tau_{1:m}))+\sum_{i=m+1}^d
n_iv_it^*(\tau_{1:m})=0.
\end{equation}
This gives that
\[
t^*(\tau_{1:m})=\sum_{i=1}^m \left( \frac{-2n_iv_i}{K} \right)\tau_i, \mbox{ where } K= -\sum_{i=1}^m n_iv_i+\sum_{i=m+1}^d n_iv_i.
\]
Furthermore, using that $v'$ is equal to $v$ except that $v_i$ is flipped for $i=1,\ldots,m$, $K=\langle v',n\rangle $.

Let $b_{1:m}$ be the $1\times m$ vector whose $i$th entry is $b_i=2v_in_i/K$. If we let $1_m$ denote the $1\times m$ vector of ones, and $I_m$ the $m\times m$ identity matrix then we have
\[
\tilde{\tau}_{1:m} = g(\tau_{1:m}) = (b_{1:m}1_{1:m}^\top-I_m)\tau_{1:m}=A\tau_{1:m},
\]
where the $m \times m$ matrix $A=(b_{1:m}1_{1:m}^\top-I_m)$. In the following argument we will make the change of variables $\tilde{\tau}_{1:m}=g(\tau_{1:m})=A\tau_{1:m}$, and we will need the determinant of the Jacobian of this transformation. Using the matrix determinant lemma, this is given by
\[
|\mbox{det}(A)|=|(1-1_{1:m}^\top b_{1:m})||\mbox{det}(-I_m)|=\left|
1-\sum_{i=1}^m b_i \right|.
\]
This simplifies to
\[
\left|
1-\sum_{i=1}^m b_i \right|=
\left|
1+\sum_{i=1}^m \frac{2v_in_i}{\langle v',n\rangle}
\right|=
\left|
\frac{\langle v',n\rangle+\sum_{i=1}^m 2v_in_i}{\langle v',n\rangle}
\right|=
\left|
\frac{\langle v,n\rangle}{\langle v',n\rangle}
\right|.
\]

So now, taking the definition of $Q_{\ZZ}(v'|v)$ and applying the change of variables $\tilde{\tau}_{1:m}=g(\tau_{1:m})$ we get
\begin{eqnarray*}
\lefteqn{Q_{\ZZ}(v'|v) = \int_\mathcal{T} \prod_{i\in\mathcal{S}_1} \exp\{n_iv_it^*(\tau_{1:m})\} 
\prod_{i=1}^m \left((-n_iv_i)\exp\{n_iv_i\tau_i\}\right)
\dif \tau_{1:m}}\\ 
&=& \int_{\tilde{\mathcal{T}}}
\exp\left\{\sum_{i\in\mathcal{S}_1} n_iv_i \tilde{t}^*(\tilde{\tau}_{1:m})\right\}
\left|\frac{\langle v,n\rangle}{\langle v',n\rangle}\right|
\prod_{i=1}^m \left((-n_iv_i)\exp\{n_iv_i(\tilde{t}^*(\tilde{\tau}_{1:m})-\tilde{\tau}_i)\}\right)
\dif \tilde{\tau}_{1:m} \\
&=&\left|\frac{\langle v,n\rangle}{\langle v',n\rangle}\right|
\int_{\tilde{\mathcal{T}}}
\left(\prod_{i=1}^m (-n_iv_i)\right)
\exp\left\{\sum_{i\in\mathcal{S}_1} n_iv_i \tilde{t}^*(\tilde{\tau}_{1:m})+ \sum_{i=1}^m n_iv_i(\tilde{t}^*(\tilde{\tau}_{1:m})-\tilde{\tau}_i)\right\}
\dif \tilde{\tau}_{1:m} \\
&=&\left|\frac{\langle v,n\rangle}{\langle v',n\rangle}\right|
\int_{\tilde{\mathcal{T}}}
\left(\prod_{i=1}^m (-n_iv_i)\right)
\exp\left\{\sum_{i\in\mathcal{S}_3} -n_iv_i \tilde{t}^*(\tilde{\tau}_{1:m})+\sum_{i=1}^m n_iv_i \tilde{\tau}_i  \right\}
\dif \tilde{\tau}_{1:m},
\end{eqnarray*}
where the final equality comes from the definition of $\tilde{t}^*(\tilde{\tau}_{1:m})=t^*(\tau_{1:m})$, using (\ref{eq:t*}) after substituting in $\tau_i=\tilde{t}^*(\tilde{\tau}_{1:m})-\tilde{\tau}_i$.

By comparing the final expression with (\ref{eq:Qrev}), we get that
\[
Q_{\ZZ}(v'|v) = \left|\frac{\langle v,n\rangle}{\langle v',n\rangle}\right| Q_{\ZZ}(-v|v'),
\]
which satisfies (\ref{eq:DB-+}) as required.

The final combination involve $v,v'\in\mathcal{V}_x^+$ and $-v',-v\in\mathcal{V}_x^-$, or vice versa. The detailed balance condition in this case becomes
\[
\pi_{k_1(x)}(x)|\langle n, v\rangle| Q_{\ZZ}(v'|v) =
\pi_{k_2(x)}(x)|\langle n, v' \rangle| Q_{\ZZ}(-v|-v'), \quad \forall v,v'\in\mathcal{V}_x^-.
\]
We can show this using a similar argument to above, with the same coupling of paths from $v$ to $v'$ with paths from $v'$ to $v$. The main differences are, first, that the definition of $\mathcal{T}$ is simplified to
\[
\left\{
\tau_{1:m} : \tau_i \leq t^*(\tau_{1:m}),~i=1,\ldots,m
\right\},
\]
as, by monotonicity of the changes in velocity, we do not need to check whether the other exit condition in step~\ref{ZZ(b)} holds. Second, that the definition of $t^*(\tau_{1:m})$ changes, with it being the value of $t$ for which $h(t;\tau_{1:m})=C$. For $\tau_{1:m}\in\mathcal{T}$,this  becomes
\[
\sum_{i=1}^m n_iv_i(2\tau_i-t^*(\tau_{1:m}))+
\sum_{i=m+1}^d n_iv_it^*(\tau_{1:m})=C
\]
due to the different exit condition in step~\ref{ZZ(b)}. We have similar changes to the definitions of $\tilde{\mathcal{T}}$ and $\tilde{t}^*(\tilde{\tau}_{1:m})$.

However we can define $Q_{\ZZ}(v'|v)$ and $Q_{\ZZ}(-v|-v')$ in a similar way. Furthermore we can use the same linear transformation $g$, which is still a bijection between $\mathcal{T}$ and $\tilde{\mathcal{T}}$. Whilst the definition $t^*$ has changed, this only introduces an additive constant into the linear transformation defined by $g$, and thus the Jacobian of the transformation is unchanged. Following the argument above we thus get to the same expression for $Q_{\ZZ}(v'|v)$ after making the change of variables:
\begin{eqnarray*}
\lefteqn{Q_{\ZZ}(v'|v)=\left|\frac{\langle v,n\rangle}{\langle v',n\rangle}\right|
\int_{\tilde{\mathcal{T}}}
\left(\prod_{i=1}^m (-n_iv_i)\right) }\\
&\times&
\exp\left\{\sum_{i\in\mathcal{S}_1} n_iv_i \tilde{t}^*(\tilde{\tau}_{1:m})+ \sum_{i=1}^m n_iv_i(\tilde{t}^*(\tilde{\tau}_{1:m})-\tilde{\tau}_i)\right\}
\dif \tilde{\tau}_{1:m}.
\end{eqnarray*}
Now substituting in our new definition of $\tilde{t}^*(\tilde{\tau}_{1:m})=t^*(\tau_{1:m})$ we get
\[
Q_{\ZZ}(v'|v)=\left|\frac{\langle v,n\rangle}{\langle v',n\rangle}\right| Q_{\ZZ(-v|v'} \exp\{C\},
\]
where the additional factor of $\exp\{C\}$ is due to the different definition of $t^*$. As $C=\log(\pi_{k_2(x)}(x)/\pi_{k_1(x)}(x) )$ this rearranges to
\[
\pi_{k_1(x)}(x)|\langle v',n\rangle|Q_{\ZZ}(v'|v) =
\pi_{k_2(x)}(x)|\langle v,n\rangle|Q_{\ZZ}(-v|v'),
\]
as required.

\section{Additional Simulation Results} \label{sec:add_results}

Figures~\ref{fig:BPS-trajectories}--\ref{fig:trajectories-ZZ} show trajectories for the Bouncy Particle Sampler, the Coordindate Sampler and the Zig-Zag Process for dimensions $d=2$, 10, 100 for the sampling from a Gaussian restricted to a cube. Figure~\ref{fig:trajectories-ZZ2} shows trajectories for the Zig-Zag Process if we use the canonical basis -- in this case the distribution of all coordinates are independent, and the Zig-Zag Process benefits from this by being able to run independent dynamics for each coordinates.

\begin{figure}
    \centering
    \begin{tabular}{|c | c | c | c|}
    \hline
    & dim = 2 & dim = 10 & dim = 100 \\
    \hline
    \rotatebox{90}{BPS Limit} & \includegraphics[width=3.5cm]{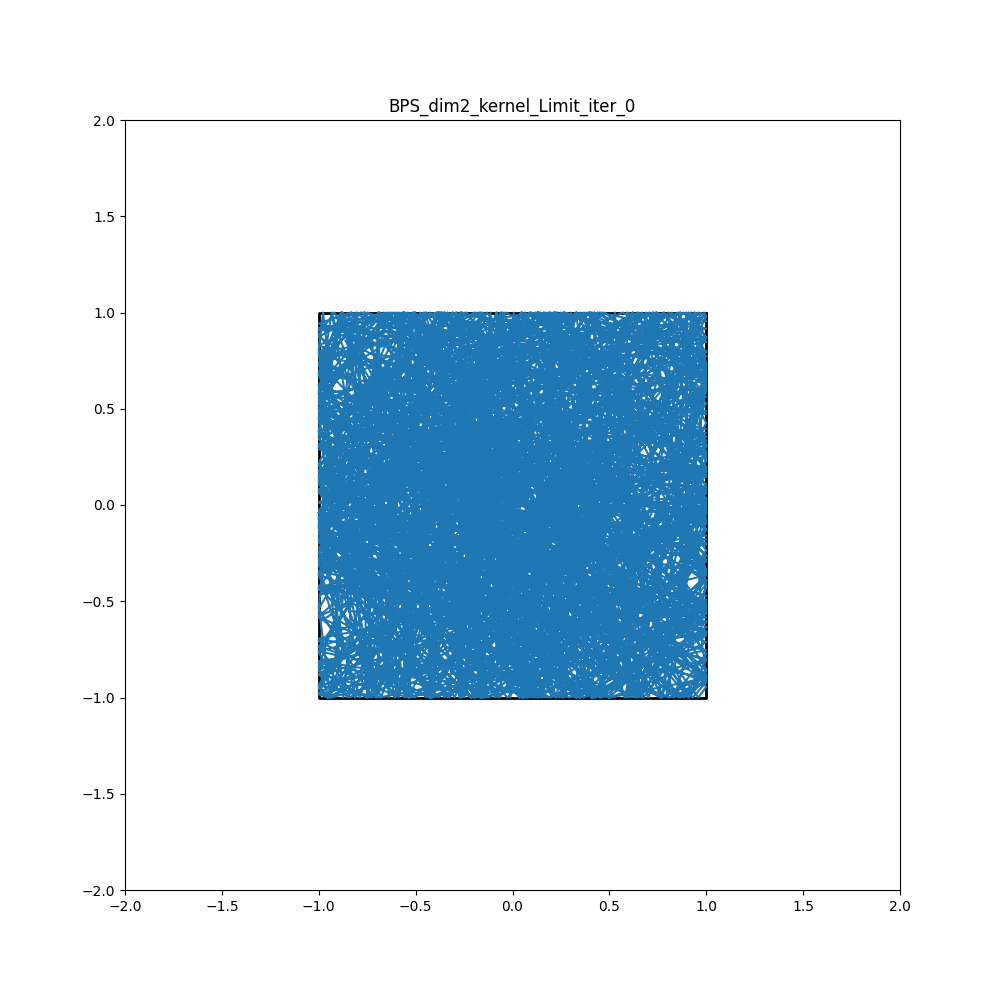}&
    \includegraphics[width=3.5cm]{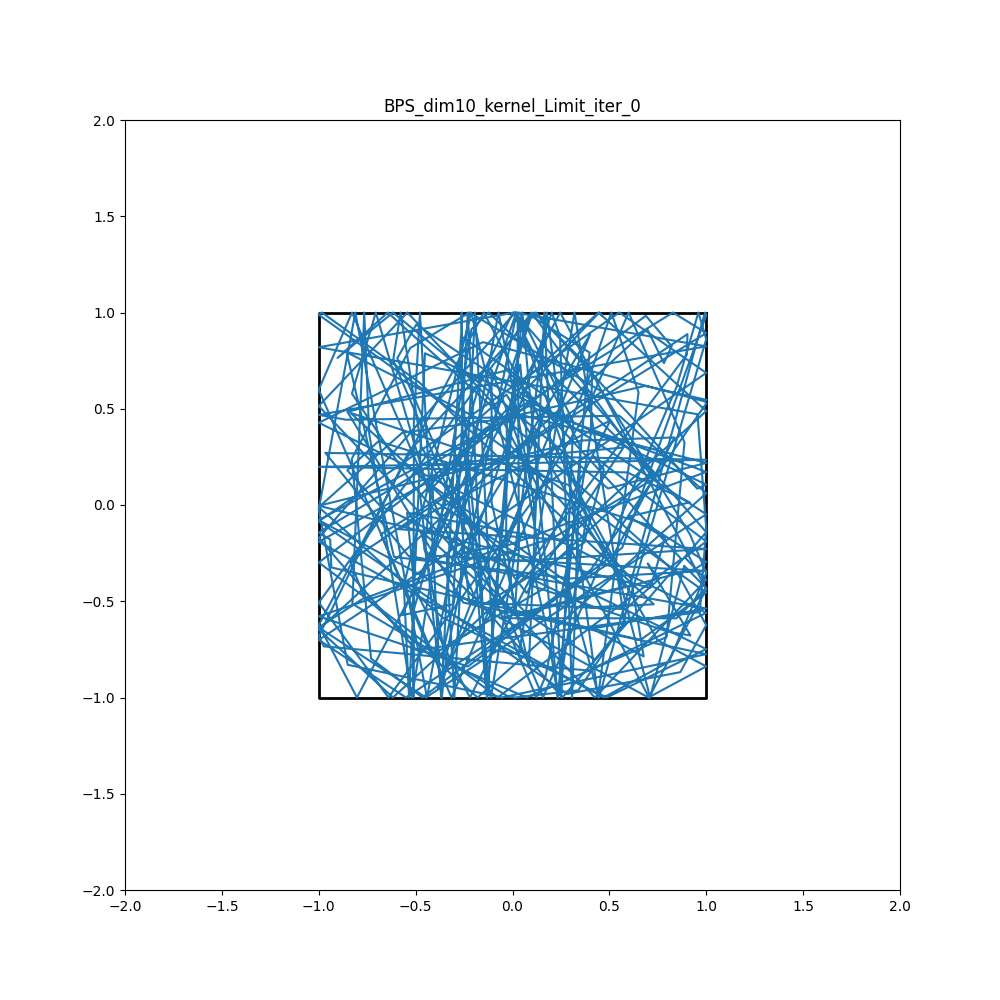} & \includegraphics[width=3.5cm]{BPS_dim100_kernel_Limit_iter_0.png} \\
    \hline
    \rotatebox{90}{BPS Metropolis 1} & \includegraphics[width=3.5cm]{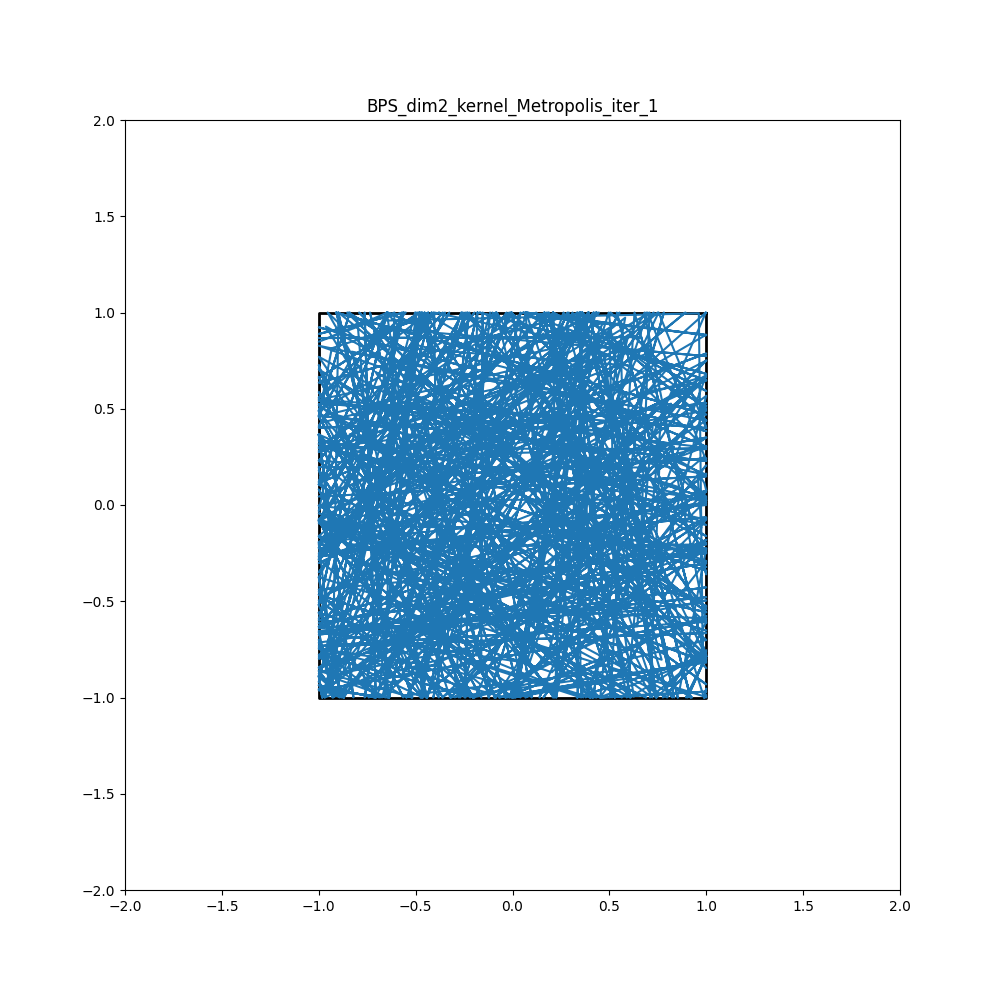}&
    \includegraphics[width=3.5cm]{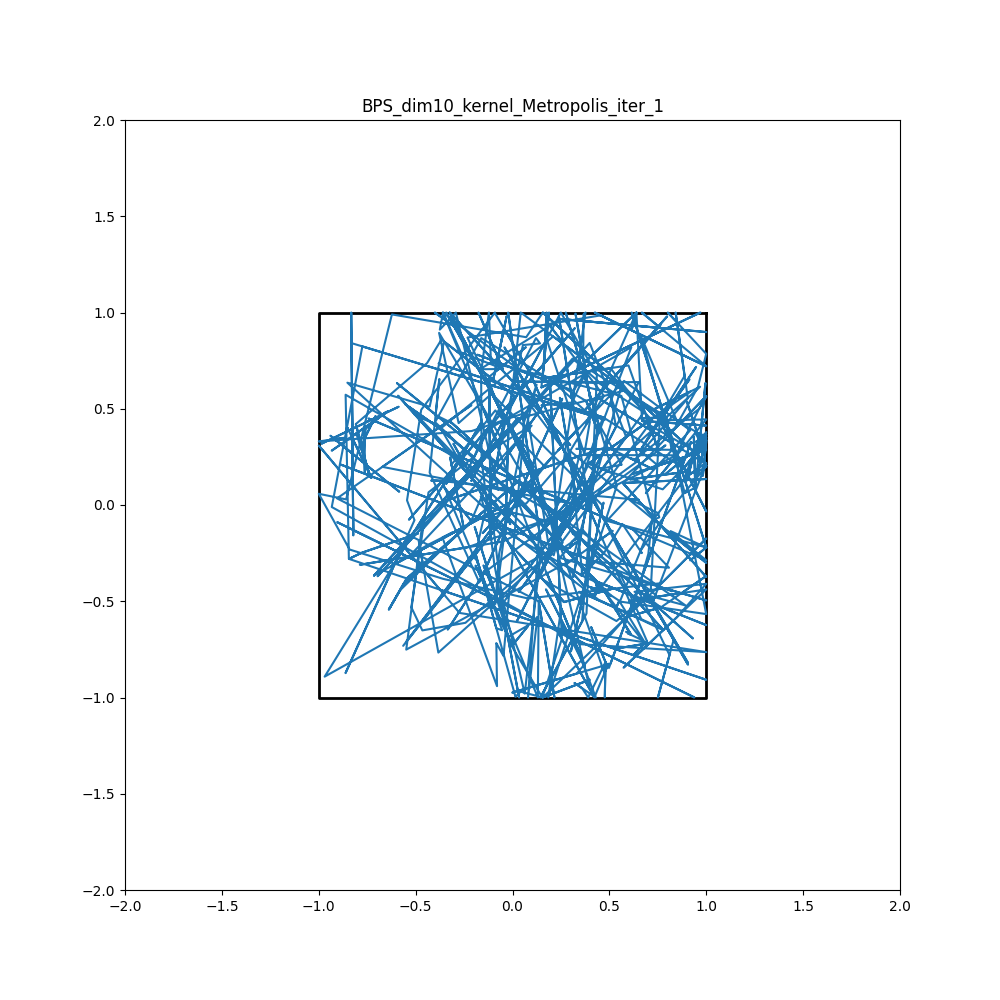} & \includegraphics[width=3.5cm]{BPS_dim100_kernel_Metropolis_iter_1.png} \\
    \hline
    \rotatebox{90}{BPS Metropolis 100} & \includegraphics[width=3.5cm]{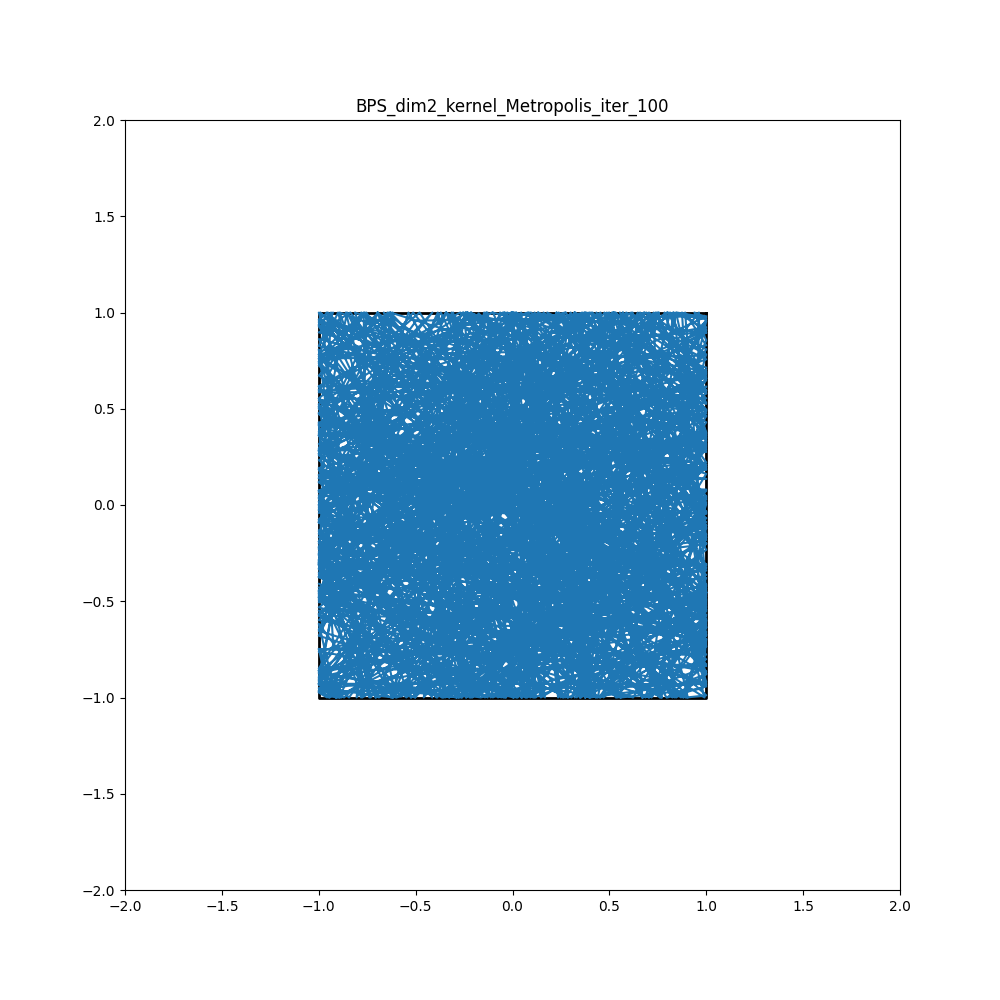}&
    \includegraphics[width=3.5cm]{BPS_dim10_kernel_Metropolis_iter_1.png} & \includegraphics[width=3.5cm]{BPS_dim100_kernel_Metropolis_iter_100.png} \\  
    \hline
    \end{tabular}
    \caption{Example trajectories for the Bouncy Particle Sampler for simulating from a $d$-dimensional Gaussian distribution restricted to a cube, for $d=$2, 10, 100; and for different transitions on the boundary. For $d=10$, 100 we show the dynamics for the first two coordinates only. The different transitions correspond to the limiting behaviour, $Q_{\BPS}$ of Section~\ref{sec:BPSlim} (top); using a single Metropolis-Hastings step to sample from $l_x$ (middle); and using 100 Metropolis-Hastings steps to sample from $l_x$ (bottom).}
    \label{fig:BPS-trajectories}
\end{figure}

\begin{figure}
    \centering
    \begin{tabular}{|c | c | c | c|}
    \hline
    & dim = 2 & dim = 10 & dim = 100 \\
    \hline
    \rotatebox{90}{CS Limit} & \includegraphics[width=3.5cm]{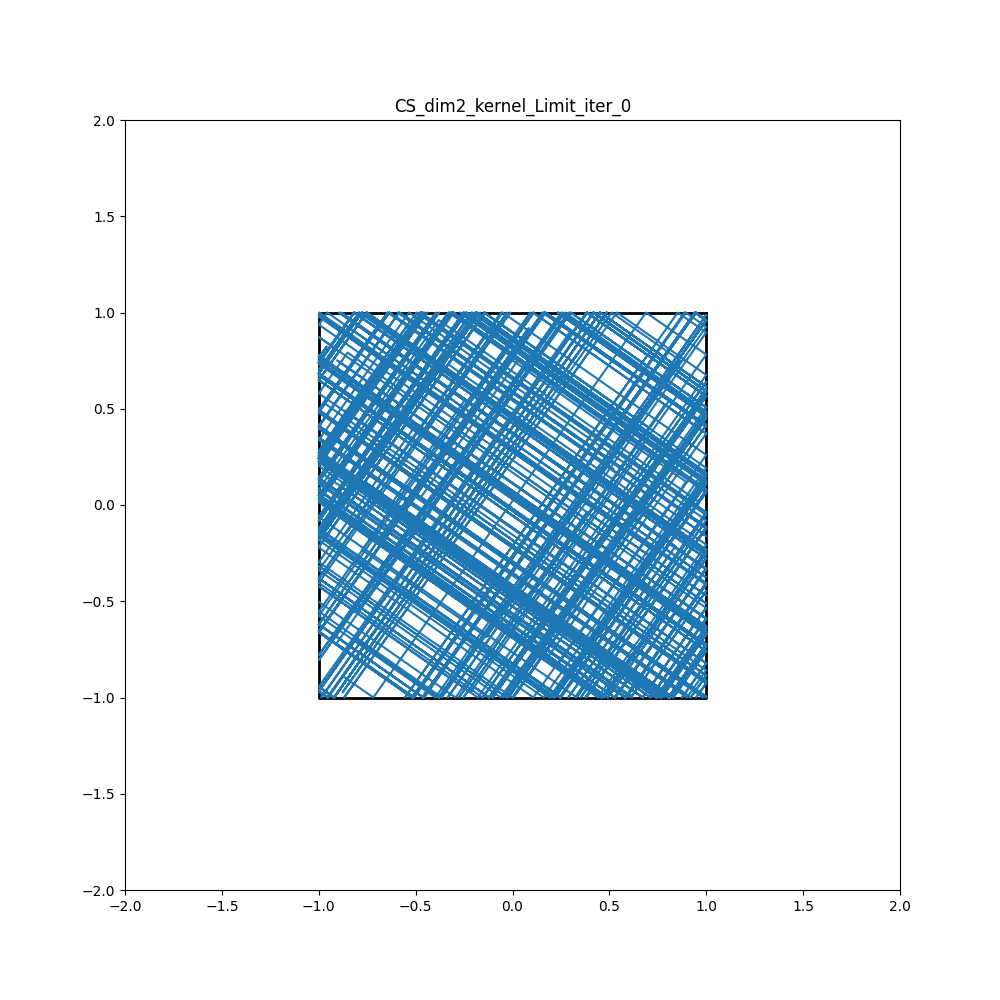}&
    \includegraphics[width=3.5cm]{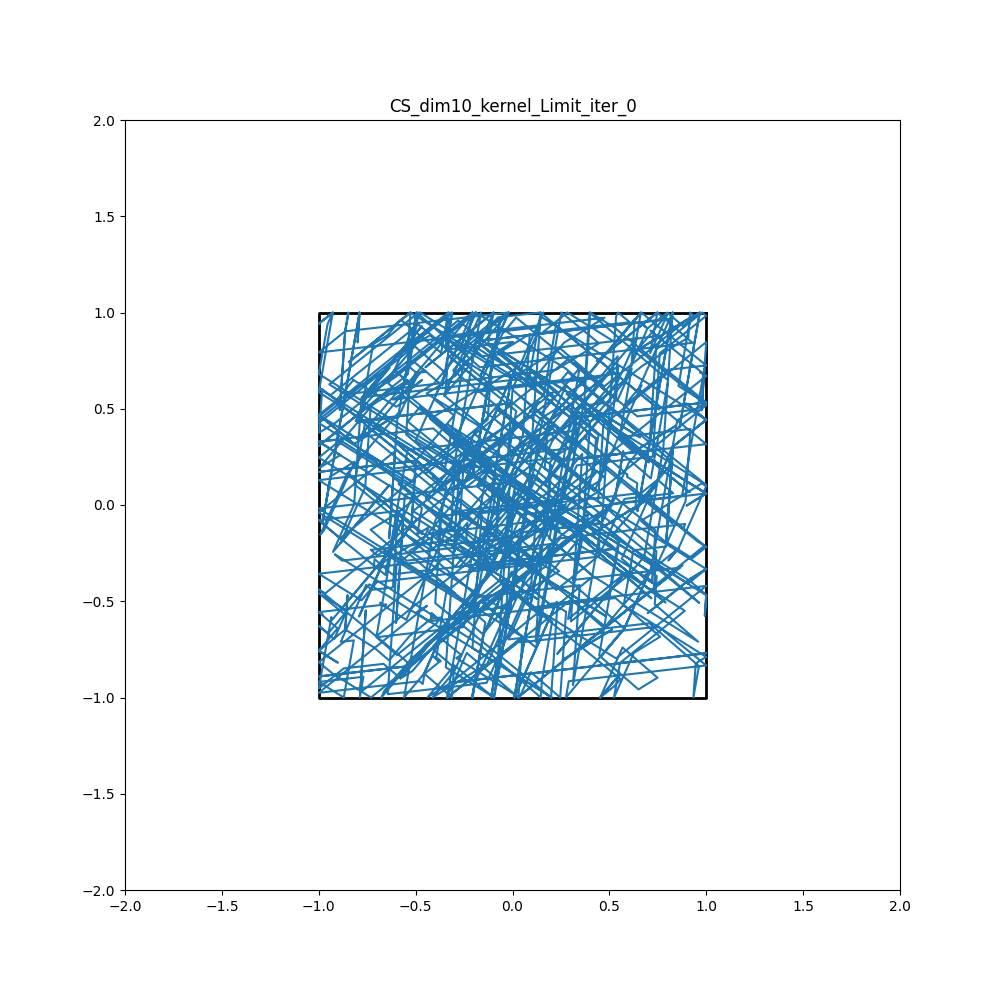} & \includegraphics[width=3.5cm]{CS_dim100_kernel_Limit_iter_0.png} \\
    \hline
    \rotatebox{90}{CS Metropolis 1} & \includegraphics[width=3.5cm]{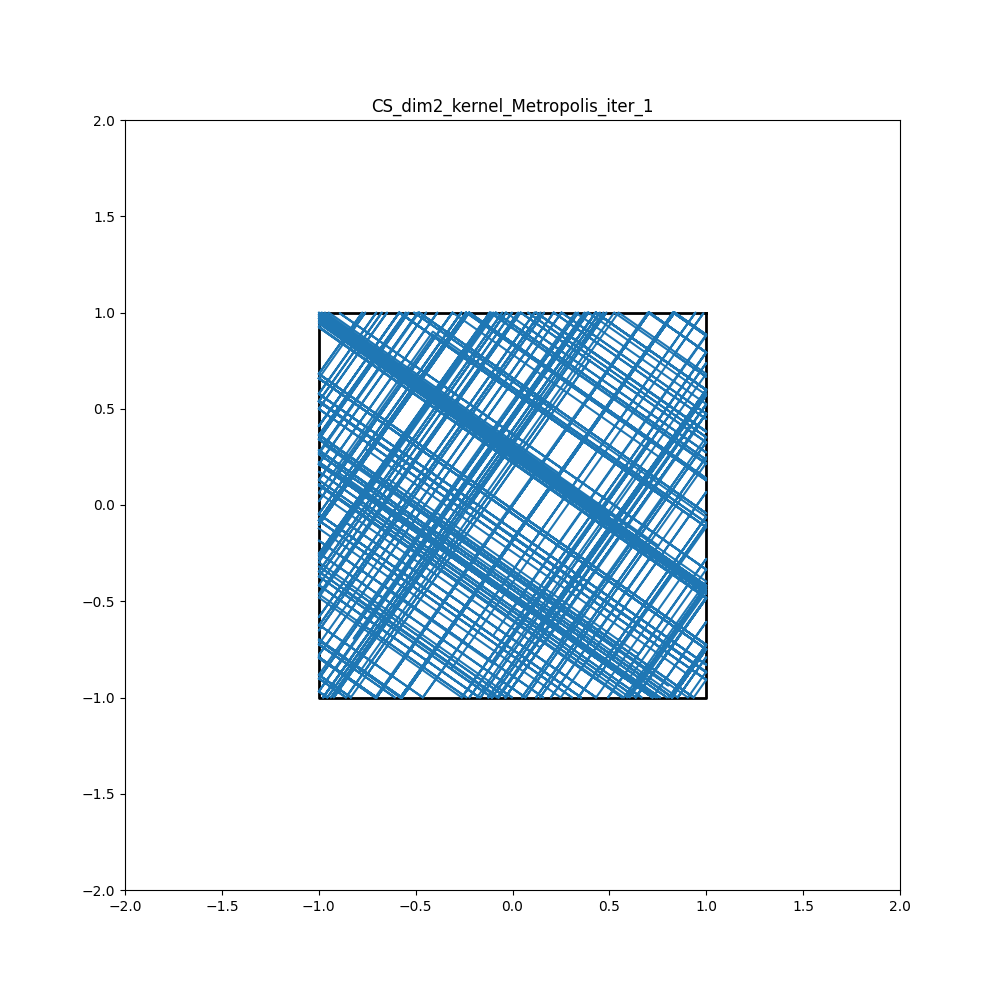}&
    \includegraphics[width=3.5cm]{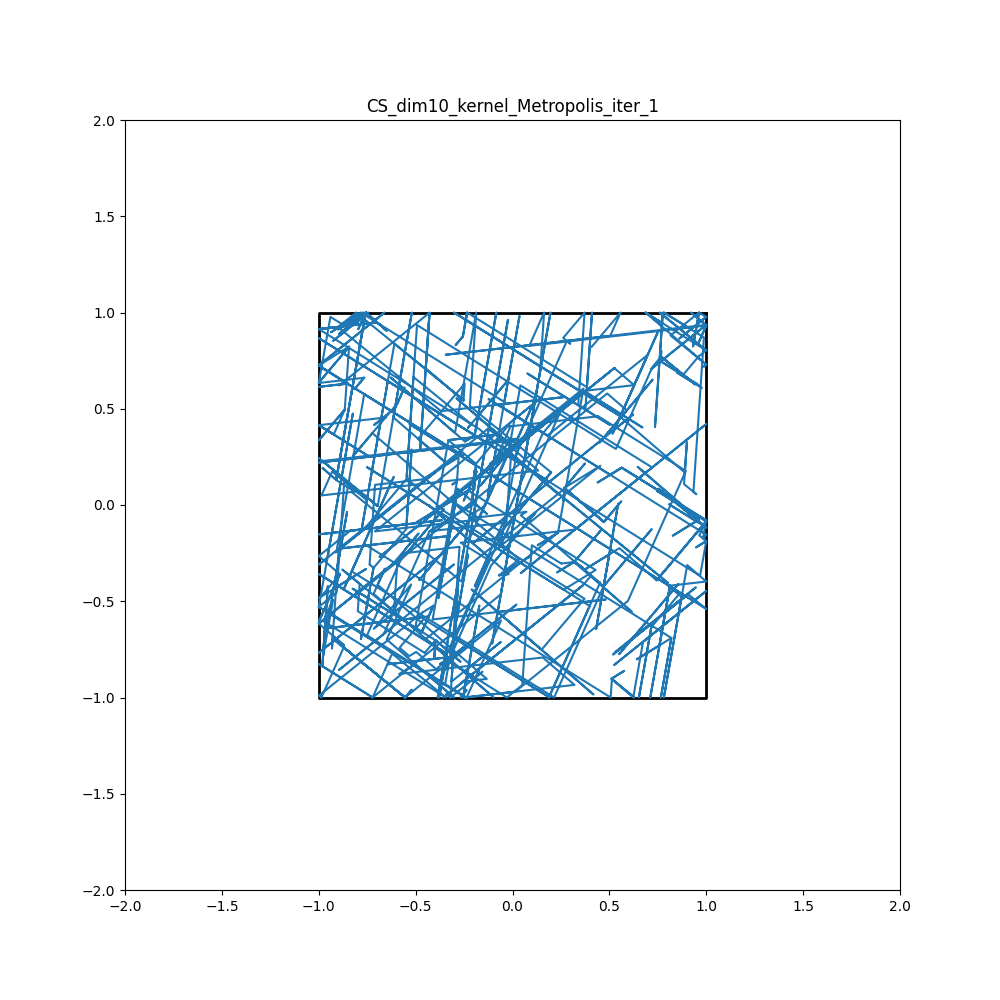} & \includegraphics[width=3.5cm]{CS_dim100_kernel_Metropolis_iter_1.png} \\
    \hline
    \rotatebox{90}{CS Metropolis 100} & \includegraphics[width=3.5cm]{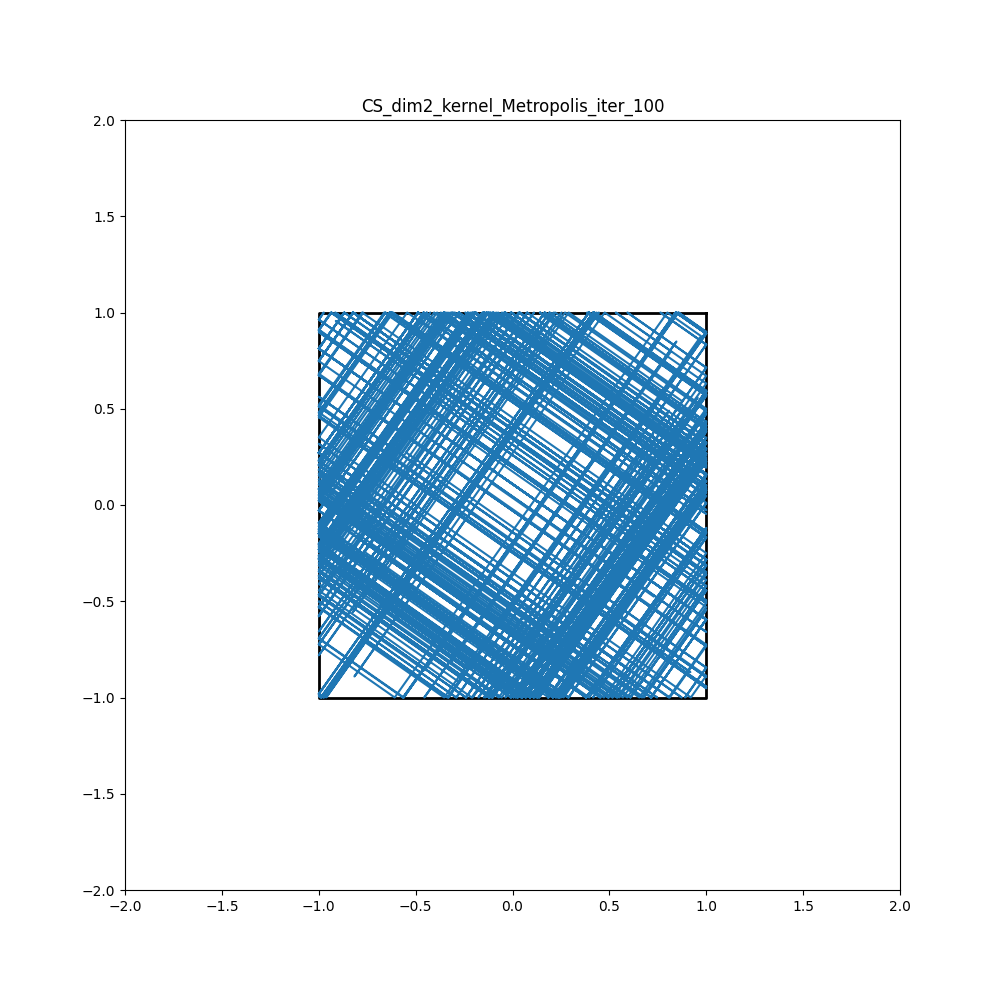}&
    \includegraphics[width=3.5cm]{CS_dim10_kernel_Metropolis_iter_1.png} & \includegraphics[width=3.5cm]{CS_dim100_kernel_Metropolis_iter_100.png} \\  
    \hline
    \end{tabular}
    \caption{Example trajectories for the Coordinate Sampler for simulating from a $d$-dimensional Gaussian distribution restricted to a cube, for $d=$2, 10, 100; and for different transitions on the boundary. For $d=10$, 100 we show the dynamics for the first two coordinates only. The different transitions correspond to the limiting behaviour, $Q_{\CS}$ of Section~\ref{sec:CSlim} (top); using a single Metropolis-Hastings step to sample from $l_x$ (middle); and using 100 Metropolis-Hastings steps to sample from $l_x$ (bottom).}
    \label{fig:trajectories-CS}
\end{figure}

\begin{figure}
    \centering
    \begin{tabular}{|c | c | c | c|}
    \hline
    & dim = 2 & dim = 10 & dim = 100 \\
    \hline
    \rotatebox{90}{ZZ Limit} & \includegraphics[width=3.5cm]{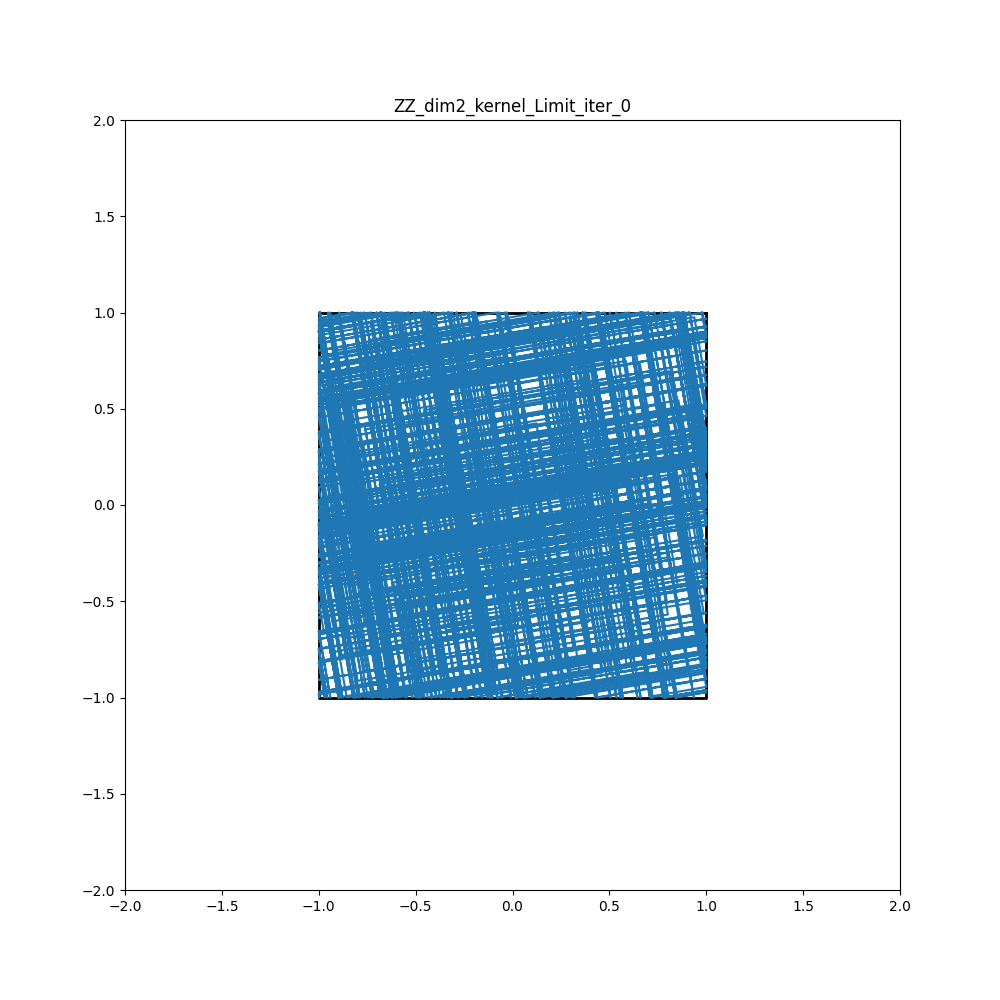}&
    \includegraphics[width=3.5cm]{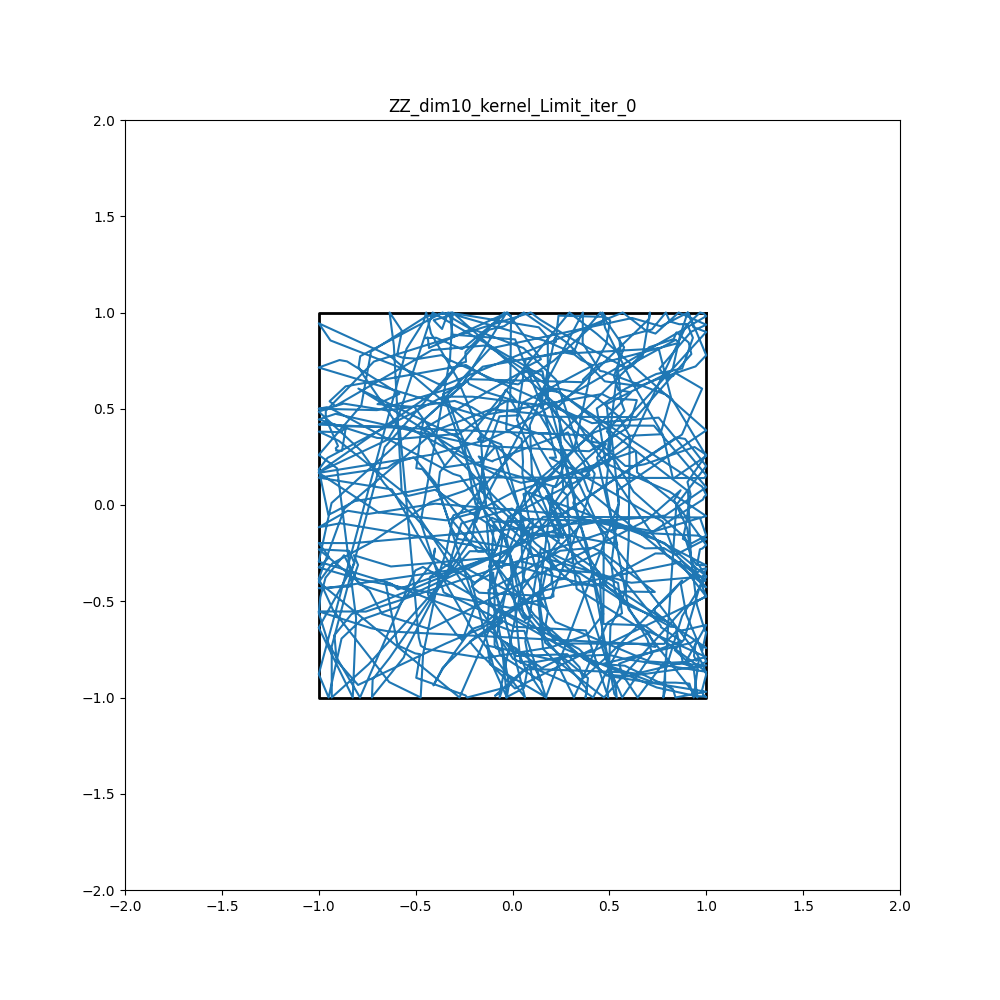} & \includegraphics[width=3.5cm]{ZZ_dim100_kernel_Limit_iter_0.png} \\
    \hline
    \rotatebox{90}{ZZ Metropolis 1} & \includegraphics[width=3.5cm]{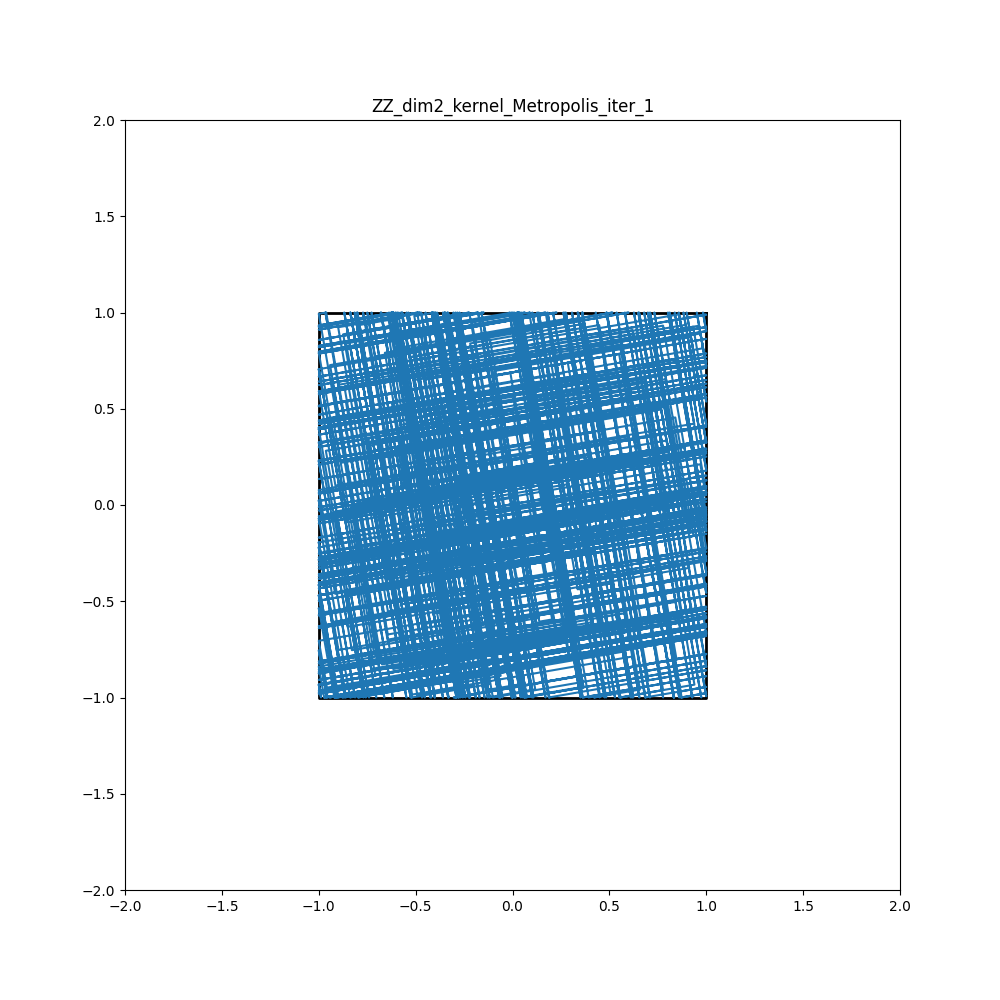}&
    \includegraphics[width=3.5cm]{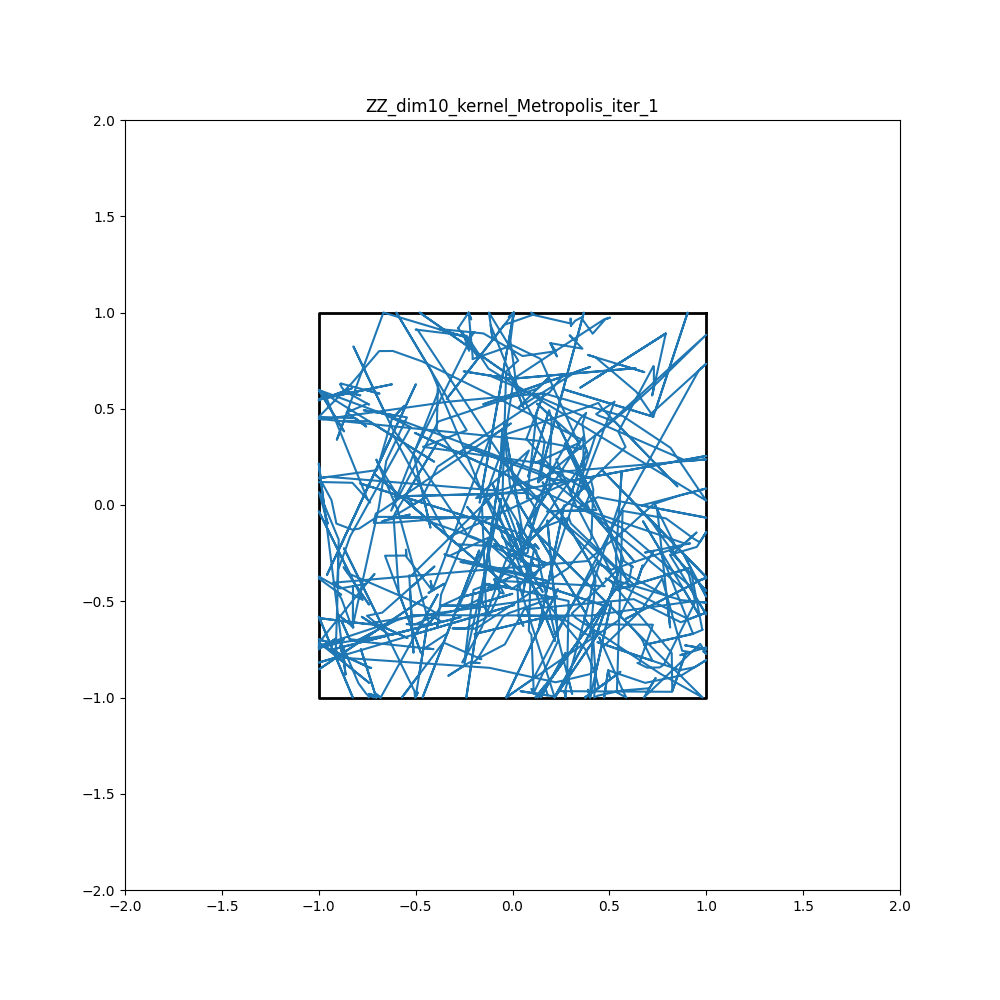} & \includegraphics[width=3.5cm]{ZZ_dim100_kernel_Metropolis_iter_1.png} \\
    \hline
    \rotatebox{90}{ZZ Metropolis 100} & \includegraphics[width=3.5cm]{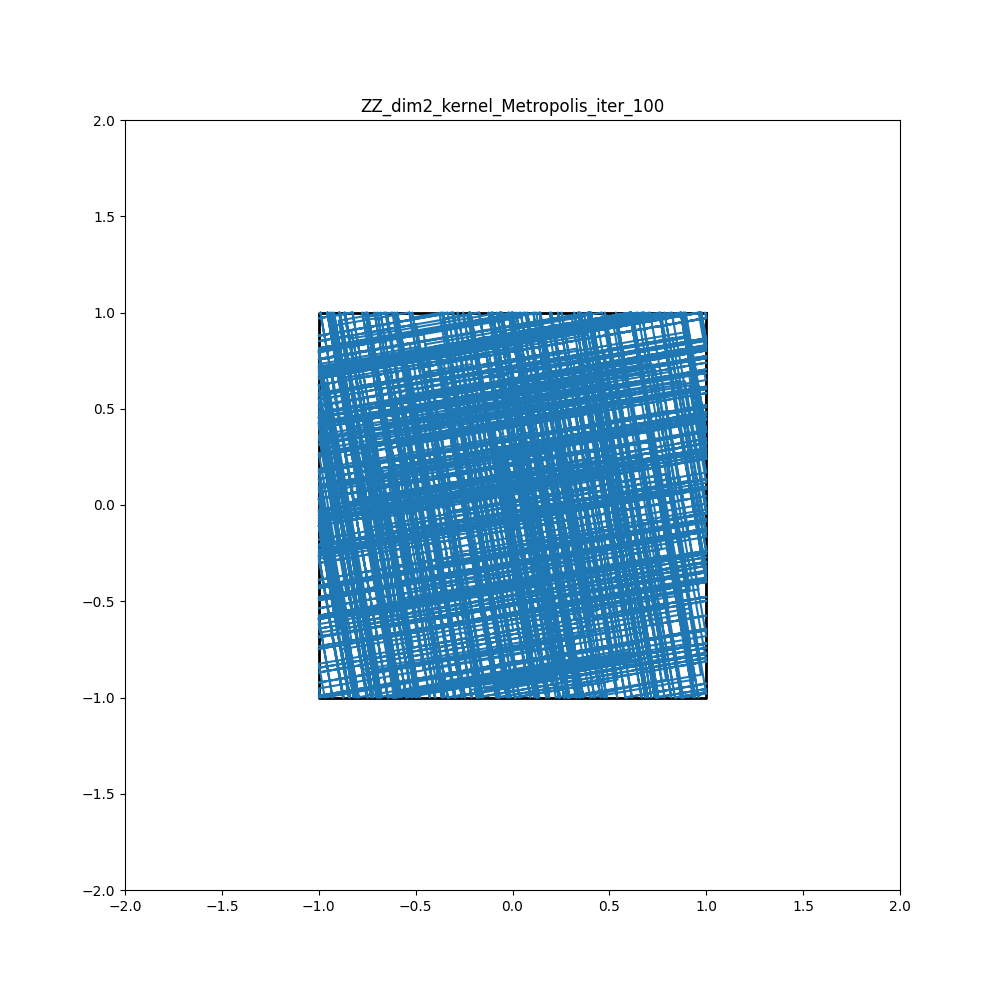}&
    \includegraphics[width=3.5cm]{ZZ_dim10_kernel_Metropolis_iter_1.png} & \includegraphics[width=3.5cm]{ZZ_dim100_kernel_Metropolis_iter_100.png} \\  
    \hline
    \end{tabular}
    \caption{Example trajectories for the Zig-Zag Sampler for simulating from a $d$-dimensional Gaussian distribution restricted to a cube, for $d=$2, 10, 100; and for different transitions on the boundary. For $d=10$, 100 we show the dynamics for the first two coordinates only. The different transitions correspond to the limiting behaviour, $Q_{\ZZ}$ of Section~\ref{sec:ZZlim} (top); using a single Metropolis-Hastings step to sample from $l_x$ (middle); and using 100 Metropolis-Hastings steps to sample from $l_x$ (bottom).}
    \label{fig:trajectories-ZZ}
\end{figure}

\begin{figure}
    \centering
    \begin{tabular}{|c | c | c | c|}
    \hline
    & dim = 2 & dim = 10 & dim = 100 \\
    \hline
    \rotatebox{90}{ZZ Limit} & \includegraphics[width=3.5cm]{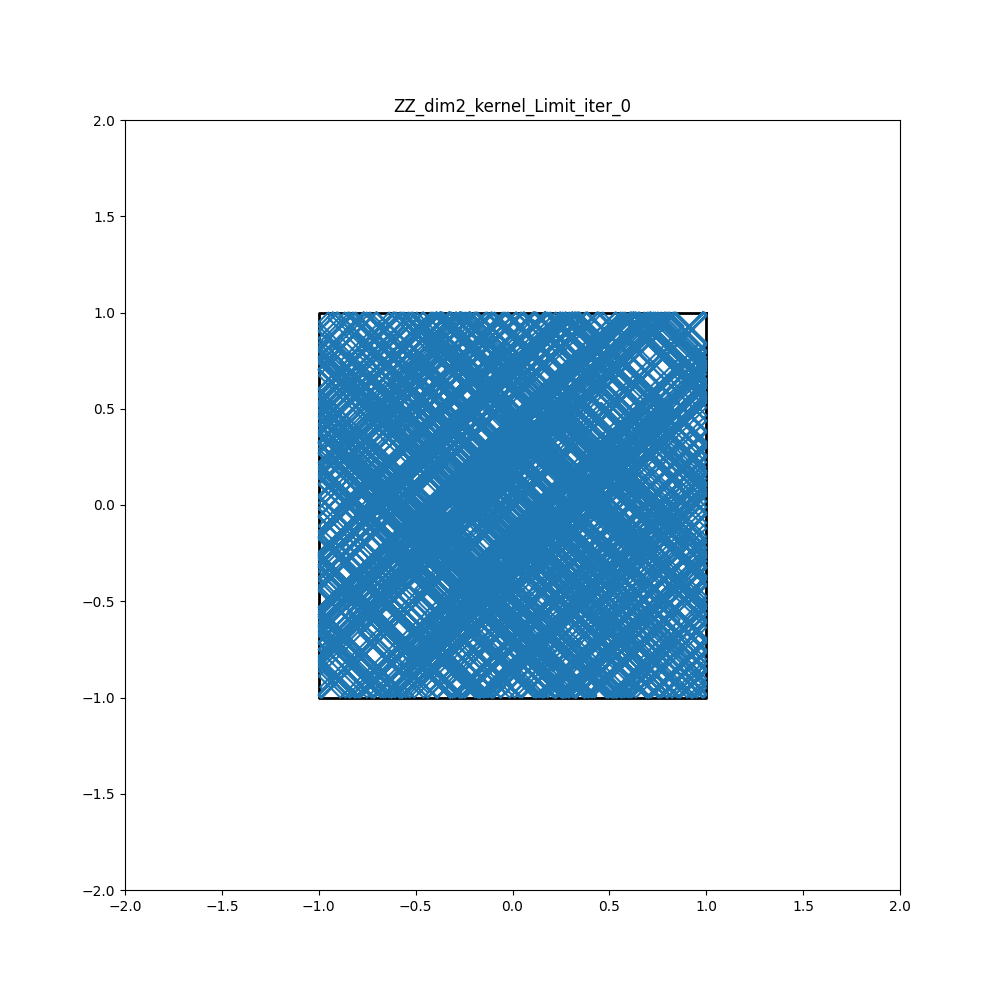}&
    \includegraphics[width=3.5cm]{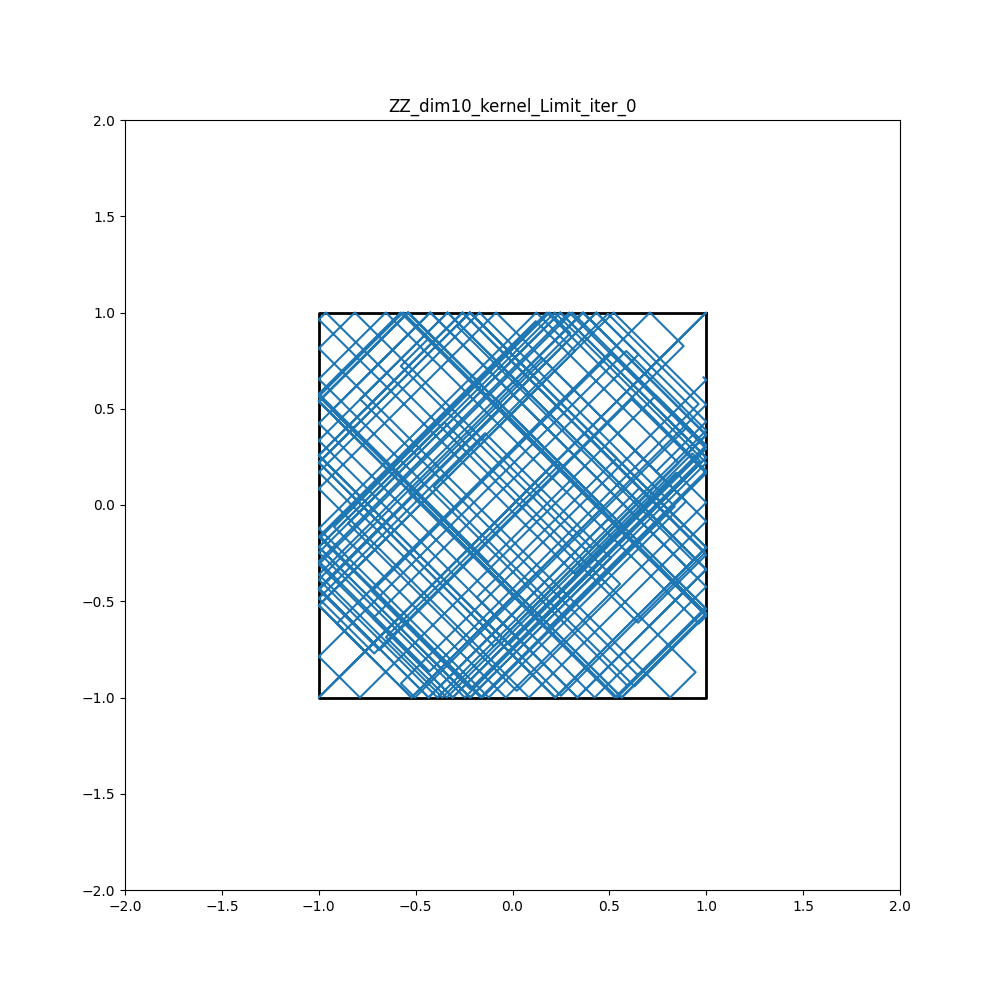} & \includegraphics[width=3.5cm]{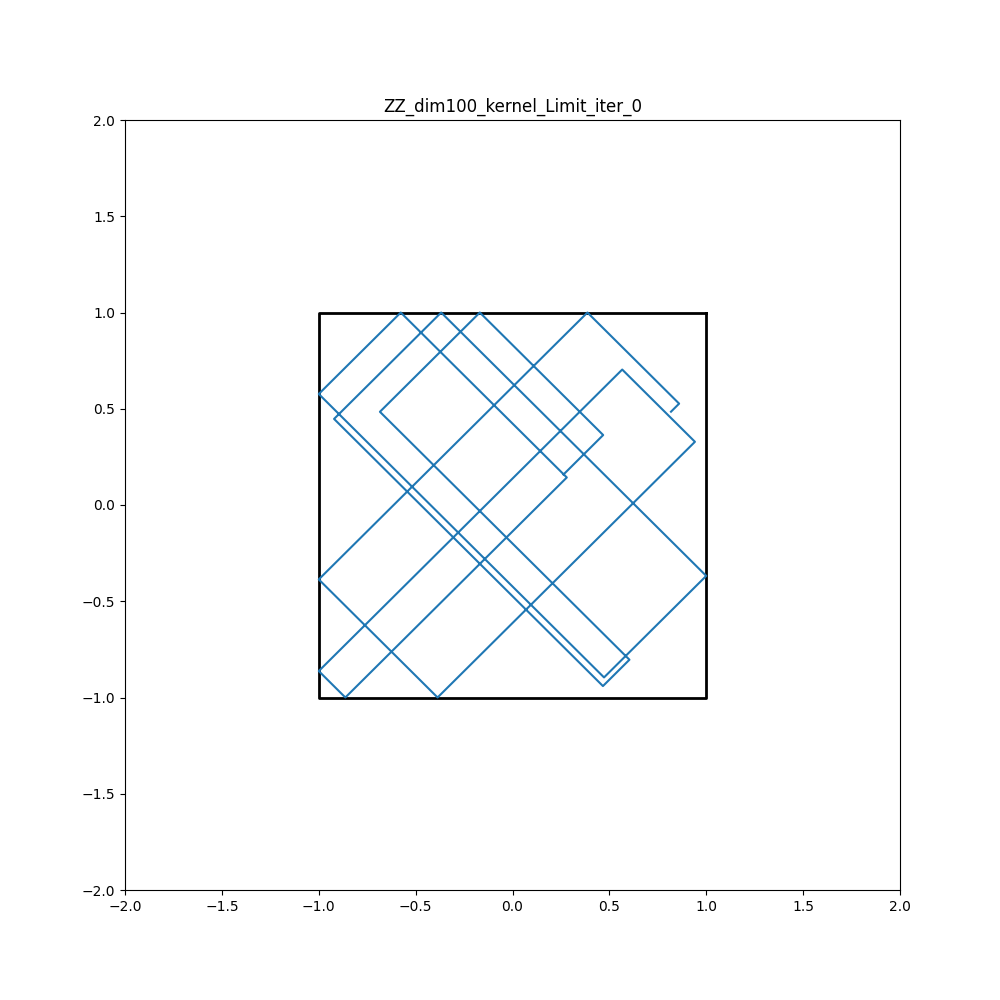} \\
    \hline
    \rotatebox{90}{ZZ Metropolis 1} & \includegraphics[width=3.5cm]{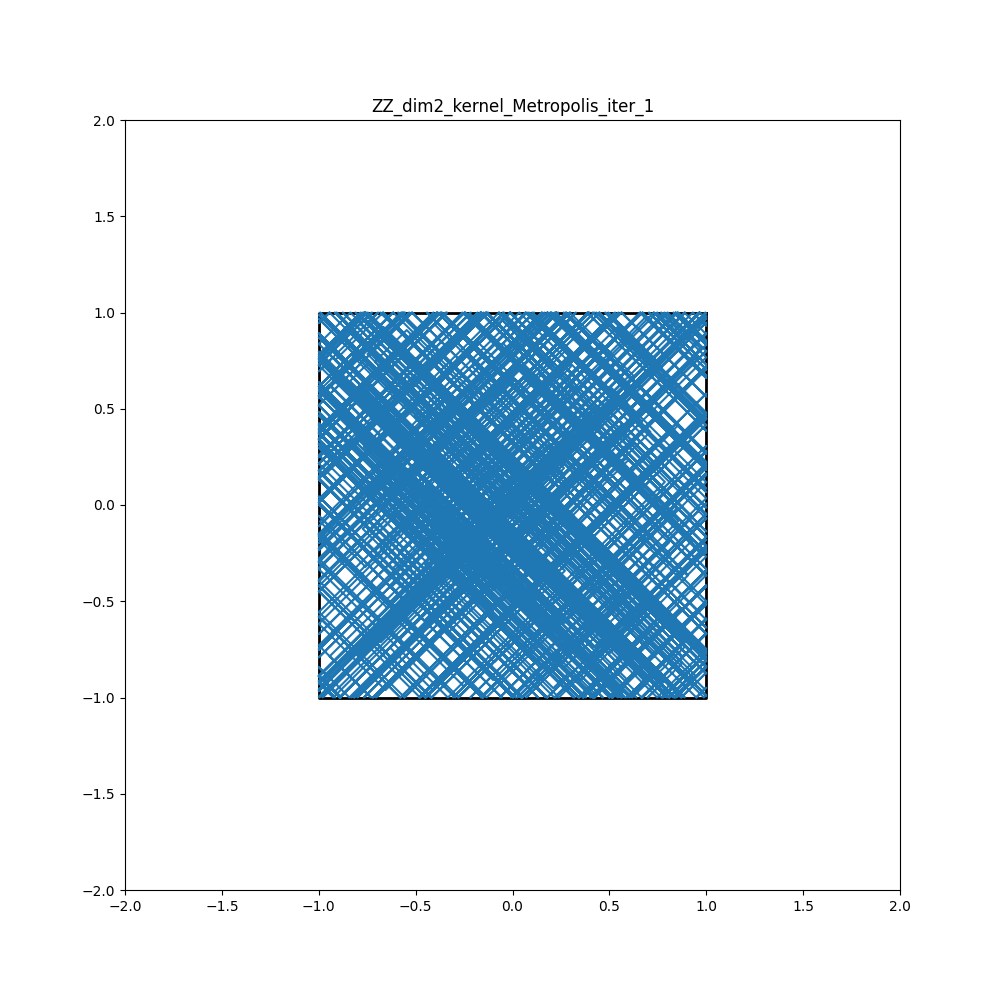}&
    \includegraphics[width=3.5cm]{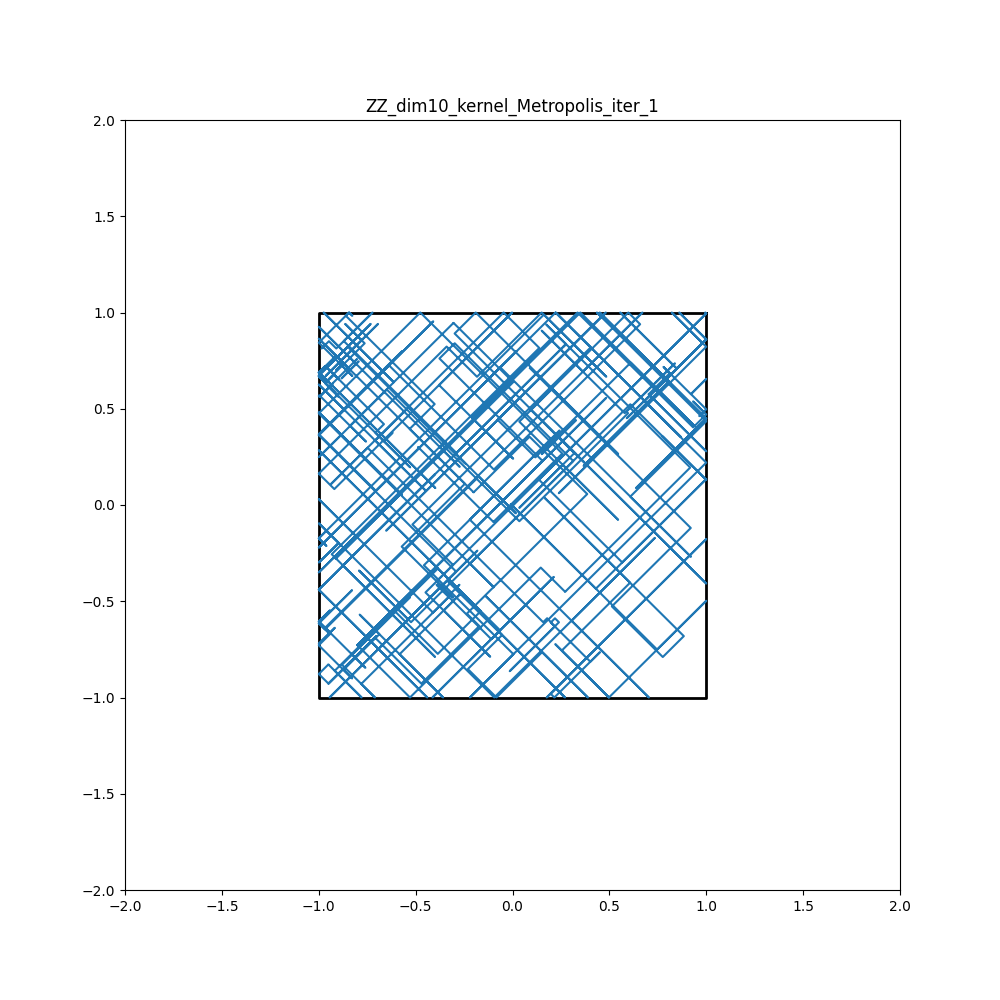} & \includegraphics[width=3.5cm]{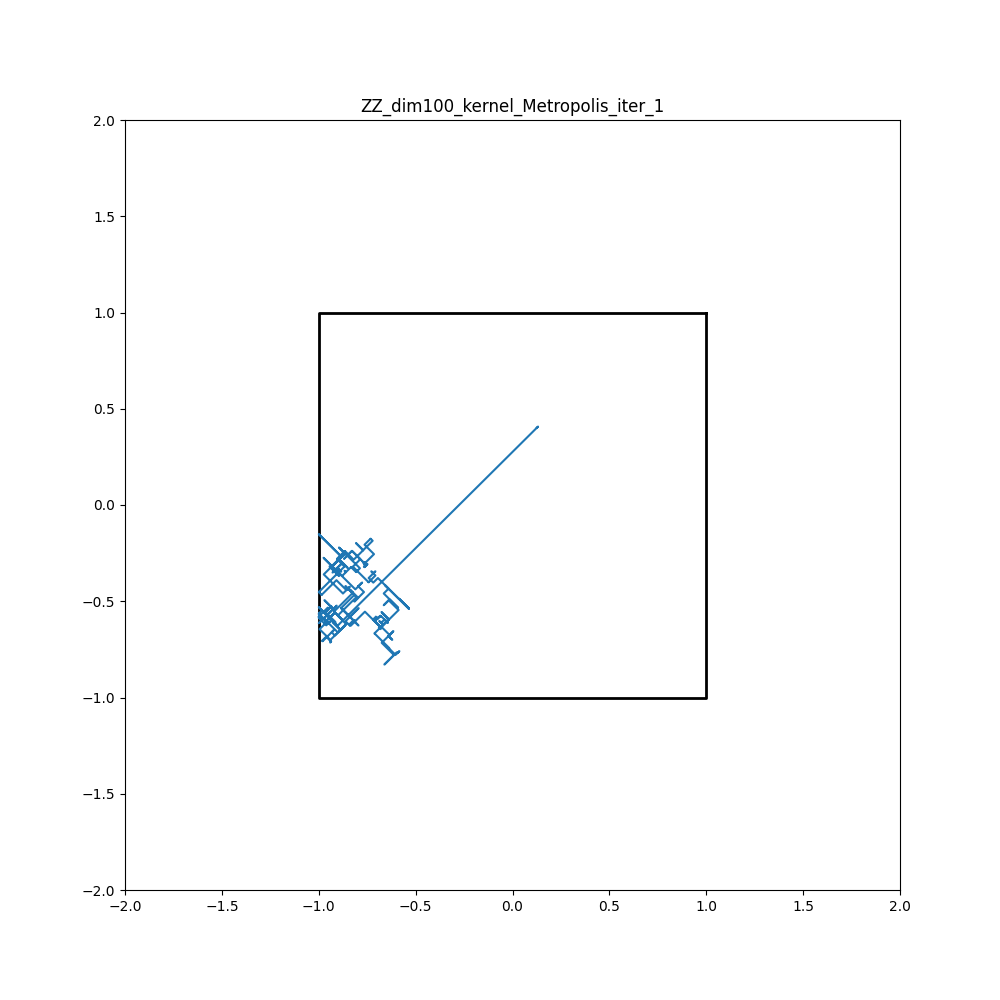} \\
    \hline
    \rotatebox{90}{ZZ Metropolis 100} & \includegraphics[width=3.5cm]{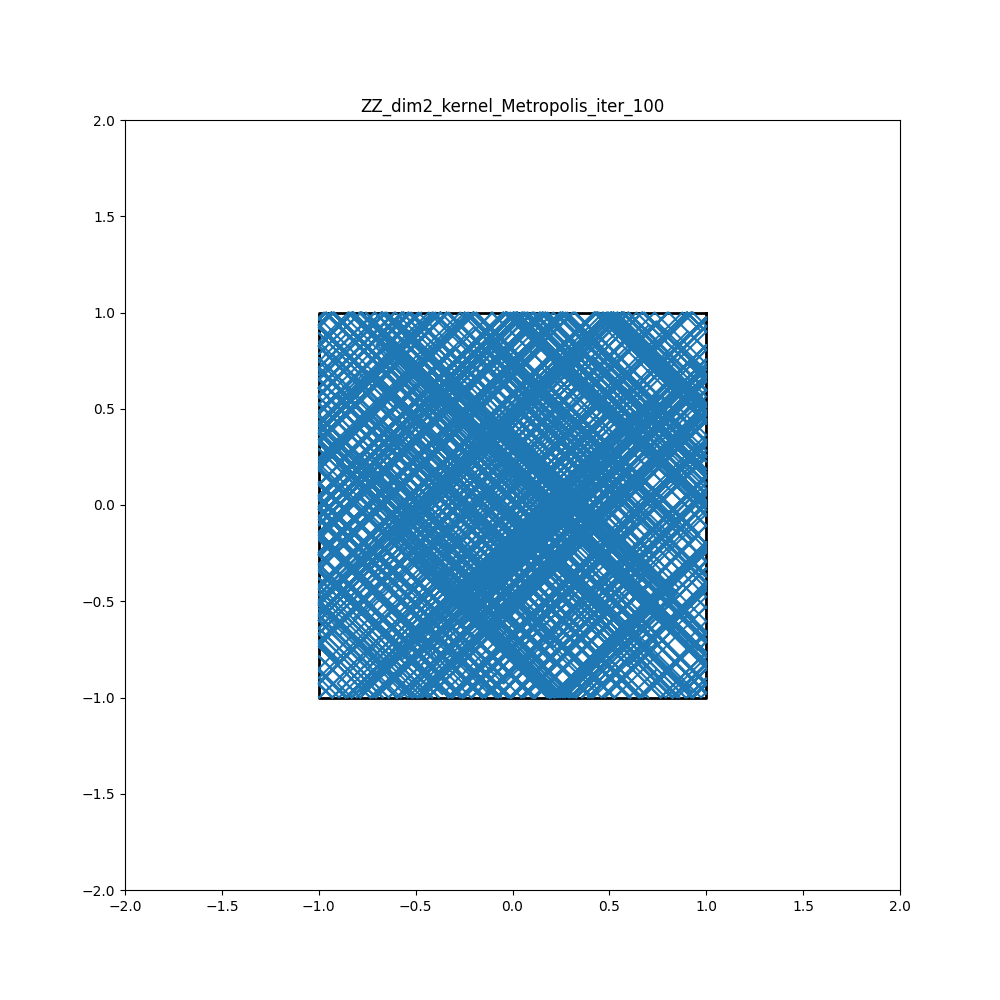}&
    \includegraphics[width=3.5cm]{pZZ_dim10_kernel_Metropolis_iter_1.png} & \includegraphics[width=3.5cm]{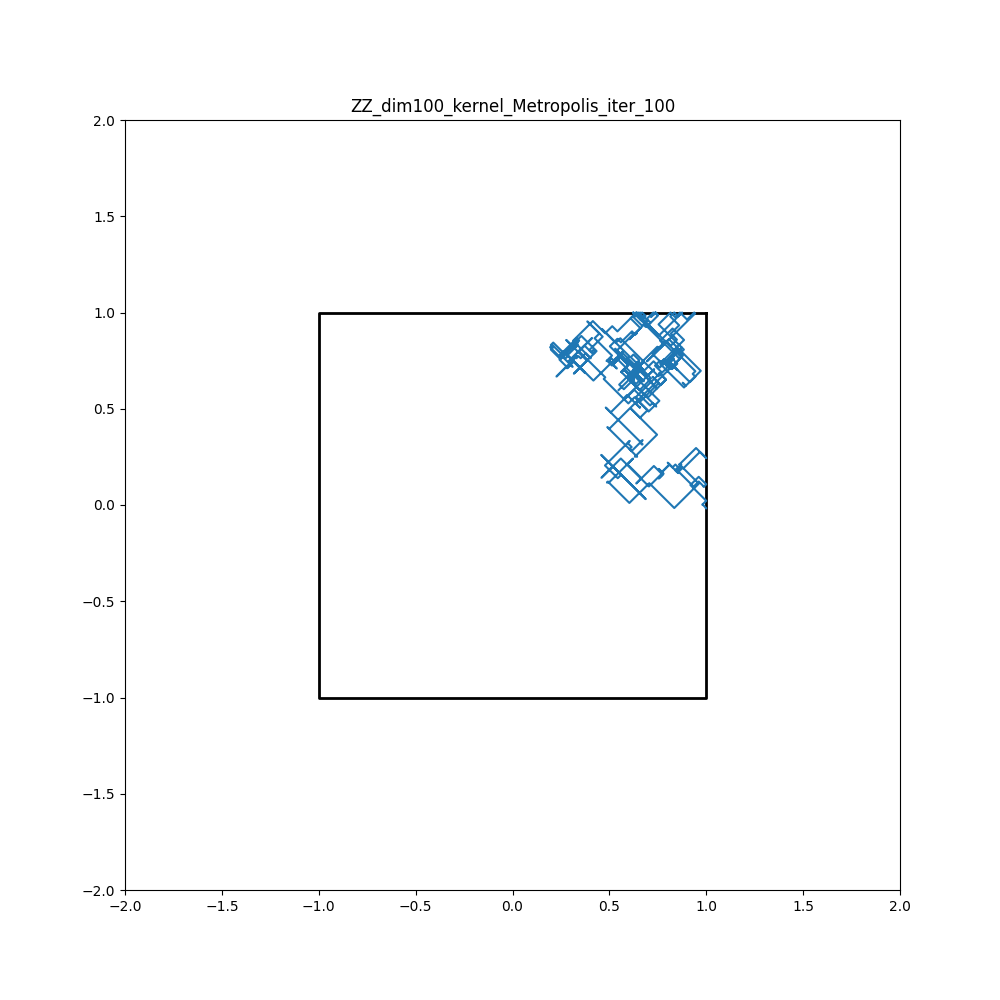} \\  
    \hline
    \end{tabular}
    \caption{Example trajectories for the Zig-Zag Sampler for simulating from a $d$-dimensional Gaussian distribution restricted to a cube, for $d=$2, 10, 100; and for different transitions on the boundary -- using the canonical basis. For $d=10$, 100 we show the dynamics for the first two coordinates only. The different transitions correspond to the limiting behaviour, $Q_{\ZZ}$ of Section~\ref{sec:ZZlim} (top); using a single Metropolis-Hastings step to sample from $l_x$ (middle); and using 100 Metropolis-Hastings steps to sample from $l_x$ (bottom).}
    \label{fig:trajectories-ZZ2}
\end{figure}
\end{document}